\documentclass{amsart}
\usepackage{amsmath,amssymb,graphicx,tikz}
\usepackage{placeins}

\newcommand{\ab}{\bar{\alpha}}
\newcommand{\bb}{\bar{\beta}}
\newcommand{\C}{\mathcal C}
\renewcommand{\O}{\mathcal O}
\newcommand{\F}{{\mathbf F}}
\newcommand{\Fp}{\F_p}
\newcommand{\Fq}{\F_q}
\newcommand{\Fpbar}{\bar{\F}_p}
\newcommand{\Fqbar}{\bar{\F}_q}

\newcommand{\GL}{\mathrm{GL}}
\newcommand{\Z}{\mathbf Z}
\newcommand{\Q}{\mathbf Q}
\newcommand{\Zp}{\Z_p}
\newcommand{\Qp}{\Q_p}
\newcommand{\Zpn}{\Z/p^n}
\newcommand{\Zpns}{(\Zpn)^\times}
\newcommand{\LCM}{\mathrm{LCM}}
\newcommand{\oa}{o_\alpha}

\newcommand{\oab}{o_{\ab}}
\newcommand{\obb}{o_{\bb}}
\newcommand{\oap}{\oa'}
\newcommand{\oabp}{\oab'}

\renewcommand{\P}{\mathbf P}
\newcommand{\lt}{\tilde{\lambda}}
\newcommand{\N}{{\mathbf N}}
\newcommand{\Qbar}{\bar{\Q}}
\newcommand{\ZN}{\Z/N}
\newcommand{\ZNs}{(\Z/N)^\times}
\newcommand{\vep}{\varepsilon}
\renewcommand{\oe}{o_\vep}

\newcommand{\QQ}{\mathfrak Q}
\newcommand{\q}{\mathfrak q}

\newcommand{\Ql}{\Q_\ell}
\newcommand{\Zl}{\Z_\ell}

\newcommand{\voa}{v_{\alpha}}
\newcommand{\vob}{v_{\beta}}

\newcommand{\rkm}{r_k^\mu}
\newcommand{\skm}{s_k^\mu}
\newcommand{\tkm}{t_k^\mu}
\newcommand{\tom}{t_{\oab}^\mu}
\newcommand{\zkm}{z^\mu}

\newcommand{\inj}{\hookrightarrow}
\newcommand{\tors}{\mathrm{tors}}
\newcommand{\p}{\mathfrak p}

\newcommand{\dcp}{\mathrm{DCT}}
\newcommand{\dcpm}{\mathrm{DCT}^{\text{mult}}}
\newcommand{\dcpo}{\mathrm{DCT}^{\text{ord}}}
\DeclareMathOperator{\Gal}{Gal}
\newcommand{\Frob}{\mathrm{Frob}}
\newcommand{\St}{\mathrm{St}}
\DeclareMathOperator{\End}{End}
\DeclareMathOperator{\disc}{disc}

\newcommand{\mat}[4]{\left( \begin{array}{cc} #1 & #2 \\ #3 & #4 \end{array} \right)}
\renewcommand{\vec}[2]{\left( \begin{array}{c} #1 \\ #2 \end{array} \right)}

\newtheorem{thm}{Theorem}
\numberwithin{thm}{section}
\newtheorem{lemma}[thm]{Lemma}
\newtheorem{prop}[thm]{Proposition}
\newtheorem{cor}[thm]{Corollary}

\theoremstyle{definition}

\newtheorem{rmk}[thm]{Remark}
\newtheorem{example}[thm]{Example}

\title{Dedekind zeta functions of non-Galois torsion fields of elliptic curves}
\author{Robert Pollack and Tom Weston}
\dedicatory{To Lucky the yellow labrador (2010-2026), who did everything in his power to prevent this paper from being completed}

\begin{document}

\maketitle

\section{Introduction}

Let $E$ be an elliptic curve over $\Q$.  Fix a point $P \in E(\Qbar)$ of odd order $N$
and let $K$ be the number field generated by the coordinates of $P$.  Under the assumption that the full
$N$-torsion field $\Q(E[N])$ has maximal Galois group, we compute the factorization of primes in the ring of integers $\O_K$.  We first give all possible factorization types (i.e., how many primes of which residual degrees occur in the factorization) for unramified primes and determine how often each occurs; see
Section~\ref{sec:dist}.  This depends only on $N$ and not on $E$.

When $E$ is semistable and has good ordinary reduction (without a companion form) at all $p$ dividing $N$, we consider also ramified primes and give an algorithm to count the number of ideals of $\O_K$ of any norm;
see Section~\ref{s:dzf}.  This allows one to compute coefficients of the Dedekind zeta function of $K$.  For example, when
$E=X_0(11)$ and $N=63$, then $K$ is a number field of degree $3456 $ and the first 60 nonzero coefficients of $\zeta_K(s) = \sum_{n=1}^{\infty} c_n n^{-s}$ are
given in Table~\ref{t:dzf}.

\begin{table} 
\begin{tabular}{|c|c||c|c||c|c||c|c|} \hline
$n$ & $c_n$ & $n$ & $c_n$ & $n$ & $c_n$ & $n$ & $c_n$ \\ \hline
1	&	1	&	1331	&	66552	&	5929	&	453492	&	9289	&	648	\\
7	&	18	&	1607	&	36	&	6241	&	18	&	9317	&	1197936	\\
11	&	72	&	2191	&	648	&	6269	&	36	&	9349	&	108	\\
49	&	171	&	2203	&	36	&	6311	&	36	&	9371	&	36	\\
77	&	1296	&	2401	&	6309	&	6337	&	36	&	10067	&	36	\\
121	&	2652	&	3167	&	36	&	6359	&	36	&	10069	&	36	\\
313	&	36	&	3181	&	36	&	6561	&	18	&	10813	&	2592	\\
343	&	1158	&	3443	&	2592	&	6677	&	2592	&	10957	&	36	\\
539	&	12312	&	3773	&	83376	&	6743	&	2592	&	11071	&	36	\\
607	&	36	&	3853	&	36	&	6881	&	648	&	11249	&	648	\\
613	&	36	&	4249	&	648	&	6959	&	36	&	11299	&	108	\\
847	&	47736	&	4291	&	648	&	7517	&	36	&	11437	&	36	\\
983	&	36	&	4643	&	36	&	7723	&	36	&	11449	&	216	\\
1217	&	36	&	4651	&	36	&	8519	&	648	&	11897	&	36	\\
1327	&	36	&	4801	&	108	&	9283	&	36	&	12491	&	36	\\ \hline
\end{tabular}
\vspace{0.5cm}
\caption{Coefficients of $\zeta_K(s)$ for $X_0(11)$ and $N=63$} \label{t:dzf}
\end{table}

A second application of this work is to analyze basic properties of a family of homomorphisms
$$E(\Fp) \to E(\Fqbar)$$
which we define for primes of good reduction $p,q$ such that $p$ is not anomalous.  Under the assumption that $E(\Fp)$ is cyclic of odd order, our results allow
us to determine, among other things, the field of definition of the image of $E(\Fp)$ in $E(\Fqbar)$; that is, the (various) minimal $d$ such that the image of 
$E(\Fp)$ lies in $E(\F_{q^d})$.

Our approach relies on Proposition~\ref{prop:pdc} relating prime factorizations in non-Galois extensions to double cosets.  The bulk of the paper is a determination of the double coset decompositions
$$\Gamma \backslash \Gal(\Q(E[N])/\Q) / D$$
as $D$ various over all possible cyclic subgroups; here $\Gamma$ is the stabilizer of a single point $P$ of order $N$.
The complete answer to this question in the unramified case requires the theory of arithmetic functions as well as consideration of the Newton polygons of a certain family of determinant polynomials which arise in the calculations.  We then consider certain ramified cases in order to enable the calculation of the Dedekind zeta function.

We would like to thank Santiago Arango-Piñeros, Alina Bilialova and Farshid Hajir for helpful conversations related to this work.

\section{Prime decomposition in non-Galois extensions}

\FloatBarrier

We begin with a result on prime ideals in non-Galois extensions which, while known (but perhaps not \emph{well} known) we were unable to find in the literature in this form; see \cite{wood} for a brief discussion of this result from a different point of view as well as lamentations on the state of the literature.  (At the conclusion of the preparation of this paper we became aware that this result is proved in \cite[Section 4, Lemma 4]{HajirWong}.  We include our proof as well as it gives a somewhat different perspective.)

Let $L/E$ be a Galois extension of number fields and let $K$ denote an intermediate field.
Set $G = \Gal(L/E)$ and $\Gamma=\Gal(L/K)$.  Fix a prime ideal $\QQ$ of $L$ lying over a prime $\q$ of $E$.  Let $D$ (resp.\ $I$) denote the decomposition (resp.\ inertia) group of $\QQ$ in $G$.  We write $e(\QQ/\q)$ and $f(\QQ/\q)$ for the ramification degree and residual degree, with the obvious corresponding notation to represent the same quantities for primes in other extensions.

\begin{table}
\centerline{
\begin{tikzpicture}

    \node (Q1) at (0,0) {$E$};
    \node (Q2) at (0,1) {$K$};
    \node (Q3) at (0,2) {$L$};
    \node (Q4) at (1,0) {$\q$};
    \node (Q5) at (1,2) {$\QQ$};

    \draw (Q1)--(Q2);
    \draw (Q2) to node[right] {$\Gamma$} (Q3);
    \draw (Q1) -- (-0.5,0) to node[left]{$I \subseteq D \subseteq G$} (-0.5,2) -- (Q3);

    \end{tikzpicture}}
\caption{Set up of Proposition~\ref{prop:pdc}} \label{f:field1}
\end{table}

\FloatBarrier

\begin{prop} \label{prop:pdc}
There is a bijection
\begin{align} \label{eq:dcgal}
\Gamma \backslash G / D &\to \{ \text{primes of $K$ over $\q$} \} \\
\Gamma g D &\mapsto g \QQ \cap K. \nonumber
\end{align}
For any $g \in G$, we have
\begin{align*}
e(g \QQ \cap K/\q) &= \frac{\# \Gamma g I}{\# \Gamma} \\
e(g \QQ \cap K/\q)f(g \QQ \cap K/\q) &= \frac{\# \Gamma g D}{\# \Gamma}
\end{align*}
\end{prop}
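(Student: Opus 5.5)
The plan is to use the transitivity of $G$ on the primes of $L$ above $\q$, together with the standard facts for the Galois extension $L/K$ with group $\Gamma$.

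\emph{Bijection.} The map $gD \mapsto g\QQ$ identifies $G/D$ with the set of primes of $L$ over $\q$: $G$ acts transitively on these primes and $D$ is the stabilizer of $\QQ$. Every prime $\p$ of $K$ over $\q$ has some prime of $L$ above it, so the map $\Gamma gD \mapsto g\QQ\cap K$ is surjective. It is well defined because $\gamma g\QQ \cap K = \gamma(g\QQ\cap K) = g\QQ\cap K$ for $\gamma\in\Gamma$, since $\Gamma$ fixes $K$ pointwise. For injectivity, suppose $g\QQ\cap K = g'\QQ\cap K = \p$. Then $g\QQ$ and $g'\QQ$ both lie over $\p$ in the Galois extension $L/K$, so by transitivity of $\Gamma$ there is $\gamma\in\Gamma$ with $g'\QQ=\gamma g\QQ$. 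Hence $g^{-1}\gamma^{-1}g'\in D$, that is, $g'\in\Gamma gD$.

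\emph{Degree formulas.} Set $\QQ'=g\QQ$ and $\p=\QQ'\cap K$. The decomposition and inertia groups of $\QQ'$ in $G$ are $gDg^{-1}$ and $gIg^{-1}$. In $L/K$ they are $\Gamma\cap gDg^{-1}$ and $\Gamma\cap gIg^{-1}$, because the inertia group of $\QQ'$ over $K$ consists of the elements of $\Gamma$ acting trivially on $\O_L/\QQ'$, and this is exactly $\Gamma\cap gIg^{-1}$. By multiplicativity in towers,
\[
e(\p/\q)=\frac{e(\QQ'/\q)}{e(\QQ'/\p)}=\frac{\#I}{\#(\Gamma\cap gIg^{-1})},
\qquad
e(\p/\q)f(\p/\q)=\frac{\#D}{\#(\Gamma\cap gDg^{-1})}.
\]
Now use the counting identity
\[
\#\Gamma gH=\#(\Gamma gHg^{-1})=\frac{\#\Gamma\cdot\#H}{\#(\Gamma\cap gHg^{-1})},
\]
valid for any subgroup $H$. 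The first equality holds because right multiplication by $g^{-1}$ is a bijection $\Gamma gH\to\Gamma(gHg^{-1})$. The second is the usual product-set formula $\#(AB)=\#A\,\#B/\#(A\cap B)$ for subgroups $A,B$. Dividing by $\#\Gamma$ and taking $H=I$ or $H=D$ gives both formulas.

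\emph{Main obstacle.} No step is really hard. The point needing the most care is identifying the inertia group of $\QQ'$ over $K$ with $\Gamma\cap gIg^{-1}$, as opposed to merely some subgroup of it. This follows directly from the definition of inertia as the kernel of the action on the residue field, which is the same residue field $\O_L/\QQ'$ whether we work over $\q$ or over $\p$. Another point is that $\Gamma gI$ need not be a double coset when $I$ is not normalized appropriately. This causes no trouble, because the formula only uses the set $\Gamma gI$ and the counting identity above does not require a double coset.
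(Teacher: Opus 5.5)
Your proof is correct. The bijection argument is the same as the paper's: well-definedness because $\Gamma$ fixes $K$, surjectivity from transitivity of $G$, and injectivity from transitivity of $\Gamma$ on the primes of $L$ over a fixed prime of $K$.

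For the numerical formulas you take a different and more direct route. The paper works with auxiliary fields. It takes the decomposition field $L^{D'}$ of $g\QQ$, where $D'=gDg^{-1}$, and notes that the prime of $L^{D'}$ below $g\QQ$ has $e=f=1$ over $\q$. It then uses the compositum bound $ef(L^{D'}K/K)\le ef(L^{D'}/L^{D'}\cap K)=1$ to get $ef(K/E)=ef(L^{D'}K/L^{D'})$. It reads this off as $\#D'/\#(D'\cap\Gamma)$, because $g\QQ$ is the only prime of $L$ above its restriction to $L^{D'}$. The inertia formula is left to ``a similar argument'' with the inertia field.

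You instead identify the decomposition and inertia groups of $g\QQ$ in $L/K$ directly as $\Gamma\cap gDg^{-1}$ and $\Gamma\cap gIg^{-1}$. You then divide, using multiplicativity of $e$ and $f$ in the tower $E\subseteq K\subseteq L$. Your version needs only the basic facts $\#(\text{decomposition group})=ef$ and $\#(\text{inertia group})=e$, applied to the two Galois extensions $L/E$ and $L/K$. It avoids the compositum inequality and handles $e$ and $ef$ uniformly through one counting identity. The paper's version is more field-theoretic: it exhibits the local degree of $K$ at $g\QQ\cap K$ as the degree of $L^{D'}K$ over the decomposition field, which is closer to the completion picture. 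Both end at the same index $[D':D'\cap\Gamma]=\#\Gamma gD/\#\Gamma$.

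One small remark: your worry that $\Gamma gI$ ``need not be a double coset'' is unfounded, since it is by definition a $(\Gamma,I)$-double coset. As you say, nothing depends on this.
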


\FloatBarrier

\begin{proof}
If $g' \in \Gamma g D$, then we can write $g' = \gamma g d$ with $\gamma \in \Gamma$
and $d \in D$.  Thus
\begin{align*}
g'\QQ \cap K &= \gamma g d \QQ \cap K \\
&= \gamma g\QQ \cap K \\
&= \gamma(g \QQ \cap K) \\
&= g \QQ \cap K
\end{align*}
since $\gamma$ fixes $K$.  Thus the map (\ref{eq:dcgal}) is well defined.
The surjectivity is clear since $G$ acts transitively on the primes over
$\q$.  For injectivity, suppose that $g\QQ \cap K = g' \QQ \cap K$.  Since $L/K$ is Galois, 
there exists
$\gamma \in \Gamma$ such that $g \QQ =\gamma g' \QQ$.  Thus $g^{-1}\gamma g' \in D$ and
$$g' = \gamma^{-1} g (g^{-1} \gamma g')$$
shows that $g' \in \Gamma g D$.

For the two numerical formulas,
fix $g \in G$ and let $D' = gDg^{-1}$ be the decomposition group of $g\QQ$.  For the remainder of the proof, for any intermediate extension $E \subseteq M_1 \subseteq M_2 \subseteq L$, we define
$$ef(M_2/M_1) = e(g\QQ \cap M_2/g\QQ \cap M_1)f(g\QQ \cap M_2/g\QQ \cap M_1).$$

\begin{table}
\centerline{
\begin{tikzpicture}

    \node (Q1) at (0,0) {$E$};
    \node (Q2) at (0,1) {$L^{D'} \cap K$};
    \node (Q3) at (-2,2) {$L^{D'}$};
    \node (Q4) at (2,2) {$K$};
    \node (Q5) at (0,3) {$L^{D'} K$};
    \node (Q6) at (0,5) {$L$};

    \draw (Q1)--(Q2);
    \draw (Q2)--(Q3);
    \draw (Q2)--(Q4);
    \draw (Q3)--(Q5);
    \draw (Q4)--(Q5);
    \draw (Q5) to node[right] {$D' \cap \Gamma$} (Q6); 
    \draw (Q6) to node[right] {$\Gamma$} (Q4);
    \draw (Q6) to node[left] {$D'$} (Q3);

    \end{tikzpicture}}
\caption{Intermediate fields $L/E$} \label{f:field2}
\end{table}

Consider the field diagram in Table~\ref{f:field2}.
The prime $\q$ is totally split in the decomposition field $L^{D'}$.  In particular, we must have
$ef(L^{D'} \cap K/E)=1$, so that
$$ef(K/E) = ef(K/L^{D'} \cap K).$$
As also
$$ef(L^{D'}K/K) \leq ef(L^{D'}/L^{D'} \cap K) = 1$$
and $ef$ is multiplicative in towers, we find that
$$ef(K/E) = ef(L^{D'}K/L^{D'}).$$
But $ef(L/L^{D'})=\#D'$, so that finally
\begin{align*}
ef(K/E) &= \frac{\#D'}{\#D' \cap \Gamma} \\
&= [D' : D' \cap \Gamma] \\
&= [D : D \cap g^{-1}\Gamma g] \\
&= \frac{\# \Gamma g D}{\# \Gamma}
\end{align*}
by a standard formula for orders of double cosets.  A similar argument with $I$ replacing $D$ gives the formula purely for inertia.
\end{proof}

\section{Double coset types} \label{s:dcp}

Fix a finite group $G$ and a subgroup $\Gamma$.  Let $D$ denote another subgroup of $G$; for the purpose of this discussion we imagine $\Gamma$ being fixed and $D$ varying.  We say that $D$ has {\it double coset type}
$$a_1 \times b_1 + \cdots + a_m \times b_m$$
if $G$ decomposes as
a disjoint union of precisely $a_i$ double cosets $D g \Gamma$ of each order $b_i\#\Gamma$ and no other double cosets.  (We do for convenience allow repeated $b_i$, in which case we mean that the number of cosets of order $b_i \#\Gamma$ is the sum of the corresponding coefficients $a_i$.)

We will also have need of the following augmented version.  Let $I \subseteq D$ be subgroups.  
We say that the pair $(D,I)$ has {\it double coset type}
$$a_1 \times (b_1,c_1) + \cdots + a_m \times (b_m,c_m)$$
if there are $a_i$ double cosets $D g \Gamma$ of order $b_i\#\Gamma$
and each of these double cosets is a union of $\frac{b_i}{c_i}$ double cosets $I g \Gamma$ of order $c_i \#\Gamma$.
Note that double coset types are the specialization of double coset types of a pair in the case that $I=\{1\}$.

\begin{lemma} \label{lemma:factor}
In the setting of Proposition~\ref{prop:pdc}, let
$$a_1 \times (b_1,c_1) + \cdots + a_m \times (b_m,c_m)$$
be the double coset type of the pair $(D,I)$.  Then
the prime $\q$ factors in $\O_L$ as a product of $a_1+\cdots+a_m$ primes, of which $a_i$ have ramification index $c_i$ and residual degree $\frac{b_i}{c_i}$.
\end{lemma}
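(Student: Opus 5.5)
The plan is to read off the statement directly from Proposition~\ref{prop:pdc}. The only mismatch is the side on which the subgroups act: the proposition uses double cosets $\Gamma g D$, while double coset types are defined with $D g \Gamma$. The primes in question are the primes $g\QQ \cap K$ over $\q$, which the proposition parametrizes.

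\textbf{Step 1: change sides.} Inversion $x \mapsto x^{-1}$ is a bijection of $G$. It carries the double coset $D g \Gamma$ onto $\Gamma g^{-1} D$, and likewise $I g \Gamma$ onto $\Gamma g^{-1} I$. It preserves cardinalities and containment, so a decomposition of $D g\Gamma$ into $I$-double cosets becomes a decomposition of $\Gamma g^{-1} D$ into the sets $\Gamma h I$. Hence if $(D,I)$ has double coset type $\sum a_i \times (b_i,c_i)$, then $G$ is a disjoint union of exactly $a_1+\cdots+a_m$ double cosets $\Gamma h D$. Of these, $a_i$ have order $b_i \#\Gamma$, and each of them is a union of $b_i/c_i$ sets $\Gamma h I$ of order $c_i\#\Gamma$.

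\textbf{Step 2: apply Proposition~\ref{prop:pdc}.} By the bijection (\ref{eq:dcgal}), the primes over $\q$ arising as $h\QQ \cap K$ are in bijection with the double cosets $\Gamma h D$. This gives exactly $a_1+\cdots+a_m$ of them. For the prime attached to a coset of order $b_i\#\Gamma$, the proposition gives $ef = \#\Gamma h D/\#\Gamma = b_i$.

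\textbf{Step 3: identify $e$ and $f$.} Since $\Gamma h I \subseteq \Gamma h D$, all the $I$-double cosets inside $\Gamma h D$ have the form $\Gamma (h d) I$ with $d\in D$. These have a common size, namely $c_i\#\Gamma$ by Step 1. One can check this uniformity directly: right multiplication by $d$ carries $\Gamma h I$ to $\Gamma h d I = \Gamma h I d$, because $I$ is normal in $D$. The formula $e = \#\Gamma h I/\#\Gamma$ of Proposition~\ref{prop:pdc}, applied to any representative, therefore gives $e = c_i$, independent of the representative chosen. Dividing, $f = b_i/c_i$.

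\textbf{Main obstacle.} The only point needing care is Step 3. One must make sure that the value $c_i$ prescribed in the definition of the type is the one the proposition uses, i.e.\ that $e$ does not depend on which representative $h$ of $\Gamma h D$ is taken. I would handle this by the normality argument above, using $I \trianglelefteq D$. The rest of the argument is bookkeeping.
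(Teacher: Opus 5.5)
Your proposal is correct and matches the paper's proof: the lemma is a restatement of Proposition~\ref{prop:pdc}, with inversion $x\mapsto x^{-1}$ turning $D\backslash G/\Gamma$ into $\Gamma\backslash G/D$ (and likewise for $I$) while preserving cardinalities, and you rightly read the primes as those of $K$ (the ``$\O_L$'' in the statement should be $\O_K$). Your normality check in Step 3 is a harmless extra: independence of $e$ from the representative already follows because $e$ depends only on the prime $h\QQ\cap K$, and uniform size within each $D$-double coset is part of the hypothesis that the pair has a type.
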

\begin{proof}
This is simply a restatement of Proposition~\ref{prop:pdc} in the terminology of double coset types.  Note that there is a cardinality preserving bijection (given by inversion) between double cosets
$\Gamma \backslash G / D$ and $D \backslash G / \Gamma$ so that the opposite order in
Proposition~\ref{prop:pdc} does not affect the numerics.
\end{proof}

The next lemma is not at all surprising but is very useful for limiting the collection of subgroups $D$ which we must consider.

\begin{lemma} \label{l:myst}
If $D$ and $D'$ are conjugate subgroups of $G$, then $D$ and $D'$ have the same double coset type.
\end{lemma}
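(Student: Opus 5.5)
The plan is to exhibit an explicit bijection between the two sets of double cosets that preserves cardinalities. Write $D' = hDh^{-1}$ for some $h \in G$. Consider the map
$$\phi \colon D \backslash G / \Gamma \to D' \backslash G / \Gamma, \qquad D g \Gamma \mapsto D' h g \Gamma.$$
Left multiplication by $h$ is a bijection of $G$. Since $hDg\Gamma = hDh^{-1}\cdot hg\Gamma = D' hg \Gamma$, it carries each double coset $Dg\Gamma$ exactly onto the double coset $D'hg\Gamma$. This one observation already shows three things at once. The map $\phi$ is well defined, because it is described intrinsically as the image of a set under a bijection. It is a bijection, because its inverse is given in the same way by left multiplication by $h^{-1}$. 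And it preserves the cardinality of each double coset.

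Consequently, the number of double cosets of each order $b\,\#\Gamma$ is the same for $D$ and for $D'$. By the definition of double coset type, this means the two types coincide.

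If one also wants the version for pairs $(D,I)$ and $(D',I')$ with $I' = hIh^{-1}$, the same left translation by $h$ works. It sends each $Ig\Gamma$ bijectively onto $I'hg\Gamma$. Since a double coset $Dg\Gamma$ is partitioned into double cosets $Ix\Gamma$, its image is partitioned into the corresponding $I'hx\Gamma$, with the same number of pieces and the same sizes. So the augmented types agree as well.

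I expect no real obstacle here. The only point requiring care is to conjugate $D$ and not $\Gamma$: the subgroup $\Gamma$ is held fixed, and left translation by $h$ never touches the right-hand factor $\Gamma$, so that factor is preserved automatically.
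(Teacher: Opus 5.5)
Your proposal is correct and is essentially the paper's proof: left multiplication by the conjugating element $h$ carries each double coset $Dg\Gamma$ bijectively onto $D'hg\Gamma$, which preserves the number and sizes of the double cosets. Your extension to pairs $(D,I)$, conjugating both by the same $h$, is a harmless addition that the paper does not spell out.
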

\begin{proof}
Fix $g \in G$ such that $D'=gDg^{-1}$.  Then left multiplication by $g$ gives a bijection from double cosets of $D$ and $\Gamma$ to double cosets of $D'$ and $\Gamma$.  Indeed,
if $$C=\{ dc\gamma \hspace{0.5em} ; \hspace{0.5em} d \in D, \gamma \in \Gamma\}$$
is a double coset for $D$ containing $c \in G$, then 
$$gC = \{ gdg^{-1}gc\gamma \hspace{0.5em}; \hspace{0.5em} gdg^{-1} \in D', \gamma \in \Gamma\}$$
is a double coset for $D'$ containing $gc$.  Since multiplication by $g$ is itself bijective, the lemma follows.
\end{proof}

We will also need the following result on double coset types of products.  Let $\Gamma_1 \subseteq G_1$ and $\Gamma_2 \subseteq G_2$ be finite groups.
Let $I \subseteq D$ be subgroups of $G_1 \times G_2$.  Let $I_i \subseteq D_i$ be the projections of $I \subseteq D$ to $G_i$.  Note that we are not assuming that $D \cong D_1 \times D_2$.  We wish to relate double coset types for $D \backslash G_1 \times G_2 / \Gamma_1 \times \Gamma_2$ to those for
$D_1 \backslash G_1 / \Gamma_1$ and $D_2 \backslash G_2 /\Gamma_2$.

We define a product on double coset types by extending the formulas
$$(a \times b)_1 \otimes (a' \times b')_2 = \frac{aba'b'}{\LCM(b,b')} \times \LCM(b,b')_{1 \times 2}$$
$$\bigl(a \times (b,c)\bigr)_1 \otimes \bigl(a' \times (b',c')\bigr)_2 = \frac{aba'b'}{\LCM(b,b')} \times \bigl(\LCM(b,b'),\LCM(c,c')\bigr)_{1 \times 2}$$
by linearity.  Here the subscripts indicate which group $G_1$, $G_2$, $G_1 \times G_2$ the double coset types reference.

\begin{lemma} \label{lemma:dcpprod}
With the above notation, if the double coset type of $D_i$ is $(\sum a_{i,j} \times b_{i,j})_i$, then the double coset type of
$D$ is $\left( \sum a_{1,j} \times b_{1,j}\right)_1 \otimes \left(\sum a_{2,j} \times b_{2,j} \right)_2$.
The analogous formula holds for double coset types of pairs.
\end{lemma}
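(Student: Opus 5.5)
The plan is to translate double cosets into orbits and use the product structure of $G_1 \times G_2 / \Gamma_1 \times \Gamma_2$. I assume throughout that $D$ is cyclic, generated by $d=(d_1,d_2)$; this is the case needed in the applications. Some such hypothesis is necessary: for $D=D_1\times D_2$ the orbit sizes would multiply rather than take an $\LCM$, so the formula as stated cannot hold for arbitrary $D$. Then $D_i=\langle d_i\rangle$. Likewise $I\subseteq D$ is cyclic, generated by some $d^k$, and $I_i=\langle d_i^k\rangle$.

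First, double cosets $DgH$ for a subgroup $H\subseteq G$ correspond bijectively to $D$-orbits on the coset space $G/H$ under left multiplication. The orbit of $gH$ has size $\#(DgH)/\#H$. So a type $a\times b$ means there are $a$ orbits of size $b$. With $H=\Gamma_1\times\Gamma_2$ we have
$$(G_1\times G_2)/(\Gamma_1\times\Gamma_2)=G_1/\Gamma_1\times G_2/\Gamma_2,$$
with $D$ acting diagonally through $d=(d_1,d_2)$. Each $D$-orbit projects into a product $O_1\times O_2$ of a $D_1$-orbit and a $D_2$-orbit, so it suffices to decompose each such product into $\langle d\rangle$-orbits. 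On $O_1$ (of size $b$) the element $d_1$ acts as a single $b$-cycle, and on $O_2$ (of size $b'$) the element $d_2$ acts as a $b'$-cycle. The orbit of $(x,y)$ under $d$ therefore has size $\LCM(b,b')$, since $d^n$ fixes $(x,y)$ iff $b\mid n$ and $b'\mid n$. Hence $O_1\times O_2$ splits into $bb'/\LCM(b,b')$ orbits of size $\LCM(b,b')$. Summing over the $a$ orbits of size $b$ and the $a'$ orbits of size $b'$ gives $aba'b'/\LCM(b,b')$ orbits of size $\LCM(b,b')$. By linearity this is exactly the claimed product of types.

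For the pair version, fix a $D$-orbit $\mathcal O\subseteq O_1\times O_2$. Since $D$ is abelian, $I$ is normal in $D$. So $D$ permutes the $I$-orbits inside $\mathcal O$ transitively, and these $I$-orbits all have the same size. It remains to compute the size of the $I$-orbit of one point $(x,y)$. Running the same argument with $d^k$ in place of $d$ shows this size is $\LCM(c,c')$, where $c$ and $c'$ are the $I_1$- and $I_2$-orbit sizes of $x$ and $y$. These are exactly the numbers recorded in the types $a\times(b,c)$ and $a'\times(b',c')$, since all $I_1$-orbits inside a given $D_1$-orbit share a common size by the same normality argument.

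The main obstacle I expect is bookkeeping rather than mathematics. One point is making sure the orbit-size normalization ($\#DgH/\#H$) matches the paper's convention of ``order $b\#\Gamma$''. Another is that the passage $\Gamma\backslash G/D$ versus $D\backslash G/\Gamma$ is harmless, as in Lemma~\ref{lemma:factor}. The third is making explicit that cyclicity of $D$ (so that each $d_i$ acts on an orbit as a single cycle) is where the $\LCM$ comes from.
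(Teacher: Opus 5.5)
Your proof is the paper's own argument: split $W_1\times W_2$ into the $D$-stable blocks $O_1\times O_2$ and show that each $D$-orbit in a block has size $\LCM(b_1,b_2)$. The difference is that you prove this orbit-size step, via $d^n(x,y)=(x,y)\iff\LCM(b,b')\mid n$, where the paper only asserts it, and you are right that cyclicity (or some such hypothesis) is genuinely needed, since your $D=D_1\times D_2$ example breaks the statement as written. One correction to your aside: cyclic $D$ covers only the unramified applications, while the paper also applies the pair version to the non-cyclic ramified decomposition groups of Theorem~\ref{thm:main2}, where the $\LCM$ formula really does fail. For example, for $X_0(11)$ and $N=63$ it predicts primes of $K$ of residue degree $1$ above $7$, yet $K$ contains the field of definition of a point of order $9$, and $X_0(11)$ has only $10$ points over $\F_7$, so none of order $9$.
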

\begin{proof}
Let $W_i$ denote the coset space $G_i/\Gamma_i$.  A double coset $D_i g_i \Gamma_i$ of order $b_i \#\Gamma_i$
corresponds to an orbit $O_i$ of order $b_i$ for the left action of $D_i$ on $W_i$.  The orbit of $D$ acting on $W_1 \times W_2$
is then of order $\LCM(b_1,b_2)$, so that the double coset $(\Gamma_1 \times \Gamma_2) (g_1,g_2) D$ has order $\LCM(b_1,b_2)\#\Gamma_1\#\Gamma_2$.  That is,
$O_1 \times O_2$ partitions into $\frac{b_1b_2}{\LCM(b_1,b_2)}$ orbits for the action of $D$, each of order $\LCM(b_1,b_2)$.
This shows that the product of $(1 \times b_1)_1$ and $(1 \times b_2)_2$ is indeed
$$\frac{b_1 b_2}{\LCM(b_1,b_2)} \times \LCM(b_1,b_2)_{1 \times 2}$$
as claimed.  The lemma follows easily from this.
\end{proof}

\section{Conjugacy classes modulo prime powers} \label{sec:cc}

As $\GL_2(\Z/N)$ decomposes as a product of the groups $\GL_2(\Z/p^n)$ as $p^n$ ranges over prime power factors of $N$, 
by Lemma~\ref{lemma:dcpprod} it is not difficult to recover the general case of double coset types for $\GL_2(\Z/N)$ from the case where $N$ is a prime power.
By Lemma~\ref{l:myst} it will suffice to consider subgroups $D \subseteq \GL_2(\Zpn)$ up to conjugacy.  In this section we therefore determine the conjugacy classes in $\GL_2(\Z/p^n)$.

We begin with an elegant characterization of conjugacy in $\GL_2(\Zp)$; we learned it from
\cite{cent}.
For a matrix $A \in \GL_2(\Zp)$, write $\chi_A \in \Zp[x]$ for its 
characteristic polynomial.  Let
$\mu(A)$ denote the largest  $\mu \in \Z \cup \{\infty\}$ such that $A$ is a scalar matrix modulo $p^{\mu}$.

\begin{lemma} \label{l:conjZp}
Two matrices $A,B \in \GL_2(\Zp)$ are conjugate if and only if $\chi_A = \chi_B$ and $\mu(A) = \mu(B)$.
\end{lemma}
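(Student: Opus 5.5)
The forward direction is immediate. Conjugation preserves the characteristic polynomial, and it preserves the set of matrices that are scalar modulo $p^\mu$, because $g(\lambda I + p^\mu X)g^{-1} = \lambda I + p^\mu gXg^{-1}$. Hence conjugate matrices have the same $\chi$ and the same $\mu$. For the converse, suppose $\chi_A=\chi_B$ and $\mu(A)=\mu(B)=\mu$. If $\mu=\infty$, then $A$ and $B$ are scalar matrices with the same characteristic polynomial $(x-\lambda)^2$, so $A=B=\lambda I$. So assume $\mu<\infty$.

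Write $A=\lambda I + p^\mu A'$, where $\lambda\in\Zp$ is an integer lift of the scalar that $A$ reduces to modulo $p^\mu$. Here $A'\in M_2(\Zp)$ is not scalar modulo $p$, by maximality of $\mu$. The next step is to make the choice of $\lambda$ uniform. From $\chi_A$ we can read off $\mathrm{tr}(A)=2\lambda+p^\mu\mathrm{tr}(A')$. I would use the same $\lambda$ for $B$, and I need to check that $B-\lambda I\in p^\mu M_2(\Zp)$. We know $B\equiv \lambda' I \pmod{p^\mu}$ for some $\lambda'$. Since $A$ and $B$ have the same characteristic polynomial, $(x-\lambda)^2\equiv(x-\lambda')^2 \pmod{p^\mu}$. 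This gives $2\lambda\equiv2\lambda'$ and $\lambda^2\equiv\lambda'^2$. If $p$ is odd we conclude $\lambda\equiv\lambda'$. If $p=2$, I would instead work directly with $\lambda'$: write $B=\lambda' I+p^\mu B'$ and note that $\chi_{B'}$ is determined by $\chi_B$ and $\lambda'$. To handle $p=2$ cleanly, it is better to set things up so that we compare $A'$ and $B'$ via $A-\lambda I$ and $B-\lambda I$. Then either show $\lambda\equiv\lambda'$ modulo $p^\mu$ using the trace together with the determinant, or replace $\lambda$ suitably. This is the bookkeeping I expect to be slightly fiddly.

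Once $A=\lambda I+p^\mu A'$ and $B=\lambda I+p^\mu B'$ with the same $\lambda$, we have $\chi_{A'}=\chi_{B'}$, since $\chi_{A'}(x)=p^{-2\mu}\chi_A(\lambda+p^\mu x)$ and $\mathbf Z_p$ is torsion-free. Moreover, $A$ and $B$ are conjugate if and only if $A'$ and $B'$ are. So we reduce to the key case $\mu=0$: matrices $A',B'\in M_2(\Zp)$ that are non-scalar modulo $p$ and have the same characteristic polynomial $x^2-tx+d$.

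For this key case, since $\bar A'$ is not scalar, some vector $\bar v\in\Fp^2$ is not an eigenvector of $\bar A'$. Lift it to $v\in\Zp^2$. Then $v$ and $A'v$ reduce to a basis of $\Fp^2$, so by Nakayama they form a $\Zp$-basis of $\Zp^2$. In this basis $A'$ becomes the companion matrix $\mat{0}{-d}{1}{t}$, by Cayley--Hamilton. Thus $A'$ is $\GL_2(\Zp)$-conjugate to the companion matrix of its characteristic polynomial. The same holds for $B'$, and hence $A'$ and $B'$ are conjugate. The main obstacle is the uniform choice of $\lambda$ (especially at $p=2$). The cyclic-vector argument is the conceptual heart of the proof but is routine.
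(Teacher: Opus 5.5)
Your argument is, in substance, the decisive final step of the paper's proof. You write $A=\lambda I+p^{\mu}A'$ with $A'$ non-scalar modulo $p$, pick a vector that is not an eigenvector of $A'$ modulo $p$, and conclude by Nakayama and Cayley--Hamilton that $A'$ is $\GL_2(\Zp)$-conjugate to the companion matrix of $\chi_{A'}(x)=p^{-2\mu}\chi_A(\lambda+p^{\mu}x)$. The paper first passes through the order $R_A=\Qp[A]\cap M_2(\Zp)$ and shows $[R_A:\Zp[A]]=p^{\mu(A)}$, with $\Zp$-basis $1,A'$. This interprets $\mu(A)$ as an index, but it is not used in the conclusion, so your version is a legitimate streamlining. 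For odd $p$ your proof is complete.

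The one hole is the step you left open at $p=2$, and it is not mere bookkeeping. The scalar $\lambda \bmod p^{\mu}$ is itself a conjugacy invariant, so ``replacing $\lambda$ suitably'' cannot help. You must prove that $\chi_A$ and $\mu$ determine it, i.e.\ that $\lambda\equiv\lambda'\pmod{p^{\mu}}$; when $p=2$ the trace only gives this modulo $2^{\mu-1}$. Use evaluation instead. First, $\chi_A(\lambda')=\chi_B(\lambda')=\det(-p^{\mu}B')=p^{2\mu}\det B'$, so $v(\chi_A(\lambda'))\ge 2\mu$. On the other hand, with $\delta=\lambda'-\lambda$,
$$\chi_A(\lambda')=\delta^2-p^{\mu}\,\mathrm{tr}(A')\,\delta+p^{2\mu}\det A'.$$
If $v(\delta)<\mu$, the first term has valuation $2v(\delta)$, strictly smaller than the valuations of the other two. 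That forces $v(\chi_A(\lambda'))=2v(\delta)<2\mu$, a contradiction. Hence $\lambda\equiv\lambda'\pmod{p^{\mu}}$ for every $p$. Equivalently, both roots of $\chi_A$ in $\Qbar_p$ are congruent to $\lambda$ and to $\lambda'$ modulo $p^{\mu}$.

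The paper's own justification of this point (``the unique root of $\chi_A$ modulo $p^{\mu}$'') is also loose, since $(x-\alpha)^2$ has many roots modulo $p^{\mu}$ once $\mu\ge 2$. The argument above is the precise version, and your trace argument already suffices for the odd $p$ the paper actually uses.
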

\begin{proof}
It is clear that $\chi_A$ and $\mu(A)$ are conjugacy class invariants; we must show that they also determine the conjugacy class of $A$.
Clearly $A$ is scalar if and only if $\mu(A) = \infty$, in which case $\chi_A$ determines $A$ and the lemma is clear.  

We assume therefore that $A$ is not scalar, so that
$\chi_A$ is the minimal polynomial of $A$.  Thus $\Zp[A] \cong \Zp[x]/\chi_A$ and, in particular, is a free $\Zp$-module of rank 2; here we regard $\Zp[A]$ as a subring of the ring $M_2(\Zp)$ of $2 \times 2$ matrices with entries in $\Zp$.
The ring $\Zp[A]$ is a subring of $R_A := \Qp[A] \cap M_2(\Zp)$.  Note that $R_A$ is also a free $\Zp$-module of rank 2, as it is a compact open $\Zp$-submodule of
the two dimensional $\Qp$-vector space $\Qp[A]$.  It follows that $\Zp[A]$ has finite index $p^m$ in $R_A$.

Let $1$ and $\frac{a+bA}{p^m}$ be a $\Zp$-basis of $R_A$; as $p^m = [R_A:\Zp[A]]$, at least one of $a,b$ is a unit.   In fact,
$b$ must be a unit, for otherwise $$p^{m-1}\frac{a+bA}{p^m} - \frac{b}{p}A = \frac{a}{p}$$ would lie in $R_A \subseteq M_2(\Zp)$, contradicting $a$ being a unit.  As
the $2 \times 2$ matrix $a+bA$ is divisible by $p^m$, the action of $A$ on $(\Zp/p^m)^2$ must be by the scalar $-ab^{-1}$, which shows that
$m \leq \mu(A)$.  

By the definition of $\mu(A)$, we may choose $\alpha \in \Zp$ such that $A-\alpha$ is divisible by $p^{\mu(A)}$ in $M_2(\Zp)$.  Since
$A' := \frac{A-\alpha}{p^{\mu(A)}}$ commutes with $A$, it must lie in $\Qp[A]$ and thus in $R_A$.  It follows that $m \geq \mu(A)$.  We conclude that $m=\mu(A)$ and therefore that $1,A'$ is a $\Zp$-basis of $R_A$.

As $A$ is not scalar modulo $p^{\mu(A)+1}$, we must have that $A'$ is not a scalar on $(\Z_p/p)^2$.  We may therefore choose a basis $e_1,e_2$ of $\Zp^2$ such that $A'$ does not preserve the line
in $(\Zp/p)^2$ spanned by $e_1$.  Thus $e_1,A'(e_1)$ is a basis of $\Zp^2$ as well.  The matrix of $A'$ with respect to this basis is 
$$\mat{0}{-\tau}{1}{-\sigma}$$
where $A'$ has characteristic polynomial $x^2+\sigma x+\tau$.  As the characteristic polynomial of $A'$ is
$$p^{-2\mu(A)}\chi_A\left(p^{\mu(A)}x-\alpha\right)$$
and $\alpha$ is determined modulo $p^{\mu(A)}$ as the unique root of $\chi_A$ modulo $p^{\mu(A)}$, it follows that the conjugacy class of $A$ is entirely determined by $\chi_A$ and $\mu(A)$, as claimed.
\end{proof}

\begin{lemma} \label{l:muineq} For any $A \in \GL_2(\Zp)$, 
 $p^{2\mu(A)}$ divides the discriminant of $\chi_A$.
\end{lemma}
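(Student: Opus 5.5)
The plan is to write $A$ as a scalar plus $p^{\mu(A)}$ times an integral matrix, and observe that the discriminant of the characteristic polynomial is unchanged by adding a scalar and scales by the square of any scalar factor.

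First dispose of the case $\mu(A)=\infty$. Then $A$ is scalar, say $A=\alpha I$, so $\chi_A=(x-\alpha)^2$ has discriminant $0$. Since $0$ is divisible by every power of $p$, the statement holds with the natural convention $p^\infty \mid 0$.

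Now assume $\mu=\mu(A)$ is finite. By the definition of $\mu(A)$, as in the proof of Lemma~\ref{l:conjZp}, we may choose $\alpha\in\Zp$ and $A'\in M_2(\Zp)$ with
$$A=\alpha+p^{\mu}A'.$$
For any $2\times 2$ matrix $M$, the discriminant of $\chi_M$ equals $(\lambda_1-\lambda_2)^2$, where $\lambda_1,\lambda_2$ are the eigenvalues of $M$; equivalently it equals $(\operatorname{tr} M)^2-4\det M$. The eigenvalues of $A$ are $\alpha+p^\mu\lambda_i'$, where $\lambda_1',\lambda_2'$ are the eigenvalues of $A'$. Their difference is therefore $p^\mu(\lambda_1'-\lambda_2')$, which gives
$$\disc\chi_A=p^{2\mu}\disc\chi_{A'}.$$
One can also check this directly from the trace–determinant formula with a one-line computation, which avoids passing to an extension field.

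Since $A'$ has entries in $\Zp$, its trace and determinant lie in $\Zp$, so $\disc\chi_{A'}\in\Zp$. Hence $p^{2\mu}$ divides $\disc\chi_A$.

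There is no serious obstacle here. The only point needing care is the scalar case, where $\mu(A)=\infty$ and the statement must be read as $p^\infty\mid 0$.
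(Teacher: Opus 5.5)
Your proof is correct and is essentially the paper's argument: write $A=\alpha+p^{\mu}A'$ with $A'$ integral and observe that the discriminant picks up exactly the factor $p^{2\mu}$. Your explicit handling of the scalar case $\mu(A)=\infty$ fills in what the paper dismisses as trivial. Your identity $\disc\chi_A=p^{2\mu}\disc\chi_{A'}$ is the correct one. The paper's displayed value $p^{2\mu}(ad-bc)$ is a slip for $p^{2\mu}\bigl((a-d)^2+4bc\bigr)$, though the divisibility conclusion is unaffected.
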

\begin{proof}
Set $\mu = \mu(A)$; we may assume that $\mu > 0$ or else there is nothing to prove.  By the definition of $\mu$ we may choose
$\alpha \in \Zp$ such that we can write
$$A = \mat{\alpha +ap^\mu}{bp^\mu}{cp^\mu}{\alpha+dp^\mu }$$
for $a,b,c,d \in \Zp$.
Then the discriminant of $\chi_A$ is given by
$$(\alpha+ap^\mu + \alpha +dp^\mu)^2 - 4\bigl( (\alpha+ap^\mu)(\alpha+dp^\mu)-bp^\mu cp^\mu) \bigr) =
p^{2\mu}(ad-bc).$$
\end{proof}

\begin{cor} \label{c:irregform} Let $A \in \GL_2(\Zp)$ satisfy $\mu(A) > 0$.  Then there are $\alpha, \beta \in \Zp^\times$ and $\nu \geq \mu(A)$ such that $A$ is conjugate to
$$\mat{ \alpha}{ \beta p^\nu}{ p^{\mu(A)}}{ \alpha}.$$
\end{cor}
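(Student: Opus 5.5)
Since $\mu=\mu(A)>0$, Lemma~\ref{l:conjZp} tells us that the conjugacy class of $A$ is determined by $\chi_A$ and $\mu$. So I will show that a matrix of the stated form has the same characteristic polynomial and the same $\mu$ as $A$. Write $\chi_A = x^2 - t x + d$ with $t,d\in\Zp$. For
$$B = \mat{\alpha}{\beta p^\nu}{p^{\mu}}{\alpha}$$
we have $\chi_B = x^2 - 2\alpha x + \alpha^2 - \beta p^{\mu+\nu}$. Whenever $\nu \ge \mu$, clearly $\mu(B)=\mu$: $B$ is scalar modulo $p^\mu$, and its lower-left entry $p^\mu$ is nonzero modulo $p^{\mu+1}$.

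It remains to solve $2\alpha = t$ and $\alpha^2 - \beta p^{\mu+\nu} = d$, i.e.\ $\beta p^{\mu+\nu} = t^2/4 - d = \disc(\chi_A)/4$. I will take $\alpha = t/2$, which requires $p$ odd; this matches the paper's standing assumption that $N$ is odd, and I will state it. Then $\alpha$ is a unit, because $A$ is congruent modulo $p$ to a scalar $\alpha_0$ with $\alpha_0\in\Zp^\times$ (as $A$ is invertible), so $t \equiv 2\alpha_0 \pmod p$.

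By Lemma~\ref{l:muineq}, $p^{2\mu}$ divides $\disc(\chi_A)$, so $\disc(\chi_A)/4 = u p^{k}$ with $u\in\Zp^\times$ and $k \ge 2\mu$. Setting $\nu = k-\mu \ge \mu$ and $\beta = u$ gives the desired $B$. Then $\chi_A=\chi_B$ and $\mu(A)=\mu(B)$, so $A$ and $B$ are conjugate by Lemma~\ref{l:conjZp}.

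The main obstacle is the case $\disc(\chi_A)=0$. Then $k=\infty$, and a unit $\beta$ with finite $\nu$ cannot work. I expect this case to be vacuous: a non-scalar $A$ with repeated eigenvalue $\alpha$ satisfies $\mu(A)=0$. Indeed $A-\alpha$ is then nilpotent and nonzero, so it has rank one; if it were divisible by $p$, then $(A-\alpha)/p$ would be a nonzero nilpotent with the same kernel, and iterating this would contradict $A-\alpha\neq 0$. I will therefore argue that if $\mu(A)$ is finite and positive then $\disc(\chi_A)\neq0$. If the statement is meant to allow $\mu(A)=\infty$ (scalar $A$), that case would have to be excluded or handled separately, by reading the form with $p^\infty = 0$.
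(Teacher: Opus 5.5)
For $\disc(\chi_A)\neq 0$ your argument is the paper's. You take $\alpha=t/2$, use Lemma~\ref{l:muineq} to get $\nu=v(\disc(\chi_A))-\mu\ge\mu$, check that the model matrix has the same characteristic polynomial and the same $\mu$, and conclude by Lemma~\ref{l:conjZp}. Your normalization $\beta p^{\mu+\nu}=\disc(\chi_A)/4$ is the correct one: the paper writes $\chi_A=x^2-\sigma x+\tau$ and takes $\beta=(\sigma^2-4\tau)/p^{\mu+\nu}$, which drops the unit $4$. You also check that $\alpha$ is a unit and that $\mu(B)=\mu$, which the paper leaves implicit.

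The genuine error is in your last paragraph: the case $\disc(\chi_A)=0$ with $0<\mu(A)<\infty$ is not vacuous. Take $A=\mat{1}{0}{p}{1}$. It is scalar modulo $p$ but not modulo $p^2$, so $\mu(A)=1$, yet $\chi_A=(x-1)^2$. Your iteration stops at the first step: $(A-1)/p=\mat{0}{0}{1}{0}$ is a nonzero nilpotent that is not divisible by $p$ again, so there is nothing to iterate and no contradiction.

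Worse, for this $A$ the corollary as literally stated is false. Every $\mat{\alpha}{\beta p^\nu}{p^{\mu}}{\alpha}$ with $\beta\in\Zp^\times$ and finite $\nu$ has discriminant $4\beta p^{\mu+\nu}\neq 0$. So this case cannot be argued away. It has to be built into the statement by allowing $\nu=\infty$ (upper right entry $0$), the convention you proposed only for scalar $A$. With that convention the case is immediate from Lemma~\ref{l:conjZp}: $A$ is conjugate to $\mat{t/2}{0}{p^{\mu}}{t/2}$, since both have characteristic polynomial $(x-t/2)^2$ and the same $\mu$.

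The paper's proof has the same blind spot, since its $\nu=v(\sigma^2-4\tau)-\mu(A)$ is silently $\infty$ here. Its application in Proposition~\ref{p:ccpn} shows this is the intended reading: only $\nu\le n$ matters modulo $p^n$, and $\nu=n$ gives the classes $\text{I}'_\mu(\alpha)$. Replace your claimed vacuity by this convention rather than trying to prove a false lemma.
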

\begin{proof}
Write $\chi_A = x^2-\sigma x + \tau$.  Set $\alpha = \frac{1}{2}\sigma$.  By Lemma~\ref{l:muineq},
$$v(\sigma^2-4\tau) \geq 2\mu(A)$$
so setting $\nu=v(\sigma^2-4\tau)-\mu(A)$ and $\beta = \frac{\sigma^2-4\tau}{p^{\mu(A)+\nu}}$ we have $\nu \geq \mu(A)$ and $\beta \in \Zp^\times$.  With these choices the given
matrix has the same characteristic polynomial and $\mu$ as $A$ does, so by Lemma~\ref{l:conjZp} they are conjugate.
\end{proof}

\begin{prop}   \label{p:ccpn}
The conjugacy classes in $GL_2(\Z/p^n)$ are those given in Table~\ref{t:cc}.
\end{prop}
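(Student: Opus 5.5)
We have not seen Table~\ref{t:cc}, but it presumably lists the classes in four families. The \emph{scalar} matrices are $\alpha I$. The \emph{non-scalar} matrices are indexed by $\mu\in\{0,\dots,n-1\}$, and each has the normal form $\alpha + p^\mu A'$, where $A'$ is taken modulo $p^{n-\mu}$ and is non-scalar modulo $p$. By the proof of Lemma~\ref{l:conjZp}, $A'$ may be taken in companion form. For $\mu=0$ the classes are further split by whether the reduction modulo $p$ has split, nonsplit, or repeated-root characteristic polynomial. Within each family we also record the number of classes and the size of each class. The plan is to reduce to $\GL_2(\Zp)$, use Lemma~\ref{l:conjZp} and Corollary~\ref{c:irregform} there, and then understand how much of that data survives reduction modulo $p^n$.

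\textbf{Step 1: lifting conjugacy.} Every $\bar A\in\GL_2(\Zpn)$ lifts to $\GL_2(\Zp)$. The map $\GL_2(\Zp)\to\GL_2(\Zpn)$ is surjective, so two matrices are conjugate modulo $p^n$ if and only if suitable lifts are conjugate up to an error divisible by $p^n$. Set $\mu=\mu(\bar A)\in\{0,\dots,n\}$, where $\mu=n$ means $\bar A$ is scalar. For $\mu<n$ write $\bar A=\alpha+p^\mu A'$, as in the proof of Lemma~\ref{l:conjZp}. Here $\alpha$ is well defined modulo $p^\mu$, and $A'$ is well defined modulo $p^{n-\mu}$ and is non-scalar modulo $p$. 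Since $A'$ is non-scalar mod $p$, it is cyclic, so there is a vector $e_1$ with $e_1,A'e_1$ a basis. This conjugates $A'$ to the companion matrix $\mat{0}{-\tau}{1}{-\sigma}$, with $\sigma,\tau$ taken modulo $p^{n-\mu}$.

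\textbf{Step 2: the complete invariant.} The conjugacy class of $\bar A$ should be determined by $\mu$ together with the pair $\bigl(\alpha \bmod p^\mu,\ (\sigma,\tau)\bmod p^{n-\mu}\bigr)$. This pair is not canonical: changing $\alpha$ to $\alpha+p^\mu t$ changes $A'$ to $A'-t$. So the right invariant is the characteristic polynomial $\chi_{\bar A}$ modulo $p^n$ together with $\mu$. One must then determine exactly which monic polynomials modulo $p^n$ occur for a given $\mu$. By Lemma~\ref{l:muineq} these are the polynomials whose discriminant is divisible by $p^{2\mu}$ and that have the shape $(x-\alpha)^2$ modulo $p^\mu$. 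One must also check that distinct $(\chi,\mu)$ give distinct classes.

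To count the classes for fixed $\mu<n$, use the parametrization $\alpha+p^\mu A'$. Here $\alpha$ ranges over $\Z/p^n$ and $A'$ ranges over companion matrices modulo $p^{n-\mu}$, modulo the equivalence $(\alpha,A')\sim(\alpha+p^\mu t,A'-t)$. Normalizing the trace of $A'$, this gives $(p^n-p^{n-1})\cdot p^{n-\mu}$ invertible choices. Refining by the splitting behaviour of $\chi_{A'}$ modulo $p$ gives the split, nonsplit, and repeated types. Class sizes follow from centralizers. The centralizer of a non-scalar $\bar A$ is $(\Zpn[\bar A])^\times$-like: explicitly it is $\{a+bA' : a \bmod p^n,\ b \bmod p^{n-\mu}\}$ intersected with the invertible matrices. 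Its order is computed case by case from the type of $\chi_{A'}$ mod $p$.

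\textbf{Main obstacle.} The hard step is the well-definedness and injectivity in Step 2. Lemma~\ref{l:conjZp} is a statement over $\Zp$, and two matrices in $\GL_2(\Zpn)$ with the same $\chi$ mod $p^n$ and the same $\mu$ might have $\Zp$-lifts with different $\mu$ or different $\chi$. I would handle this directly rather than by lifting. Given $\bar A$ and $\bar B$ with the same $\mu<n$ and the same $\chi$ modulo $p^n$, first choose a common $\alpha$, which is possible since $\alpha$ is determined modulo $p^\mu$. Then $A'$ and $B'$ have the same characteristic polynomial modulo $p^{n-\mu}$: the trace matches after the shift, and the determinant matches because $p^{2\mu}\det A'$ is determined modulo $p^{n}$. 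The subtle point is that this only gives $\det A'$ modulo $p^{n-2\mu}$. So when $2\mu>0$, $\chi$ modulo $p^n$ loses information. The correct invariant is therefore $\chi_{A'}$ modulo $p^{n-\mu}$, up to the translation, rather than $\chi_{\bar A}$. Once that is settled, the companion-form argument of Step 1 finishes the proof, and the table entries are recorded with this refined invariant.
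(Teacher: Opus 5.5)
\textbf{Gap: the centralizer of an irregular element.} Your parametrization of the classes is sound. The class-size step fails as written, however, and the sizes are part of Table~\ref{t:cc}.

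You describe the centralizer of a non-scalar $\bar A=\alpha+p^\mu A'$ as ``$(\Zpn[\bar A])^\times$-like'', namely $\{a+bA' : a \bmod p^n,\ b \bmod p^{n-\mu}\}$ cut down to invertible matrices. For $\mu\ge1$ this is false, and its failure is exactly what makes these elements irregular. For every $Y\in M_2(\Zpn)$ one has $(1+p^{n-\mu}Y)\bar A-\bar A(1+p^{n-\mu}Y)=p^{n}(YA'-A'Y)=0$. So the centralizer contains the whole kernel of $\GL_2(\Zpn)\to\GL_2(\Z/p^{n-\mu})$, which has order $p^{4\mu}$.

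The correct statement is that $X$ commutes with $\bar A$ if and only if $X \bmod p^{n-\mu}$ commutes with $A'$. Since $A'$ is cyclic, this means $X \bmod p^{n-\mu}$ lies in $\Z/p^{n-\mu}[A']$. Hence the centralizer is the full preimage of $(\Z/p^{n-\mu}[A'])^\times$. Its order is $p^{4\mu}\cdot p^{2(n-\mu-1)}c=p^{2n+2\mu-2}c$, where $c=(p-1)^2$, $p^2-1$ or $p(p-1)$ according as $\chi_{A'}$ is split, irreducible, or has a repeated root mod $p$. This exceeds $p^{2n-\mu}$, the number of parameters in your set.

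Your description would make the irregular classes far too large. Each class for a given $\mu$ would have size at least $\#\GL_2(\Zpn)/p^{2n-\mu}$. There are $(p^n-p^{n-1})p^{n-\mu}$ such classes, so they would already total at least $(p-1)^3(p+1)p^{4n-4}$. That is more than the $(p-1)p^{4n-4}$ matrices that are scalar mod $p$. The corrected order gives class sizes $(p+1)p^{2(n-\mu)-1}$, $(p-1)p^{2(n-\mu)-1}$ and $(p^2-1)p^{2(n-\mu)-2}$, uniformly for $0\le\mu<n$, which are the table's values.

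\textbf{Comparison with the paper's route.} With that repaired, your argument differs from the paper's and is more direct. The paper treats regular elements by citing McDonald's rational canonical form theorem. For irregular $g$ it lifts to $\GL_2(\Zp)$ and uses Corollary~\ref{c:irregform} only to show that every such $g$ is conjugate to some $\mat{\alpha}{\beta p^\nu}{p^\mu}{\alpha}$. It then proves pairwise non-conjugacy indirectly, by checking that the listed class sizes sum to $(p-1)p^{4n-4}$.

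You instead reduce $\bar A$ to the regular element $A'$ of $M_2(\Z/p^{n-\mu})$, modulo $(\alpha,A')\sim(\alpha+p^\mu t,A'-t)$. This yields existence and distinctness together, without the $\Zp$ theory or the count. It also correctly shows that Lemma~\ref{l:conjZp} does not descend naively to $\Zpn$. For example, with $n=2$ and $\mu=1$, every $I'_1(\alpha)$ and $I^\pm_1(\alpha,\beta)$ has $\chi\equiv(x-\alpha)^2 \bmod p^2$. So you should delete the sentence in Step 2 proposing $(\chi_{\bar A}\bmod p^n,\mu)$ as the invariant, together with its description via Lemma~\ref{l:muineq}.

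To match the table, normalize $\mathrm{tr}\,A'=0$. This is possible since $p$ is odd, and it fixes $\alpha=\frac12\mathrm{tr}\,\bar A \bmod p^n$. Then set $\tau=\det A' \bmod p^{n-\mu}$. For $\mu\ge1$:
\begin{itemize}
\item $\tau=0$ gives $I'_\mu(\alpha)$;
\item $0<v(\tau)<n-\mu$ gives $I'_{\mu,\mu+v(\tau)}(\alpha,-\tau/p^{v(\tau)})$;
\item $v(\tau)=0$ gives $I^\pm_\mu(\alpha,-\tau)$.
\end{itemize}
For $\mu=0$, your repeated/split/nonsplit trichotomy is exactly II/III/IV.
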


\begin{table} 
\renewcommand{\arraystretch}{1}
$$\begin{array}{|l|l|l|l|} \hline
\text{Label} & \text{Representative} & \text{Parameters} &  \text{Size of each conjugacy class} \\ \hline 
I(\alpha) & \mat{\alpha}{0}{0}{\alpha} & \begin{array}{l} \alpha \in \Zpns \end{array} &  1 \\  \hline
I'_{\mu,\nu}(\alpha,\beta) & \mat{\alpha}{\beta p^\nu}{p^\mu}{\alpha} & \begin{array}{l} \alpha \in \Zpns \\ 1 \leq \mu < \nu < n \\ \beta \in (\Z/p^{n-\nu})^\times \end{array}&  (p^2-1)p^{2(n-\mu)-2} \\ \hline
I'_{\mu}(\alpha) & \mat{\alpha}{0}{p^\mu}{\alpha} & \begin{array}{l} \alpha \in \Zpns \\ 1 \leq \mu < n \end{array}& (p^2-1)p^{2(n-\mu)-2} \\ \hline
I^-_{\mu}(\alpha,\beta) & \mat{\alpha}{\beta p^\mu}{p^\mu}{\alpha} & \begin{array}{l} \alpha \in \Zpns \\ 1 \leq \mu < n \\ \beta \in (\Z/p^{n-\mu})^\times \\ \beta \notin (\Z/p^{n-\mu})^{\times 2} \end{array} &(p-1)p^{2(n-\mu)-1} \\  \hline
I^+_{\mu}(\alpha,\beta) & \mat{\alpha}{\beta p^\mu}{p^\mu}{\alpha} & \begin{array}{l} \alpha \in \Zpns \\ 1 \leq \mu < n \\ \beta \in (\Z/p^{n-\mu})^{\times 2}\end{array}&(p+1)p^{2(n-\mu)-1} \\ \hline
\text{II}(\alpha,\beta) & \mat{\alpha}{\beta p}{1}{\alpha} & \begin{array}{l} \alpha \in \Zpns \\ \beta \in \Z/p^{n-1} \end{array} & (p^2-1)p^{2n-2} \\ \hline
\text{III}(\alpha,\beta) & \mat{\alpha}{0}{0}{\beta} & \begin{array}{l} \alpha,\beta \in \Zpns \\ \alpha \not\equiv \beta \bmod{p} \end{array} & (p+1)p^{2n-1} \\ \hline 
\text{IV}(\alpha,\beta) & \mat{0}{\alpha}{1}{\beta} & \begin{array}{l} \alpha \in \Zpns \\ \beta \in \Zpn \\ \beta^2 +4\alpha \in \Zpns \\ \beta^2+4\alpha \notin (\Zpn)^{\times 2}  \end{array} & (p-1)p^{2n-1} \\ \hline
\end{array}$$
\renewcommand{\arraystretch}{1}
\caption{Conjugacy Classes in $\GL_2(\Z/p^n)$} \label{t:cc}
\end{table}

\begin{proof}
By \cite[Theorem III.2]{McDonald}, every regular element of $G$ can be put into rational canonical form and thus its conjugacy class is determined entirely by its characteristic polynomial.   (Here $g$ is {\it regular} if the reduction of its minimal polynomial equals the minimal polynomial of its reduction.)  For
$\GL_2(\Zpn)$, a scalar matrix is always regular, while a non-scalar matrix is regular if and only if it is not congruent to a scalar matrix modulo $p$: these are precisely cases $\text{I}$, $\text{II}$, $\text{III}$ and $\text{IV}$.  Cases $\text{I}$ and $\text{II}$ cover characteristic polynomials with
non-unit discriminant (zero and non-zero, respectively) while cases $\text{III}$ and $\text{IV}$ cover characteristic polynomials (reducible and irreducible, respectively) with unit discriminant.

It remains to consider the irregular case.
Let $g \in \GL_2(\Zpn)$ be irregular.  Choosing an arbitrary lift of $g$ to $\GL_2(\Zp)$ and applying 
Corollary~\ref{c:irregform}, we know that $g$ is conjugate (over $\Zp$ and thus over $\Zpn$ as well) to a matrix of the form
$$\mat{ \alpha}{ \beta p^\nu}{ p^{\mu}}{ \alpha}$$
for $\alpha,\beta \in \Zpns$ and $1 \leq \mu \leq \nu \leq n$.  All that remains to be shown is that none of these matrices are conjugate to one another and to verify the sizes of the conjugacy classes.  In fact, it suffices to do the latter: a sufficiently invested reader can verify that as all parameters vary, the classes $I(\alpha)$, $I'_{\mu,\nu}(\alpha,\beta)$, $I'_\mu(\alpha)$, $I^-_\mu(\alpha,\beta)$ and $I^+_\mu(\alpha,\beta)$ contain a total of $(p-1)p^{4n-4}$ elements, which is precisely the number of lifts of scalar elements in $\GL_2(\Fp)$ to $\GL_2(\Zpn)$.

Computing the size of each conjugacy class is a straightforward centralizer calculation.  We illustrate in the most interesting case of $I^+_\mu(\alpha,\beta)$.  We must find all
$$\mat{a}{b}{c}{d} \in \GL_2(\Zpn)$$
such that
\begin{align*}
\mat{\alpha}{\beta p^\mu}{p^\mu}{\alpha} \mat{a}{b}{c}{d} &=  \mat{a}{b}{c}{d} \mat{\alpha}{\beta p^\mu}{p^\mu}{\alpha} \\
\mat{\alpha a + \beta p^\mu c}{\alpha b + \beta p^\mu d}{p^\mu a + \alpha c}{p^\mu b + \alpha d} &= \mat{a \alpha + b p^\mu}{a\beta p^\mu+b\alpha}{c \alpha+d p^\mu}{c\beta p^\mu+d\alpha}.
\end{align*}
Making the obvious cancellations, this gives rise to the system of equations
\begin{align*}
\beta p^\mu c &= bp^\mu \\
\beta p^\mu d &= a\beta p^\mu \\
p^\mu a &= dp^\mu \\
p^\mu b &= c\beta p^\mu
\end{align*}
Clearly these are equivalent to
\begin{align*}
d &\equiv a \pmod{p^{n-\mu}} \\
b &\equiv \beta c \pmod{p^{n-\mu}}
\end{align*}
Thus an element of the centralizer has the form
$$\mat{a}{\beta c + xp^{n-\mu}}{c}{a+yp^{n-\mu}}$$
for $x,y \in \Z/p^{\mu}$.
However, such a matrix need not be invertible: it has determinant
$$a(a+yp^{n-\mu}) - c(\beta c + xp^{n-\mu}) = a^2-\beta c^2 + p^{n-\mu}(ay+xc).$$
This is invertible so long as $a^2-\beta c^2$ is a unit.  Since $\beta$ is a square, considering the reduction of the vector $\left( \begin{array}{c} a \\ c \end{array} \right)$ as an element of $\P^1(\Fp)$, we see that $p-1$ of the $p+1$ elements will cause $a^2-\beta c^2$ to be a unit.  Thus the total number of choices of $a,c \in \Zpn$ giving rise to invertible matrices is
$$(p-1)^2p^{2n-2};$$
incorporating $x,y$ as well, we conclude that the centralizer has order
$$(p-1)^2 p^{2n-2}p^{2\mu}.$$
As $\GL_2(\Zpn)$ has order $(p-1)^2 (p+1)p^{4n-3}$ the asserted formula for the size of the conjugacy class follows.

\end{proof}

It is not especially difficult to give an algorithm to determine which of these conjugacy classes any particular elements of $\GL_2(\Zpn)$ lies in.  Given
$$g=\mat{a}{b}{c}{d} \in \GL_2(\Zpn):$$
\begin{enumerate}

\item Compute
\begin{align*}
\sigma &= a+d; \\
\tau &= ad-bc; \\
\Delta &= \sigma^2-4\tau; \\
\mu &= \min\{v(a-d),v(b),v(c)\}.
\end{align*}
\item If $\mu \geq n$, then $g$ lies in $\text{I}(a)$.
\item If $\mu = 0$, then consider $\Delta$.
\begin{enumerate}
\item If $\Delta \not\in \Zpns$, then $g$ lies in 
$\text{II}(\frac{\sigma}{2},\frac{\Delta}{4p})$.
\item If $\Delta \in (\Zpn)^{\times 2}$, then
$g$ lies in $\text{III}(\alpha,\beta)$ with
\begin{align*}
\alpha &= \frac{\sigma-\sqrt{\Delta}}{2} \\
\beta &= \frac{\sigma+\sqrt{\Delta}}{2}.
\end{align*}
\item If $\Delta \in \Zpns$ is not a square, then
$g$ lies in $\text{IV}(-\tau,-\sigma)$.
\end{enumerate}
\item If $0 < \mu < n$, then set $\alpha = \frac{\sigma}{2}$ and $\nu=v(\Delta)-\mu$.  If $\nu =n$, then $g$ lies in $\text{I}'_\mu(\alpha)$.  Otherwise set
$$\beta = \frac{\Delta}{4p^{\mu+\nu}}.$$
\begin{enumerate}
\item If $\mu < \nu < n$, then $g$ lies in
$\text{I}'_{\mu,\nu}(\alpha,\beta)$.
\item If $\mu=\nu$, then $g$ lies in
$\text{I}^{\pm}_{\mu}(\alpha,\beta)$ where the sign
equals the Legendre symbol $\left(\frac{\beta}{p}\right)$.
\end{enumerate}

\end{enumerate}

\section{Double coset types: unramified case}

\subsection{Preliminaries}

Fix an odd integer $N$.  Let $G=\GL_2(\Z/N)$ and
$$\Gamma = \left\{ \mat{1}{*}{0}{*} \right\} \subseteq G.$$
Let $L/E$ be an extension of number fields with $\Gal(L/E) \cong G$ and set
$K = L^\Gamma$.  Our primary goal is to understand prime decompositions for the extension
$K/E$.  By Lemma~\ref{lemma:factor}, it is equivalent to understand the double coset types
$D \backslash G / \Gamma$ as $D$ varies over all possible decomposition groups in $G$.  In this section we give a complete answer to this question in the unramified case, so that $D$ may be any cyclic subgroup of $G$.

\begin{lemma} \label{l:coset}
The cosets $G/\Gamma$ are of the form
$$\C_{a,c} = \left\{ \mat{a}{*}{c}{*} \right\}.$$
\end{lemma}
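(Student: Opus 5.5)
The plan is to identify left cosets $g\Gamma$ by the first column of $g$. Right multiplication by $\Gamma$ leaves the first column of a matrix unchanged, since
$$\mat{a}{b}{c}{d}\mat{1}{x}{0}{y} = \mat{a}{ax+by}{c}{cx+dy}.$$
So every coset $g\Gamma$ sits inside the set $\C_{a,c}$, where $\vec{a}{c}$ is the first column of $g$.

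For the reverse inclusion, take $g=\mat{a}{b}{c}{d}$ and $g'=\mat{a}{b'}{c}{d'}$ in $G$ with the same first column. Then $g^{-1}g'$ sends $e_1$ to $g^{-1}\vec{a}{c}=e_1$. Hence $g^{-1}g'$ has first column $\vec{1}{0}$, and being invertible it lies in $\Gamma$. Therefore $g'\in g\Gamma$, so each nonempty $\C_{a,c}$ is exactly one coset.

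Finally I will pin down which pairs $(a,c)$ actually occur. A vector $\vec{a}{c}$ is the first column of some element of $\GL_2(\ZN)$ exactly when it has order $N$, that is, when it is unimodular (the ideal generated by $a$ and $c$ is all of $\ZN$). By the Chinese remainder theorem this reduces to prime powers $p^n$, where it means $\vec{a}{c}\not\equiv 0 \pmod p$. Such a vector can be completed to a basis: if $a$ is a unit mod $p$ use $\vec{0}{1}$, and otherwise use $\vec{1}{0}$.

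As a consistency check, the number of unimodular vectors is $\#G/\#\Gamma$, since $\#\Gamma=\#(\ZN)^\times\cdot N$. None of these steps is a real obstacle; the only point needing care is the parametrization, where $(a,c)$ should range over the points of order exactly $N$ in $(\ZN)^2$.
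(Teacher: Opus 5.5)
Your proof is correct and is essentially the paper's argument: right multiplication by $\Gamma$ fixes the first column, and two elements of $G$ with the same first column differ by an element of $\Gamma$. The paper phrases the second step as $g$ acting bijectively on column vectors, while you phrase it as $g^{-1}g'$ fixing $e_1$; your extra identification of the occurring pairs $(a,c)$ as unimodular vectors is also correct, and it anticipates the set $W$ the paper introduces right afterward.
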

\begin{proof}
If $g=\mat{a}{b}{c}{d} \in G$ and $\gamma=\mat{1}{x}{0}{y} \in \Gamma$, then
$$g\gamma = \mat{a}{ax+by}{c}{cx+dy}$$
so it is enough to show that any element of $G$ with first column $\left(\begin{array}{c}a \\ c \end{array}\right)$ can be written in this way.  This is obvious as $g$ acts bijectively on the set of column vectors.
\end{proof}

Recall that if $N=p_1^{n_1}\cdots p_m^{n_m}$, then
$$\GL_2(\Z/N) \cong \GL_2(\Z/p_1^{n_1}) \times \cdots \GL_2(\Z/p_m^{n_m})$$
and
$$\Gamma \cong \Gamma_1 \times \cdots \times \Gamma_m$$
where $\Gamma_i$ is the $\GL_2(\Z/p_i^{n_i})$-analogue of $\Gamma$.  By
Lemma~\ref{lemma:dcpprod} we may thus reduce to the case when $N$ is an odd prime power.

For the rest of this section we fix a power $p^n$ of an odd prime $p$.
Fix $g \in \GL_2(\Z/p^n)$.
By Lemma~\ref{l:coset}, to study the double coset type $\left< g \right> \backslash G / \Gamma$ it suffices to study the action of $g$ on the set 
$$W = \left\{ \vec{a}{c} ; a,c \in \Zpn, \text{~at least one of~}a,c \text{~in~}\Zpns \right\}.$$
Writing $\lambda_k(g)$ for the number of elements of $W$ lying in orbits of order precisely $k$ for the action of $\left< g \right>$, we see that the double coset type of $g$ is simply
\begin{equation} \label{eq:lkg}
\sum_{k=1}^{\infty} \frac{\lambda_k(g)}{k} \times k.
\end{equation}

We study the arithmetic function $\lambda_k(g)$ as follows.
Let $V$ denote a free $\Zpn$-module of rank $2$.  We let $G$ act on $V$ by left multiplication.  Given $g \in G$, define
$\ell_k^t(g)$ to be the $\Zpn$-module length of the kernel of the endormorphism
$$g^k-1 : p^t V \to p^t V.$$
Since $W = V - pV$, we see that
$$\lt_{k}(g) := p^{\ell_k^0(g)} - p^{\ell_k^1(g)}$$
equals the number of elements of $W$ lying in orbits of order dividing $k$.  Thus
$$\lt_k(g) = \sum_{d \mid k} \lambda_d(g),$$
so that by M\"obius inversion we conclude that
$$\lambda_k(g) = \sum_{d \mid k} \mu\left(\frac{k}{d}\right) \lt_d(g).$$
Thus to compute our desired double coset types it suffices (in principle) to compute $\ell_k^t(g)$ for $t=0,1$ and all $g$ and $k$.

\subsection{Unramified double coset types: statement}

Given $\alpha \in \Zp^\times$, we write $\oa$ for the order of $\alpha$ in $\Zpns$ and
$\oab$ for the order of $\alpha$ in $\Fp^\times$.    We write $v$ for the usual valuation on $\Zp$.

\begin{lemma} \label{lemma:val}
Fix $\alpha \in \Zp^\times$ and $k \geq 1$.  Set
$v_\alpha = v(\alpha^{\oab}-1)$; note that $v_\alpha > 0$.  Then
$$v(\alpha^k-1) = \begin{cases} 0 & \oab \nmid k \\
\voa+v(k) & \oab \mid k.\end{cases}$$
Furthermore,
$$\oa = \oab p^{\max\{n-\voa,0\}}.$$  In particular, if $\voa \leq n$, then
$\voa = n- v(\oa)$.
\end{lemma}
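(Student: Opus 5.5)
The plan is to reduce everything to the lifting-the-exponent lemma for odd $p$, applied to $\beta = \alpha^{\oab}$, which is a $1$-unit since $\voa > 0$.

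First I handle the case $\oab \nmid k$. By definition of $\oab$, $\alpha^k \not\equiv 1 \pmod p$, so $v(\alpha^k-1)=0$.

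Next suppose $\oab \mid k$ and write $k = \oab m$. Then $\alpha^k - 1 = \beta^m - 1$ with $v(\beta-1)=\voa \ge 1$. I would establish $v(\beta^m-1)=\voa+v(m)$ in two steps.
\begin{itemize}
\item If $p\nmid m$, factor $\beta^m-1=(\beta-1)(1+\beta+\cdots+\beta^{m-1})$. The second factor is $\equiv m \not\equiv 0 \pmod p$, so $v(\beta^m-1)=\voa$.
\item If $m=p$, expand $\beta=1+u$ with $v(u)=\voa$ binomially: $\beta^p-1 = pu+\binom p2 u^2+\cdots+u^p$. Since $p$ is odd, every middle term has valuation at least $1+2\voa$, and $u^p$ has valuation $p\voa\ge 3\voa>\voa+1$. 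Hence the valuation is exactly $\voa+1$.
\end{itemize}
Iterating these two steps gives $v(\beta^m-1)=\voa+v(m)$. Finally, $v(k)=v(m)$ because $\oab\mid p-1$ is prime to $p$. This step, and specifically the oddness of $p$ in the $m=p$ case, is the only real content; everything else is bookkeeping.

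For the order formula, note that $\alpha^k=1$ in $\Zpns$ exactly when $v(\alpha^k-1)\ge n$. By the first part this holds if and only if $\oab\mid k$ and $\voa+v(k)\ge n$, i.e.\ $v(k)\ge \max\{n-\voa,0\}$. The smallest such $k$ is $\oab p^{\max\{n-\voa,0\}}$, again using $\gcd(\oab,p)=1$.

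For the last assertion, when $\voa\le n$ this gives $v(\oa)=n-\voa$, which rearranges to $\voa = n - v(\oa)$.
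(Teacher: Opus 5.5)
Your proposal is correct and takes essentially the paper's route: write $\alpha^{\oab}$ as a $1$-unit $1+p^{\voa}\beta$ and use the binomial expansion of its $p$-power powers to get $v(\alpha^k-1)=\voa+v(k)$, then read off $\oa$ from the criterion $v(\alpha^k-1)\ge n$. You also fill in two details the paper leaves to ``$\cdots$'': the prime-to-$p$ factor of $k/\oab$, and why the higher binomial terms cannot interfere, which is where $p$ odd is used.
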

\begin{proof}  By definition we can write
$$\alpha^{\oab} = 1+p^{\voa} \beta$$
for some $\beta \in \Zp^\times$.  Then
\begin{align*}
\left(\alpha^{\oab}\right)^{p^s}  &= (1+p^{\voa} \beta)^{p^s} \\
&= 1 + p^{\voa+s}\beta + \cdots
\end{align*}
which proves the first statement.  The second statement follows immediately since $\alpha^k=1$ in $\Zpns$ if and only if $v(\alpha^k-1) \geq n$.
\end{proof}

\begin{prop} \label{prop:main}
For $g \in G$, the double coset type of $\left< g \right> \backslash G / \Gamma$ depends only on the conjugacy class of $g$ and is given in Tables~\ref{t:dcp2} and ~\ref{t:dcp}.
\end{prop}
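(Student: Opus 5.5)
The plan is to compute, for each conjugacy class representative $g$ in Table~\ref{t:cc}, the lengths $\ell_k^0(g)$ and $\ell_k^1(g)$ as functions of $k$, form $\lt_k(g)=p^{\ell_k^0(g)}-p^{\ell_k^1(g)}$, and recover $\lambda_k(g)$ by M\"obius inversion. Formula (\ref{eq:lkg}) then gives the double coset type. The dependence only on the conjugacy class is immediate from Lemma~\ref{l:myst}, since $\left<hgh^{-1}\right>=h\left<g\right>h^{-1}$. It is also visible directly, because $\ell_k^t$ is a conjugation invariant.

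The first step is to compute $g^k$ for each representative.

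For $\text{I}(\alpha)$, $g^k-1=\alpha^k-1$ is scalar. Its kernel on $p^tV$ has length $2\min\{v(\alpha^k-1),n-t\}$, and Lemma~\ref{lemma:val} evaluates this explicitly in terms of $\oab$, $\voa$ and $v(k)$.

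For $\text{III}(\alpha,\beta)$, $g^k-1$ is diagonal. The length is the sum of the two one-dimensional contributions, each governed by Lemma~\ref{lemma:val} applied to $\alpha$ and to $\beta$.

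For $\text{IV}$, I would work over the unramified quadratic extension $\Z_{p^2}$, where $g$ diagonalizes with conjugate eigenvalues $\varepsilon,\bar\varepsilon$. Then $g^k-1$ has both eigenvalues of valuation $v(\varepsilon^k-1)$, and an analogue of Lemma~\ref{lemma:val} for $\varepsilon$ in $\Z_{p^2}^\times$ handles it, with $\oe$ replacing $\oa$.

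The irregular classes and class $\text{II}$ are the heart of the computation. I would write
$$g=\alpha(1+p^\mu X),\qquad X=\mat{0}{\beta' p^{\nu-\mu}}{\alpha^{-1}}{0},$$
with the appropriate specialization of $\mu$ and $\nu$ for each class ($\mu=0$, $\nu=1$ for $\text{II}$). The commuting ring $\Zp[X]$ is an order in $\Qp(\sqrt{\beta p^{\nu-\mu}})$, or in $\Qp\times\Qp$ in the $+$ case. Then $g^k-1$ lies in the rank two order $\Zp[X]$, and its kernel length on $p^tV$ is determined by its elementary divisors. For an element $a+bX$ of this order, the elementary divisors are $p^{\min(v(a),v(b))}$ and $p^{v(\det)-\min(v(a),v(b))}$.

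So I need $v(a_k)$, $v(b_k)$ and $v(\det(g^k-1))$ for $g^k-1=a_k+b_kX$. These come from binomially expanding $\alpha^k(1+p^\mu X)^k$ and using $X^2=\beta p^{\nu-\mu}\alpha^{-2}$. In the $\mathrm{I}^\pm$ and $\mathrm{II}$ cases, $v(\det)$ is a norm of $\varepsilon^k-1$ in a quadratic extension, which is ramified, split or inert according to the class.

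This expansion step is the main obstacle. When $\oab\mid k$, the valuations of $a_k$ and $b_k$ involve a competition between $v(\alpha^k-1)$ and $\mu+v(k)$, including the possibility of cancellation between the scalar part and the $X^2$ term. Quadratic behaviour (Newton-polygon-like phenomena) appears in the ramified case. I would first handle $\oab\nmid k$, where $g^k-1$ is invertible modulo $p$ in the scalar part, so $\ell_k^t=0$ and $\lambda_k=0$. Then, for $\oab\mid k$, I would write $k=\oab m$ and replace $g$ by $g^{\oab}$, reducing to the case $\alpha\equiv1\pmod p$. In that case $g=1+(\text{something of valuation } \min(\voa,\mu))$, and the logarithm or $p$-adic binomial expansion gives $v(g^{pm'}-1)=v(g^{m'}-1)+1$ once the valuation is positive. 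This mirrors Lemma~\ref{lemma:val} and yields $\ell_k^t$ as piecewise linear in $v(k)$, truncated at $n-t$.

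Finally, M\"obius inversion collapses, because $\lt_k$ depends on $k$ only through whether $\oab\mid k$ (or $\oe$, or both eigenvalue orders in case III) and through $v(k)$. This gives orbit sizes of the form $\oab p^j$, or the LCM of two such sizes in case III, with explicit multiplicities. Those multiplicities are then matched against the entries of Tables~\ref{t:dcp2} and~\ref{t:dcp}.
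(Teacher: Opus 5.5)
Your framework matches the paper's: conjugation invariance via Lemma~\ref{l:myst}, Smith normal forms of $g^k-1$ feeding $\ell_k^t$, M\"obius inversion, and classes $\mathrm{I}$, $\mathrm{III}$, $\mathrm{IV}$ as you describe. Your reduction to $g^{o_{\bar\alpha}}$ is also sound for the irregular classes, but state it as a matrix identity rather than a single valuation. If $g^m\equiv 1\pmod p$ and $p$ is odd, then $g^{pm}-1=p(g^m-1)U$ with $U\equiv 1\pmod p$, so both elementary divisors rise by one. This neatly replaces the Newton-polygon control of the $v(k)$-dependence.

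The gap is the base case. You never determine the second elementary divisor of $g^{o_{\bar\alpha}}-1$, i.e.\ $v\bigl(\det(g^{o_{\bar\alpha}}-1)\bigr)$. In classes $\mathrm{I}'_{\mu,\nu}$ with $\nu=2v_\alpha-\mu$, $\mathrm{I}^\pm_\mu$ with $v_\alpha=\mu$, and $\mathrm{II}$ with $v(p\beta)=2v_\alpha$, the leading terms cancel. There this valuation is not a function of $v_\alpha,\mu,\nu,v(\beta)$. That is exactly what Table~\ref{t:dcp2} encodes through $u_1,u_2,u_3$, each of the form $v(\beta p^\nu-z^\mu(\alpha))$. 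You flag this cancellation as the main obstacle, but your remedy only carries information from $k=o_{\bar\alpha}$ to $k=o_{\bar\alpha}p^j$. Nothing in your outline produces $z^\mu(\alpha)$, so ``matching against the tables'' is unsupported precisely where the proposition has content.

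Class $\mathrm{II}$ has a second hole. There the first elementary divisor at $k=o_{\bar\alpha}$ is $1$, so your ``$+1$ once positive'' rule does not cover the step to $k=po_{\bar\alpha}$. For $p=3$ and $\beta$ a unit that step is genuinely exceptional. Writing $g^{o_{\bar\alpha}}=1+Y$ gives $g^{3o_{\bar\alpha}}-1=3Y(1+Y+Y^2/3)$, and the last factor need not be invertible. For instance $g=\mat{1}{6}{1}{1}$ has $g-1$ with Smith form $\mathrm{diag}(1,3)$, but $g^3-1=\mat{18}{54}{9}{18}$ has Smith form $\mathrm{diag}(9,9)$. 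This is the source of the $u_4$ rows, such as $\mathrm{DCT}(o_{\bar\alpha};1)$, which your outline does not see.

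What you need is an analogue of Lemma~\ref{l:super} and Corollary~\ref{cor:val}, and your eigenvalue idea supplies one quickly. For $g=\mat{\alpha}{\beta p^\nu}{p^\mu}{\alpha}$, work over $\Z_p[\zeta_{o_{\bar\alpha}}]$. Then $g^{o_{\bar\alpha}}-1=\prod_{\zeta}(g-\zeta)$ over the $o_{\bar\alpha}$-th roots of unity, with $\det(g-\zeta)=(\alpha-\zeta)^2-\beta p^{\mu+\nu}$. This determinant is a unit unless $\zeta=\omega(\alpha)$, the Teichm\"uller lift of $\bar\alpha$. Hence $v\bigl(\det(g^{o_{\bar\alpha}}-1)\bigr)=\mu+v\bigl(\beta p^\nu-(\alpha-\omega(\alpha))^2/p^\mu\bigr)$, and $(\alpha-\omega(\alpha))^2/p^\mu$ is precisely $z^\mu(\alpha)$.

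For general $k$, include the roots $\omega(\alpha)\xi$ with $\xi$ of $p$-power order. A primitive $p^i$-th root $\xi$ contributes $2/(p^{i-1}(p-1))$ whenever this is smaller than $v(\beta p^{\mu+\nu})$, which gives the extra $2v(k)$. That condition fails only for $p=3$, $i=1$, $\mu=0$, $v(\beta)=0$, the situation of Lemma~\ref{l:val3}. In that case write $g^k-1=\mat{r-1}{3\beta s}{s}{r-1}$. The two terms of $(r-1)^2-3\beta s^2$ have valuations of opposite parity, so the first elementary divisor can be read off from the determinant, as the paper does. With these additions your outline would become a complete proof.
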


\begin{rmk} $u_1,u_2,u_3$ are valuations in terms of certain $p$-adic numbers $\zkm(\alpha)$ defined in Lemma~\ref{l:super}.  The valuation $v(\zkm(\alpha))$ equals $2\voa-\mu$, so that
in most cases $v_i$ is clear without computing $\zkm(\alpha)$.
\begin{enumerate}
\item $u_1 = v(p^\nu\beta-\zkm(\alpha))$ so that
$$u_1 = \begin{cases} \nu & \nu < 2\voa-\mu; \\
2\voa-\mu & \nu > 2\voa-\mu. \end{cases}$$
\item $u_2 = v(p^\mu \beta - \zkm(\alpha))$ so that
$$u_2 = \begin{cases} \mu & \mu < \voa ; \\
2\voa-\mu & \mu >\voa. \end{cases}$$
\item $u_3 = v(p\beta - z^0(\alpha))$ so that
$$u_3 = \begin{cases} v(\beta)+1 & v(\beta) < 2\voa-1; \\
2\voa & v(\beta) > 2\voa-1. \end{cases}$$
\item $u_4 = \min\bigl\{2v(6\beta-2\alpha^2-2\varepsilon \alpha+1)-1,
2v(2\alpha+\varepsilon),2n-2\bigr\}$ with $\varepsilon=1$ (resp.\ $-1$) if $\alpha \equiv 1 \pmod{3}$
(resp.\ $\alpha \equiv 2 \pmod{3}$).
\end{enumerate}
\end{rmk}

\begin{rmk}
In case $\text{IV}$, $o_{\bar{g}}p^{v_g}$ equals the order of any element of the conjugacy class, with $p \nmid o_{\bar{g}}$.
\end{rmk}

\begin{table} 
\renewcommand{\arraystretch}{2}
$$\begin{array}{|l|l|l|} \hline
\text{Class} & \text{Condition} & \text{type} \\ \hline
\text{I}(\alpha) & & \dcp(\oab;\voa) \\ \hline

\text{I}'_{\mu,\nu}(\alpha,\beta) & \voa \leq \mu & \dcp(\oab;\voa,u_1+\mu-\voa) \\
& \voa \geq \mu & \dcp(\oab;\mu,u_1) \\ \hline

\text{I}'_\mu(\alpha) & \voa \leq \mu & \dcp(\oab;\voa) \\
& \voa \geq \mu & \dcp(\oab;\mu,2\voa-\mu) \\ \hline

\text{I}^\pm_\mu(\alpha,\beta) & \voa \neq \mu & \dcp(\oab;\min\{\voa,\mu\}) \\
& \voa = \mu & \dcp(\oab;\mu,u_2) \\ \hline

\text{II}(\alpha,\beta) & p \neq 3 \text{~or~} v(\beta) > 0& \dcp(\oab;0,u_3) \\ 
 & p = 3,  v(\beta)=0, 2 \mid u_4 &
\dcp(\oab;\frac{u_4}{2}) \\ 
 & p = 3,   v(\beta)=0, 2 \nmid u_4  &
\dcp(\oab;\frac{u_4-1}{2},\frac{u_4+1}{2}) \\
\hline

\text{III}(\alpha,\beta) & \oab = \obb, \voa=\vob & \dcp(\oab;\voa) \\
& \oab = \obb, \voa < \vob & \dcp(\oab;\voa,\vob) \\
& \oab \neq \obb, \voa \leq \vob & \dcp(\oab,\obb;\voa,\vob) \\ \hline

\text{IV}(\alpha,\beta) & & \dcp(o_{\bar{g}};v_g) \\ \hline
\end{array}$$
\renewcommand{\arraystretch}{1}
\caption{Double coset types by conjugacy class} \label{t:dcp2}
\end{table}

\begin{table} 
\renewcommand{\arraystretch}{2}
$$\begin{array}{|l|l|l|}  \hline
& \text{Condition} & \text{Double Coset type} \\ \hline
\dcp(k_0;a) & & \frac{(p^2-1)p^{n+a-2}}{k_0} \times k_0 p^{n-a} \\ \hline
\dcp(k_0;a,b) & a < b & \frac{(p-1)p^{n+a-1}}{k_0} \times k_0 p^{n-b} + {\displaystyle \sum_{u=n-b+1}^{n-a-1}}\frac{(p-1)^2p^{n+a-2}}{k_0}  \times k_0 p^{u} +\\
& &  \frac{(p-1)p^{n+a-1}}{k_0} \times k_0 p^{n-a} \\ \hline
\dcp(k_1,k_2;a) & k_1 \mid k_2 & \frac{(p-1)p^{a-1}}{k_1} \times k_1p^{n-a} + \frac{(p-1)(p^n+p^{n-1}-1)p^{a-1}}{k_2} \times k_2p^{n-a} \\ \hline
\dcp(k_1,k_2;a,b) & k_1 \mid k_2 & \frac{(p-1)p^{a-1}}{k_1} \times k_1 p^{n-a} + \frac{(p-1)p^{n+a-1}}{k_2} \times k_2p^{n-b} + \\
& a < b & {\displaystyle \sum_{u=n-b+1}^{n-a-1}} \frac{(p-1)^2p^{n+a-2}}{k_2} \times k_2 p^u + 
\frac{(p-1)(p^n-1)p^{a-1}}{k_2} \times k_2 p^{n-a} \\ \hline
\dcp(k_1,k_2;a) & k_2 \mid k_1 & \frac{(p-1)p^{a-1}}{k_2} \times k_2p^{n-a} + \frac{(p-1)(p^n+p^{n-1}-1)p^{a-1}}{k_1} \times k_1p^{n-a} \\ \hline
\dcp(k_1,k_2;a,b) & k_2 \mid k_1 &\frac{(p-1)p^{a-1}}{k_2} \times k_2p^{n-a} + \frac{(p-1)(p^{n-b+a}-1)p^{b-1}}{k_1} \times k_1p^{n-b} + \\
& a < b & {\displaystyle \sum_{u=n-b+1}^{n-a-1}} \frac{(p-1)^2p^{n+a-2}}{k_1} \times k_1 p^u +
\frac{(p-1)(p^n+p^{n-1}-1)p^{a-1}}{k_1} \times k_1 p^{n-a} \\ \hline
\dcp(k_1,k_2;a) & k_1 \nmid k_2 & \frac{(p-1)p^{a-1}}{k_1} \times k_1p^{n-a} + \frac{(p-1)p^{a-1}}{k_2} \times k_2p^{n-a} + \\
& k_2 \nmid k_1 & \frac{(p-1)(p^n+p^{n-1}-2)p^{a-1}}{k_3} \times k_3p^{n-a} \\ \hline
\dcp(k_1,k_2;a,b) & k_1 \nmid k_2 & \frac{(p-1)p^{a-1}}{k_1} \times k_1p^{n-a} + \frac{(p-1)p^{b-1}}{k_2} \times k_2p^{n-b} + \\
& k_2 \nmid k_1 & \frac{(p-1)(p^{n-b+a}-1)p^{b-1}}{k_3} \times k_3p^{n-b} + 
{\displaystyle \sum_{u=n-b+1}^{n-a-1}} \frac{(p-1)^2p^{n+a-2}}{k_3} \times k_3p^u + \\
& a < b &\frac{(p-1)(p^n-1)p^{a-1}}{k_3} \times k_3 p^{n-a} \\ \hline
\end{array}$$
\renewcommand{\arraystretch}{1}
\caption{Standard double coset types} \label{t:dcp}
\end{table}

The proof is primarily a long exercise in arithmetic functions, Smith normal forms and Newton polygons.  We give the details in the succeeding sections.   Recall that the {\it Smith normal form} of a matrix
 over $\Zp$ is a diagonal matrix with powers of $p$ on the diagonal which can be obtained via (independent) row and column operations.  In the case of $2 \times 2$ matrices, there is no need to consider the actual operations: the 
Smith normal form of $\mat{a}{b}{c}{d}$ is simply
\begin{equation} \label{eq:smith}
\mat{p^{\min\{v(a),v(b),v(c),v(d)\}}} {0}{0}{p^{v(ad-bc)-\min\{v(a),v(b),v(c),v(d)\}}}.
\end{equation}
Furthermore, the exponent in the upper left will always be less than or equal to that in the lower right.
The same discussion holds over $\Z/p^n$ with the obvious modifications.

Note that if $g^k-1$ has Smith normal form
$$\mat{p^{a}}{0}{0}{p^{b}},$$
then we have
\begin{align*}
\ell_k^0(g) &= p^{\max\{a,n\}+\max\{b,n\}} \\
\ell_k^1(g) &=  p^{\max\{a,n-1\}+\max\{b,n-1\}}.
\end{align*}
In particular, $\lt_k(g)=0$ unless at least one of $a,b$ equals $n$.  We will study arithmetic functions of this form in Section~\ref{ss:af}.

\subsection{Examples}

Take $p=5$ and $n=4$.  Consider first the matrix
$$\mat{32}{20}{5}{32}.$$
This lies in the conjugacy class $\text{I}^+_1(32,4)$ of order $18750$.
We have $\oab = 4$ and $\voa = 4-v(\oa)=4-v(100)=2$.  Therefore the double coset type is
$$\dcp(4;\min\{2,1\}) = \dcp(4;1) = 750 \times 500.$$

Consider next
$$\mat{2}{20}{5}{2}$$
in the conjugacy class $\text{I}^+_1(2,4).$
This time $\oab=4$ and $\voa=4-v(\oa)=4-v(500)=1$.  Since $\voa = \mu$, we are in the case where we must compute
the $5$-adic integer $z^1(2)$ in order to compute the double coset type.  It is the root of largest valuation of 
$$t_4^1(2,x) = 625x^4-2000x^3+2350x^2-1520x+225$$
and one finds that
$$z^1(2) = 5 + 4 \cdot 5^2 + 1 \cdot 5^3 + \cdots.$$
(In fact, it is easily verified that $\frac{1}{5}$ and $\frac{9}{5}$ are roots of 
$t_4^1(2,x)$, so that $z^1(2)$ is a root of the quotient $25x^2-30x+25$; thus
$$z^1(2) = \frac{3}{5} \pm \frac{4}{5}\sqrt{-1}.$$
Keep in mind that, depending on the choice of $\sqrt{-1}$, one choice of sign gives the desired $z^1(2)$ of valuation 1 while the other gives an element of valuation $-1$.)
Thus 
\begin{align*}
u_2 &= v\bigl(p^\mu \beta - z^1(2)\bigr) \\
&= v\bigl(20-z^1(2)\bigr) \\
&= v\bigl(3 \cdot 5 + 1 \cdot 5^2 + \cdots\bigr) \\
&= 1
\end{align*}
and the double coset type is still
$$\dcp(4;1,1) = \dcp(4;1) = 750 \times 500.$$

However, if we have 
$$\mat{2}{5}{5}{2}$$
so that $\beta=1$, we instead find that
\begin{align*}
u_2 &= v\bigl(p^\mu \beta - z^1(2)\bigr) \\
&= v\bigl(5-z^1(2)\bigr) \\
&= v\bigl(5^2 + 3 \cdot 5^3 + \cdots\bigr) \\
&= 2
\end{align*}
resulting in a double coset type of
$$\dcp(4;1,2) = 625 \times 100 + 625 \times 500.$$

As $p\beta$ becomes closer to $z^1(2)$, the double coset type gains double cosets of smaller order.  The matrix
$$\mat{2}{105}{5}{2}$$
with $\beta = 21$ has $u_2=3$ and thus type
$$\dcp(4;1,3) = 625 \times 20 + 500 \times 100 + 625 \times 500.$$
We can go one more step: taking $\beta=46$, we have $u_2 = 4$ so that the matrix
$$\mat{2}{230}{5}{2}$$
has double coset type
$$\dcp(4;1,4) = 625 \times 4 + 500 \times 20 + 500 \times 100 + 625 \times 500.$$

\subsection{Arithmetic functions} \label{ss:af}

We will encounter only  a limited collection of double coset types.  In this section we work out those standard types.  We need work here only with the arithmetic functions, so assume we are given
arithmetic functions
$$\ell_{\cdot}^0, \hspace{.5em} \ell_{\cdot}^1 : \N \to \N.$$
We then define two more arithmetic functions
$$\lt_{\cdot}, \hspace{0.5em} \lambda_{\cdot} : \N \to \N$$
by
$$\lt_k = p^{\ell_k^0} - p^{\ell_k^1}$$
and 
$$\lambda_k = \sum_{d \mid k} \mu(d)\lt_{\frac{k}{d}}.$$

We are interested in converting the expressions we will encounter for $\ell^0$ and $\ell^1$ to expressions for $\lambda$.
We will most often find ourselves in the situation of the next lemma; in terms of Smith normal forms, this would correspond to the case that
$g^k-1$ has Smith normal form
$$\mat{p^{v(k)+a}}{0}{0}{p^{v(k)+b}}$$
for some $a,b$ and all $k$ divisible by a fixed $k_0$.

 For an integer $k_0$, let
$e_{k_0}(k)$ denote the function which is $1$ if $k_0 \mid k$ and $0$ if $k_0 \nmid k$.

\begin{lemma} \label{lemma:mu}
Suppose that there is $k_0$ relatively prime to $p$ and integers
$$0 \leq a \leq b \leq n$$
such that
\begin{align*}
\ell_k^0 &= e_{k_0}(k) \cdot \bigl(
\min\{v(k)+a,n\} + \min\{v(k)+b,n\} \bigr); \\
\ell_k^1 &= e_{k_0}(k) \cdot \bigl(
\min\{v(k)+a,n-1\} + \min\{v(k)+b,n-1\} \bigr).
\end{align*}
If $a = b$, then
$$\lambda_k = \begin{cases} 0 & k \neq k_0 p^{n-a}; \\
p^{2n}-p^{2n-2} & k = k_0 p^{n-a}.
\end{cases}$$
If $a < b$, then
$$\lambda_k = \begin{cases} 0 & k \neq k_0 p^u  \text{~with~} n-b \leq u \leq n-a \\
(p-1)p^{2n+a-b-1} & k = k_0 p^{n-b}; \\
(p-1)^2 p^{n+u+a-2} & k = k_0p^u \text{~with~} n-b < u < n-a; \\
(p-1)p^{2n-1} & k = k_0 p^{n-a}.
\end{cases}$$
\end{lemma}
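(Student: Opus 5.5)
The plan is to compute $\lt_k$ explicitly from the hypotheses and then carry out the M\"obius inversion by hand. The key structural fact is that $\lt_k$ depends only on whether $k_0 \mid k$ and on $v(k)$. This lets the M\"obius sum collapse to at most two terms.

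First, the collapse. If $k_0 \nmid k$, then $\ell_k^0=\ell_k^1=0$, so $\lt_k = 1-1 = 0$. Now take $k$ with $\lambda_k = \sum_{d\mid k}\mu(d)\lt_{k/d}$. Only squarefree $d$ with $k_0 \mid k/d$ contribute.

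Suppose $k/k_0$ has a prime factor $\ell \neq p$. For squarefree $d \mid k$ with $\ell \nmid d$, the terms for $d$ and $\ell d$ have opposite M\"obius signs. They also have equal $\lt$: $v(k/d)=v(k/\ell d)$, and $k_0 \mid k/d$ iff $k_0 \mid k/(\ell d)$ because $\ell \mid k/k_0$. So these terms cancel in pairs and $\lambda_k=0$.

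Hence $\lambda_k=0$ unless $k = k_0 p^s$. In that case the only squarefree $d \mid k$ with $k_0 \mid k/d$ are $d=1$ and $d=p$ (the latter only when $s\ge1$), since $p\nmid k_0$. We obtain
$$\lambda_{k_0p^s} = f(s) - f(s-1), \qquad f(s) := \lt_{k_0p^s},\quad f(-1):=0.$$

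Next, compute $f(s)$ directly from the formulas for $\ell^0,\ell^1$:
$$f(s) = p^{\min\{s+a,n\}+\min\{s+b,n\}} - p^{\min\{s+a,n-1\}+\min\{s+b,n-1\}}.$$
Since $a\le b$, split according to the position of $s$ relative to $n-b$ and $n-a$.

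If $s<n-b$, both exponents agree, so $f(s)=0$.

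If $n-b\le s< n-a$, then $f(s)=p^{s+a+n}-p^{s+a+n-1}=(p-1)p^{n+s+a-1}$.

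If $s\ge n-a$, then $f(s)=p^{2n}-p^{2n-2}$, which is constant in $s$.

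When $a=b$ the middle range is empty. So $f$ jumps from $0$ to $p^{2n}-p^{2n-2}$ exactly at $s=n-a$, which gives the first claim.

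When $a<b$, take differences:
\begin{align*}
\lambda_{k_0p^{n-b}} &= (p-1)p^{2n+a-b-1}, \\
\lambda_{k_0p^{u}} &= (p-1)p^{n+u+a-1}-(p-1)p^{n+u+a-2} = (p-1)^2p^{n+u+a-2} \quad (n-b<u<n-a), \\
\lambda_{k_0p^{n-a}} &= p^{2n}-p^{2n-2}-(p-1)p^{2n-2} = (p-1)p^{2n-1}.
\end{align*}
For $s>n-a$ the difference vanishes because $f$ is constant there.

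The only real obstacle is the bookkeeping in the collapse step: making sure the pairing argument for primes $\ell\ne p$ correctly handles primes dividing $k_0$. If $\ell \mid k_0$ but $\ell \nmid k/k_0$, removing $\ell$ destroys divisibility by $k_0$, so those $d$ contribute $0$ anyway. The second subtle point is the edge case $s=0$, where only $d=1$ contributes. This is consistent with $f(-1)=0$ and matters precisely when $b=n$ or $a=n$.
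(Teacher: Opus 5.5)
Your proof is correct and follows essentially the same route as the paper: compute $\lt_k$ piecewise from the hypotheses (zero unless $k_0 \mid k$, otherwise depending only on $v(k)$), collapse the M\"obius sum to $\lambda_{k_0p^u} = \lt_{k_0p^u} - \lt_{k_0p^{u-1}}$, and take differences. The differences are cosmetic. You kill the terms with $k \neq k_0 p^u$ by pairing $d$ with $\ell d$, whereas the paper writes $k = k_0 k' p^u$ and uses $\sum_{d \mid k'} \mu(d) = 0$ for $k' \neq 1$. You also treat the $a=b$ case and the boundary $u=0$ (via $f(-1)=0$) more explicitly than the paper does.
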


\begin{proof}
Assume first that $a=b$.  Then $\ell_k^0=\ell_k^1$ for all $k < k_0 p^{n-a}$, while $\ell_{k_0 p^{n-a}}^0= 2n$ and $\ell_{k_0 p^{n-a}}^{1}=2n-2$.  Thus
$$
\lt_k= \begin{cases} 0 & k < k_0 p^{n-a}; \\ p^{2n}-p^{2n-2} & k=k_0 p^{n-a}.
\end{cases}$$
The asserted formula for $\lambda_k$ in this case follows immediately.

Assume then that $a<b$.  
Note that $\ell_k^0 = \ell_k^1$ (and thus $\lt_k=0$) unless $v(k) +b \geq n$.
Therefore
$$\lt_k = \begin{cases} 0 & k_0 \nmid k \text{~or~} v(k) < n-b \\
p^{n+v(k)+a}-p^{n+v(k)+a-1} & k_0 \mid k, n-b \leq v(k) < n-a \\
p^{2n}-p^{2n-2} & k_0 \mid k, n-a \leq v(k). \end{cases}$$
It follows immediately from M\"obius inversion that $\lambda_k=0$ unless $k_0 \mid k$.

Fix now $k$ divisible by $k_0$ and write $k=k_0 k' p^u$ with $p \nmid k'$.  We compute
\begin{align*}
\lambda_k &= \sum_{d \mid k_0k' p^u} \mu(d)\lt_{k/d}\\
&= \sum_{d \mid k' p^u} \mu(d)\lt_{k/d}\\ \intertext{(since $\lt_k = 0$ if $k_0 \nmid k$)}
&= \sum_{d \mid k'} \mu(d)\lt_{k/d} - \sum_{d \mid k'} \mu(d)\lt_{k/pd} \\
&= \left( \sum_{d \mid k'} \mu(d) \right) \lt_{k} - \left( \sum_{d \mid k'} \mu(d) \right) \lt_{k/p} \\ \intertext{(since $\lt$ is constant in each sum)}
&= \begin{cases} 0 & k' \neq 1 \\
\lt_{k} - \lt_{k/p} & k' = 1
\end{cases}
\end{align*}
by a standard property of the M\"obius function.  Therefore $\lambda_k=0 $ for $k \neq k_0 p^{u}$ with $n-b \leq u \leq n-a$, while the formulae for the remaining $u$ follow easily from the formula for
$\lt_k$.  For example, if $n-b < u < n-a$, then
\begin{align*}
\lambda_{k_0p^u} &= \lt_{k_0 p^u}- \lt_{k_0 p^{u-1}}\\
&= p^{n+u+a}-p^{n+u+a-1}-p^{n+u+a-1}+p^{n+u+a-2} \\
&= (p-1)^2p^{n+u+a-2}.
\end{align*}
\end{proof}

\begin{cor} \label{cor:dcp}
Suppose there is $k_0$ relatively prime to $p$ and $a \leq b$ such that $g^k-1$ has Smith normal form the identity if $k_0 \nmid k$ and
$$\mat{p^{v(k)+a}}{0}{0}{p^{v(k)+b}}$$
if $k_0 \mid k$.
If $a=b$, then the double coset type of $g$ is that given in Table~\ref{t:dcp} as $\dcp(k_0;a)$.
If $a < b$, then the double coset type of $g$ is that given in Table~\ref{t:dcp} as $\dcp(k_0;a,b)$.
\end{cor}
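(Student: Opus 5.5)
This follows by translating the Smith normal form hypothesis into the functions $\ell_k^0,\ell_k^1$, applying Lemma~\ref{lemma:mu}, and then converting the resulting $\lambda_k$ into a double coset type via (\ref{eq:lkg}).

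\emph{Step 1: kernel lengths.} If $g^k-1$ has Smith normal form $\mathrm{diag}(p^{\alpha},p^{\beta})$ over $\Zpn$, then $g^k-1$ is equivalent to that diagonal matrix by invertible row and column operations. The kernel on $V$ therefore has length $\min\{\alpha,n\}+\min\{\beta,n\}$, and the kernel on $pV$ has length $\min\{\alpha,n-1\}+\min\{\beta,n-1\}$. (Note that the displayed formula just before \S\ref{ss:af} should read $\min$ rather than $\max$; this is the intended meaning.)

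If $k_0\nmid k$, the Smith normal form is the identity, so both lengths are $0$. If $k_0\mid k$, we get
$$\ell_k^0=\min\{v(k)+a,n\}+\min\{v(k)+b,n\},\qquad \ell_k^1=\min\{v(k)+a,n-1\}+\min\{v(k)+b,n-1\}.$$
This is exactly the hypothesis of Lemma~\ref{lemma:mu}, with $0\le a\le b$. We may assume $b\le n$: the valuations are only meaningful up to $n$ in $\Zpn$, so capping $b$ at $n$ changes nothing.

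\emph{Step 2: apply Lemma~\ref{lemma:mu}.} Since $\lambda_k=\lambda_k(g)$ counts elements of $W$ in orbits of order exactly $k$, and the double coset type is $\sum_k \frac{\lambda_k}{k}\times k$, we divide each nonzero $\lambda_k$ by $k$.

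\emph{Case $a=b$.} The only nonzero value is at $k=k_0p^{n-a}$, where
$$\frac{p^{2n}-p^{2n-2}}{k_0p^{n-a}}=\frac{(p^2-1)p^{n+a-2}}{k_0}.$$
This matches the entry $\dcp(k_0;a)$.

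\emph{Case $a<b$.} There are three kinds of nonzero terms.
\begin{itemize}
\item At $k=k_0p^{n-b}$: $\dfrac{(p-1)p^{2n+a-b-1}}{k_0p^{n-b}}=\dfrac{(p-1)p^{n+a-1}}{k_0}$.
\item At $k=k_0p^{u}$ with $n-b<u<n-a$: $\dfrac{(p-1)^2p^{n+u+a-2}}{k_0p^u}=\dfrac{(p-1)^2p^{n+a-2}}{k_0}$.
\item At $k=k_0p^{n-a}$: $\dfrac{(p-1)p^{2n-1}}{k_0p^{n-a}}=\dfrac{(p-1)p^{n+a-1}}{k_0}$.
\end{itemize}
These reproduce the three summands of $\dcp(k_0;a,b)$.

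\emph{Main obstacle.} The only delicate point is the bookkeeping in Step 1, namely confirming that Smith normal form exponents translate into kernel lengths in the truncated form required by Lemma~\ref{lemma:mu}, including on the sublattice $pV$. This holds because $pV\cong(\Z/p^{n-1})^2$ and $g^k-1$ acts on it with the same diagonal form. One must also check that the boundary case $b=n$ fits the lemma's range, and it does. The rest is routine arithmetic.
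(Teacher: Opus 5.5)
Your proposal is correct and follows the paper's proof: translate the Smith normal form hypothesis into $\ell_k^0,\ell_k^1$, invoke Lemma~\ref{lemma:mu}, and divide each $\lambda_k$ by $k$ via (\ref{eq:lkg}). You are right that the paper's displayed kernel-length formula should use $\min$ rather than $\max$. Your explicit checks of the kernel length on $pV$, of the cap $b\le n$, and of the division by $k$ fill in details the paper leaves as ``immediate.''
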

\begin{proof}
Restricting to $k$ divisible by $k_0$, we have
\begin{align*}
\ell_k^0(g) &= \min\{v(k)+a,n\} + \min\{v(k)+b,n\};\\
\ell_k^1(g) &= \min\{v(k)+a,n-1\} + \min\{v(k)+b,n-1\}.
\end{align*}
The corollary therefore follows immediately from Lemma~\ref{lemma:mu} and (\ref{eq:lkg}).
\end{proof}

Unfortunately we will also have need of the following more complicated version. 

\begin{lemma} \label{lemma:diag}
Suppose that there are $k_1$, $k_2$, distinct and relatively prime to $p$, and integers
$0 \leq a \leq b \leq n$
such that
\begin{itemize}
\item $\ell_k^0 = e_{k_1}(k)\min\{v(k)+a,n\} + e_{k_2}(k)\min\{v(k)+b,n\}$
\item $\ell_k^1 = e_{k_1}(k)\min\{v(k)+a,n-1\} + e_{k_2}(k)\min\{v(k)+b,n-1\}$
\end{itemize}
Let $k_3$ denote the least common multiple of $k_1$ and $k_2$.
Then the double coset types corresponding to the associated function
$\lambda$ via (\ref{eq:lkg}) are as given in Table~\ref{t:dcp}.
\end{lemma}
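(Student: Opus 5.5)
The plan is to reduce everything to the mechanism already used in the proof of Lemma~\ref{lemma:mu}: if an arithmetic function has the form $e_{k_0}(k)\,F(v(k))$ with $p \nmid k_0$, then its M\"obius inversion vanishes except at $k=k_0p^u$, where it equals $F(u)-F(u-1)$ (with $F(-1)=0$). That proof in fact establishes this for any $F$. So I will write $\lt_k$ as a sum of three such functions, supported on multiples of $k_1$, $k_2$ and $k_3$, and invert each term separately.

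To set up the decomposition, write $v=v(k)$ and put
$$X=p^{\min\{v+a,n\}},\quad X'=p^{\min\{v+a,n-1\}},\quad Y=p^{\min\{v+b,n\}},\quad Y'=p^{\min\{v+b,n-1\}}.$$
The hypotheses give $\lt_k = p^{e_{k_1}\log_p X + e_{k_2}\log_p Y}-p^{e_{k_1}\log_p X'+e_{k_2}\log_p Y'}$, which I will rewrite case by case in $e_{k_1},e_{k_2}$. Since $k_3=\LCM(k_1,k_2)$, we have $e_{k_1}e_{k_2}=e_{k_3}$. Hence
$$\lt_k = e_{k_1}(X-X') + e_{k_2}(Y-Y') + e_{k_3}\bigl(XY-X'Y'-(X-X')-(Y-Y')\bigr),$$
and each coefficient depends only on $v(k)$.

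Now invert term by term. The difference $X-X'$ is $0$ for $v<n-a$ and equals $p^n-p^{n-1}$ for $v\geq n-a$. So the first term contributes $\lambda_{k_1p^{n-a}}=(p-1)p^{n-1}$, i.e.\ $\frac{(p-1)p^{a-1}}{k_1}\times k_1p^{n-a}$ by (\ref{eq:lkg}). This matches the first summand in every row of Table~\ref{t:dcp}. Likewise the second term contributes $\frac{(p-1)p^{b-1}}{k_2}\times k_2p^{n-b}$.

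For the third term, set $H(v)=XY-X'Y'-(X-X')-(Y-Y')$ and compute it on the ranges
$$v<n-b,\qquad n-b\le v<n-a,\qquad v\ge n-a.$$
On these ranges $H$ takes the values
$$0,\qquad p^{n+v+a}-p^{n+v+a-1}-p^n+p^{n-1},\qquad p^{2n}-p^{2n-2}-2(p^n-p^{n-1}).$$
Taking first differences $H(u)-H(u-1)$ at $u=n-b$, at $n-b<u<n-a$ and at $u=n-a$ gives the $k_3$ contributions. The first two reproduce the terms $\frac{(p-1)(p^{n-b+a}-1)p^{b-1}}{k_3}\times k_3p^{n-b}$ and the sum over $u$ of $\frac{(p-1)^2p^{n+a-2}}{k_3}\times k_3p^u$. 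The last reproduces $\frac{(p-1)(p^n-1)p^{a-1}}{k_3}\times k_3p^{n-a}$. The degenerate case $a=b$ is handled by the same computation with the middle range empty.

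The remaining step, and the one I expect to be the main obstacle, is the bookkeeping when $k_3$ coincides with $k_1$ or $k_2$. If $k_1\mid k_2$ then $k_3=k_2$, and the $e_{k_2}$ and $e_{k_3}$ contributions must be merged. For example, at $k_2p^{n-b}$ the counts $(p-1)p^{b-1}$ and $(p-1)(p^{n-b+a}-1)p^{b-1}$ combine to $(p-1)p^{n+a-1}$. Symmetrically, if $k_2\mid k_1$ then the $k_1$ and $k_3$ terms merge at $k_1p^{n-a}$ to give $(p-1)(p^n+p^{n-1}-1)p^{a-1}$. When $a=b$ the two boundary terms coincide and collapse to the single-term rows $\dcp(k_1,k_2;a)$. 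I will verify each of the six rows of Table~\ref{t:dcp} by these mechanical additions, checking as a sanity test that the total $\sum_k\lambda_k$ equals $\#W=(p^2-1)p^{2n-2}$.
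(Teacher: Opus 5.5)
Your method is sound and is essentially the paper's argument, organized more cleanly. The paper splits the M\"obius sum according to which of $k_1,k_2,k_3$ divides $k/d$, and treats the configurations $k_2\mid k_1$, $k_1\mid k_2$ and neither separately. It ends with expressions such as $\lambda_{k_3p^u}=(\lt_{k_3p^u}-\lt_{k_3p^{u-1}})-(\lt_{k_1p^u}-\lt_{k_1p^{u-1}})-(\lt_{k_2p^u}-\lt_{k_2p^{u-1}})$, which is exactly your $H(u)-H(u-1)$. Because you split $\lt$ itself via $e_{k_1}e_{k_2}=e_{k_3}$ before inverting, the three configurations stay identical until a final merge. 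Your values of $H$ and its differences are correct. In particular, $p^n+p^{n-1}-2$ is the right value for $a=b$; the paper's proof writes $-2p$ there by a slip.

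The step that is wrong as stated is the merge when $k_2\mid k_1$. The combined coefficient $(p-1)(p^n+p^{n-1}-1)p^{a-1}$ at $k_1p^{n-a}$ holds only for $a=b$. When $a<b$, your own formulas give $\bigl((p-1)p^{n-1}+(p-1)(p^n-1)p^{n-1}\bigr)/(k_1p^{n-a})$, i.e.\ $\frac{(p-1)p^{n+a-1}}{k_1}\times k_1p^{n-a}$. Likewise, the unmerged $k_2$ term is $\frac{(p-1)p^{b-1}}{k_2}\times k_2p^{n-b}$, not $\frac{(p-1)p^{a-1}}{k_2}\times k_2p^{n-a}$. These are the values the paper's proof actually derives: $\lambda_{k_2p^{n-b}}=(p-1)p^{n-1}$ and $\lambda_{k_1p^{n-a}}=(p-1)p^{2n-1}$. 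They are also confirmed directly for the diagonal matrices of Section~\ref{ss:iii}. There, every vector with unit first coordinate has orbit length $k_1p^{n-a}$, and those with first coordinate $0$ and unit second coordinate have orbit length $k_2p^{n-b}$.

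The printed $k_2\mid k_1$, $a<b$ row of Table~\ref{t:dcp} has exactly these two entries wrong; they coincide with the $a=b$ entries. So your plan to verify all six rows cannot succeed there, and you should derive the corrected row rather than force agreement. Your $\#W$ sanity check would catch the last entry, since the printed row overcounts by $(p-1)(p^{2n-2}-p^{n-1})$. It would not catch the first, whose total mass is unchanged. Relatedly, the $k_1$ term is not the ``first summand in every row'': in the $k_2\mid k_1$ rows it is absorbed into the final summand.
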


\begin{proof}
We have
\begin{multline*}
\lt_k = p^{e_{k_1}(k)\min\{v(k)+a,n\} + e_{k_2}(k)\min\{v(k)+b,n\}} - \\
p^{e_{k_1}(k)\min\{v(k)+a,n-1\} + e_{k_2}(k)\min\{v(k)+b,n-1\}}.
\end{multline*}
Writing $k=k' p^u$ with $p \nmid k'$, we have
\begin{align*}
\lambda_k &= \sum_{d \mid k} \mu(d)\lt_{k/d} \\
&= \sum_{d \mid \frac{k'}{k_1}p, d \nmid \frac{k'}{k_2}p} \mu(d)\lt_{k/d} +
\sum_{d \mid \frac{k'}{k_2}p, d \nmid \frac{k'}{k_1}p} \mu(d)\lt_{k/d} +
\sum_{d \mid \frac{k'}{k_3}p} \mu(d)\lt_{k/d} \\
\intertext{breaking up the sum and using that $\lt_{k/d}=0$ unless some $k_i$ divides $\frac{k'}{d}$; we use the convention that $d \mid x$ is always false if $x \notin \Z$, so that some of these sums may be empty depending on $k_1$ and $k_2$;}
&= \sum_{d \mid \frac{k'}{k_1}, d \nmid \frac{k'}{k_2}} \mu(d)\left(\lt_{k_1p^u}-\lt_{k_1p^{u-1}}\right) +
\sum_{d \mid \frac{k'}{k_2}, d \nmid \frac{k'}{k_1}} \mu(d)\left(\lt_{k_2p^u}-\lt_{k_2p^{u-1}}\right) + \\
& \sum_{d \mid \frac{k'}{k_3}} \mu(d)\left(\lt_{k_3p^u}-\lt_{k_3p^{u-1}}\right).
\end{align*}
Note that
\begin{align*}
\sum_{d \mid \frac{k'}{k_1}, d \nmid \frac{k'}{k_2}} \mu(d) &= \sum_{d \mid \frac{k'}{k_1}} \mu(d) -
\sum_{d \mid \frac{k'}{k_1}, d \mid \frac{k'}{k_2}} \mu(d) \\
&= \sum_{d \mid \frac{k}{k_1}} \mu(d) -
\sum_{d \mid \frac{k'}{k_3}} \mu(d) \\
&= \begin{cases} 1 & k_1=k' \neq k_3; \\
-1 & k_1 \neq k' = k_3;\\
0 & \text{otherwise}. \end{cases} \\
&= \begin{cases} 1 & k'=k_1 \text{~and~} k_2 \nmid k_1; \\
-1 & k' = k_3 \text{~and~} k_2 \nmid k_1;\\
0 & \text{otherwise} \end{cases}
\end{align*}
and similarly for the second sum.

If $k_2 \mid k_1$, then $k_3 = k_1$.  We therefore conclude that
\begin{align*}
\lambda_{k_2 p^u} &= \lt_{k_2 p^u} - \lt_{k_2 p^{u-1}} \\
&= p^{\min\{u+b,n\}} -p^{\min\{u+b,n-1\}} - p^{\min\{u+b-1,n\}} + p^{\min\{u+b-1,n-1\}} \\
&= \begin{cases} (p-1)p^{n-1} & u=n-b \\ 0 & u \neq n-b \end{cases} \\
\lambda_{k_1 p^u} &= \lt_{k_1 p^u} - \lt_{k_1 p^{u-1}} - \lt_{k_2 p^{u}} + \lt_{k_2 p^{u-1}} \\
&= p^{\min\{u+a,n\} + \min\{u+b,n\}} - 
p^{\min\{u+a,n-1\} + \min\{u+b,n-1\}} - \\
& p^{\min\{u+a-1,n\} + \min\{u+b-1,n\}} +
p^{\min\{u+a-1,n-1\} + \min\{u+b-1,n-1\}} - \\
& p^{\min\{u+b,n\}} + p^{\min\{u+b,n-1\}} + p^{\min\{u+b-1,n\}} - p^{\min\{u+b-1,n-1\}} \\
&= \begin{cases} (p-1)(p^{n-b+a}-1)p^{n-1} & u=n-b, a < b \\
(p-1)^2 p^{n+u+a-2} & n-b < u < n-a \\
(p-1)p^{2n-1} & u=n-a, a < b \\
(p-1)(p^{n}+p^{n-1}-1)p^{n-1} & u=n-a, a = b  \end{cases}
\end{align*}
and all other values are zero.  This results in the asserted double coset types in these two cases.

If $k_1 \mid k_2$, then $k_3 = k_2$.  Therefore
\begin{align*}
\lambda_{k_1 p^u} &= \lt_{k_1 p^u} - \lt_{k_1 p^{u-1}} \\
&= p^{\min\{u+a,n\}}-p^{\min\{u+a,n-1\}}-p^{\min\{u+a-1,n\}}+p^{\min\{u+a-1,n-1\}} \\
&= \begin{cases} (p-1)p^{n-1} & u=n-a; \\
0 & \text{otherwise} \end{cases} \\
\lambda_{k_2 p^u} &= \lt_{k_2 p^u}-\lt_{k_2 p^{u-1}}-\lt_{k_1 p^u}+\lt_{k_1 p^{u-1}} \\
&= p^{\min\{u+a,n\}+\min\{u+b,n\}} - p^{\min\{u+a,n-1\}+\min\{u+b,n-1\}} - \\
& p^{\min\{u+a-1,n\}+\min\{u+b-1,n\}} + p^{\min\{u+a-1,n-1\}+\min\{u+b-1,n-1\}} - \\
& p^{\min\{u+a,n\}} + p^{\min\{u+a,n-1\}} + p^{\min\{u+a-1,n\}} - p^{\min\{u+a-1,n-1\}} \\
&= \begin{cases}
(p-1)p^{2n+a-b-1} & u=n-b, a < b; \\
(p-1)^2p^{u+a+n-2} & n-b < u < n-a; \\
(p-1)(p^n-1)p^{n-1} & u=n-a, a < b; \\
(p-1) (p^{n}+p^{n-1}-1   ) p^{n-1} & u=n-a, a = b; \end{cases}
\end{align*}
and all other values are zero.

Finally, if neither $k_1$ nor $k_2$ divides the other, we have
\begin{align*}
\lambda_{k_1p^u} &= \lt_{k_1 p^u}-\lt_{k_1 p^{u-1}} \\
&= p^{\min\{u+a,n\}}-p^{\min\{u+a,n-1\}}-p^{\min\{u+a-1,n\}}+p^{\min\{u+a-1,n-1\}} \\
&= \begin{cases} (p-1)p^{n-1} & u=n-a; \\
0 & \text{otherwise} \end{cases} \\
\lambda_{k_2 p^u} &= \lt_{k_2 p^u} - \lt_{k_2 p^{u-1}} \\
&= p^{\min\{u+b,n\}} -p^{\min\{u+b,n-1\}} - p^{\min\{u+b-1,n\}} + p^{\min\{u+b-1,n-1\}} \\
&= \begin{cases} (p-1)p^{n-1} & u=n-b \\ 0 & u \neq n-b \end{cases} \\
\lambda_{k_3p^u} &= \lt_{k_3p^u}-\lt_{k_3p^{u-1}}-\lt_{k_1 p^u}+\lt_{k_1p^{u-1}}-\lt_{k_2 p^u}+\lt_{k_2 p^{u-1}} \\
&= p^{\min\{u+a,n\}+\min\{u+b,n\}} - p^{\min\{u+a,n-1\}+\min\{u+b,n-1\}} - \\
& p^{\min\{u+a-1,n\}+\min\{u+b-1,n\}} + p^{\min\{u+a-1,n-1\}+\min\{u+b-1,n-1\}} - \\
& p^{\min\{u+a,n\}} + p^{\min\{u+a,n-1\}} + p^{\min\{u+a-1,n\}} - p^{\min\{u+a-1,n-1\}} -\\
& p^{\min\{u+b,n\}} + p^{\min\{u+b,n-1\}} + p^{\min\{u+b-1,n\}} - p^{\min\{u+b-1,n-1\}} \\
&= \begin{cases}
(p-1)(p^{n-b+a}-1)p^{n-1} & u=n-b, a < b; \\
(p-1)^2p^{n+u+a-2} & n-b < u < n-a; \\
(p-1)(p^n-1)p^{n-1} & u=n-a, a<b; \\
(p-1)(p^{n}+p^{n-1}-2p)p^{n-1} & u=n-a, a=b; \end{cases}
\end{align*}
and is zero otherwise.

\end{proof}

\subsection{Newton Polygons} \label{ss:np}

As is clear from (\ref{eq:smith}),
we will have repeated need to compute valuations of certain determinants.  We consider the general situation in this section.

Consider a matrix
$$g = \mat{\alpha}{x}{p^\mu}{\alpha}$$
with $\alpha \in \Zp^\times$, $\mu \geq 0$ and $x$ an indeterminate.
One checks easily that
$$g^k = \mat{\rkm(\alpha,x)}{x \skm(\alpha,x)}{p^\mu \skm(\alpha,x)}{\rkm(\alpha,x)}$$
with
\begin{align*}
\rkm(\alpha,x) &= \sum_{i=0}^{\frac{k}{2}} \binom{k}{2i}\alpha^{k-2i} p^{\mu i} x^i\\
\skm(\alpha,x) &= \sum_{i=0}^{\frac{k-1}{2}} \binom{k}{2i+1}\alpha^{k-2i-1} p^{\mu i}x^i.
\end{align*}
Define
$$\tkm(\alpha,x) = \det (g^k-1) = (\rkm(\alpha,x)-1)^2 - x p^\mu \skm(\alpha,x)^2 \in \Zp[x].$$

\begin{lemma} \label{l:super}
If $\oab$ does not divide $k$, then all roots of  $\tkm(\alpha,x)$ have valuation $-\mu$.  If $\oab$ divides $k$,
then
$\tkm(\alpha,x)$ has a unique root  $z=\zkm(\alpha)$ of maximal valuation. It lies in $\Qp$, is independent of $k$ and has $v(z) = 2\voa-\mu$.  All other roots of $\tkm(\alpha,x)$ have valuation at most $\frac{2}{p-1}-\mu$.
\end{lemma}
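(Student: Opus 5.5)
Work over a splitting field: set $y = p^\mu x$ and choose $\delta$ with $\delta^2 = y$. The matrix $g$ then has eigenvalues $\alpha \pm \delta$, so
$$\tkm(\alpha,x) = \det(g^k-1) = \bigl((\alpha+\delta)^k-1\bigr)\bigl((\alpha-\delta)^k-1\bigr),$$
since $\rkm \pm \delta\skm = (\alpha\pm\delta)^k$. As a polynomial in $x$, $\tkm$ has degree at most $k$. Its roots are the values $x = \delta^2/p^\mu$ for which $(\alpha+\delta)^k = 1$ or $(\alpha-\delta)^k = 1$; the two signs give the same $x$. Hence the roots are
$$x = \frac{(\zeta-\alpha)^2}{p^\mu}, \qquad \zeta \in \mu_k,$$
where $\mu_k$ is the group of $k$-th roots of unity, and each value is hit by the pair $\zeta$, $\zeta'$ with $\zeta + \zeta' = 2\alpha$, paired appropriately. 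I would first check this bijection and count multiplicities against the leading coefficient. This recasts everything as computing $v(\zeta - \alpha)$ for roots of unity $\zeta$, which is more natural than a Newton polygon argument (though one could cross-check with the Newton polygon of $\tkm$).

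Write $\zeta = \omega\eta$ with $\omega$ of order prime to $p$ and $\eta$ of $p$-power order, so $v(\eta - 1) \le 1/(p-1)$ or $\eta = 1$. Reduction mod $p$ then gives two cases.

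If $\bar\zeta \neq \bar\alpha$, then $v(\zeta-\alpha) = 0$ and $v(x) = -\mu$. When $\oab \nmid k$, no $\zeta \in \mu_k$ reduces to $\bar\alpha$, because $\bar\zeta^k = 1$ while $\bar\alpha^k \neq 1$. This gives the first claim.

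If $\oab \mid k$, let $\omega_\alpha$ be the Teichmüller lift of $\bar\alpha$; it lies in $\mu_k$ and in $\Zp$. For $\zeta = \omega_\alpha$ we get $v(\zeta-\alpha) = v(1 - \alpha/\omega_\alpha)$. Since $\omega_\alpha^{\oab} = 1$ and $\oab$ is prime to $p$, this equals $v(\alpha^{\oab} - 1) = \voa$, using $u^{m}-1 = (u-1)(\text{unit})$ for $u \equiv 1$ and $p \nmid m$. So $z = (\omega_\alpha - \alpha)^2/p^\mu \in \Qp$ is a root with valuation $2\voa - \mu$, and it is visibly independent of $k$.

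For every other $\zeta$ lifting $\bar\alpha$, write $\zeta = \omega_\alpha\eta$ with $\eta \neq 1$ of $p$-power order. Then
$$\zeta - \alpha = \omega_\alpha(\eta - 1) + (\omega_\alpha - \alpha).$$
The first term has valuation at most $1/(p-1) < 1 \le \voa$, so $v(\zeta-\alpha) \le 1/(p-1)$ and $v(x) \le 2/(p-1) - \mu$, strictly less than $2\voa - \mu$.

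The remaining delicate point is uniqueness, i.e. that $z$ is a root of multiplicity one and is not also produced by another $\zeta$. The value $z$ corresponds to $\delta = \pm(\omega_\alpha - \alpha)$, that is, to $\zeta = \omega_\alpha$ and $\zeta' = 2\alpha - \omega_\alpha$. The second is a root of unity only if $2\alpha - \omega_\alpha \in \mu_k$. One must then check that it contributes no additional copy of the same $x$, or else that both factors vanish only at the simple root counted once. To settle this I would compare degrees: the $k$ values of $\zeta$ give $k$ roots counted with the pairing, and I would verify via the leading coefficient of $\tkm$ that the factorization matches. The edge cases (factor $2$, small $\voa$) are where I expect the main care is needed.
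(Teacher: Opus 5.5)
Your eigenvalue and roots-of-unity route is sound and genuinely different from the paper's. As written, though, it leaves the uniqueness claim unproved, and the ``pairing'' you describe is wrong. A root $x$ is not hit by a pair $\zeta,\zeta'$ with $\zeta+\zeta'=2\alpha$; typically $2\alpha-\zeta$ is not a root of unity at all.

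The correct bookkeeping pairs the two linear factors belonging to the \emph{same} $\zeta$. Since $T^k-1=\prod_{\zeta^k=1}(T-\zeta)$,
\[
\bigl((\alpha+\delta)^k-1\bigr)\bigl((\alpha-\delta)^k-1\bigr)=\prod_{\zeta^k=1}(\alpha-\zeta+\delta)(\alpha-\zeta-\delta)=\prod_{\zeta^k=1}\bigl((\zeta-\alpha)^2-\delta^2\bigr).
\]
Both sides are polynomials in $\delta^2=p^\mu x$, so
\[
\tkm(\alpha,x)=\prod_{\zeta^k=1}\bigl((\zeta-\alpha)^2-p^\mu x\bigr).
\]
Thus $\tkm$ has degree exactly $k$, with leading coefficient $(-p^\mu)^k$. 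Its roots, with multiplicity, are the $x_\zeta=(\zeta-\alpha)^2/p^\mu$, one for each $\zeta$.

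Uniqueness now follows from estimates you already proved. The multiplicity of $z$ is the number of $\zeta$ with $(\zeta-\alpha)^2=(\omega_\alpha-\alpha)^2$. Any such $\zeta$ has $v(\zeta-\alpha)=\voa\geq 1$, whereas you showed $v(\zeta-\alpha)\leq\frac{1}{p-1}<1$ for every $\zeta\neq\omega_\alpha$. So $z$ is a simple root and all other roots have strictly smaller valuation. No degree comparison or edge-case analysis is needed, including the case $\alpha=\omega_\alpha$, where $z=0$.

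With this repair your proof is complete, and it is worth comparing with the paper's. The paper never factors $\tkm$. It bounds the valuations of the coefficients $c_m$ using valuations of binomial coefficients, obtaining the Newton polygon with vertices $(p^i,2v(k)+\mu p^i-2i)$, and reads off the root valuations. It gets $\zkm(\alpha)\in\Qp$ from the length-one first segment, and independence of $k$ from $t^\mu_{\oab}\mid\tkm$ (as $g^{\oab}-1$ divides $g^k-1$).

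Your factorization recovers that entire Newton polygon at once: the $p^j-p^{j-1}$ roots from $\zeta=\omega_\alpha\eta$, with $\eta$ of exact order $p^j$, give the segment of slope $\mu-\frac{2}{p^{j-1}(p-1)}$. It also yields the closed form $\zkm(\alpha)=(\omega_\alpha-\alpha)^2/p^\mu$, with $\omega_\alpha$ the Teichm\"uller lift of $\bar\alpha$, which is visibly in $\Qp$ and independent of $k$. This matches the paper's examples: $z^1(2)=3$ for $p=3$, and $z^1(2)=\frac{3}{5}\pm\frac{4}{5}\sqrt{-1}$ for $p=5$. It would let one compute $u_1,u_2,u_3$ directly rather than by locating a root of $t^\mu_{\oab}$. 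In exchange, the paper's argument stays entirely inside $\Zp[x]$ and needs no extension containing the roots of unity.
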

\begin{proof}
We begin by computing the Newton polygon of
$\tkm(\alpha,x)$:  if $\oab \nmid k$, then the vertices of the Newton polygon are
$$(0,0), \quad (k,\mu k);$$
while if $\oab \mid k$, then they are
\begin{multline*}
(0,2\voa+2v(k)), \quad (1,2v(k)+\mu), \quad (p, 2v(k)+\mu p-2), \ldots \\
(p^i,2v(k)+ \mu p^i-2i),  \ldots,  \quad (p^{v(k)},\mu p^{v(k)}), \quad (k,\mu k).
\end{multline*}

\begin{figure}
\includegraphics[width=8cm]{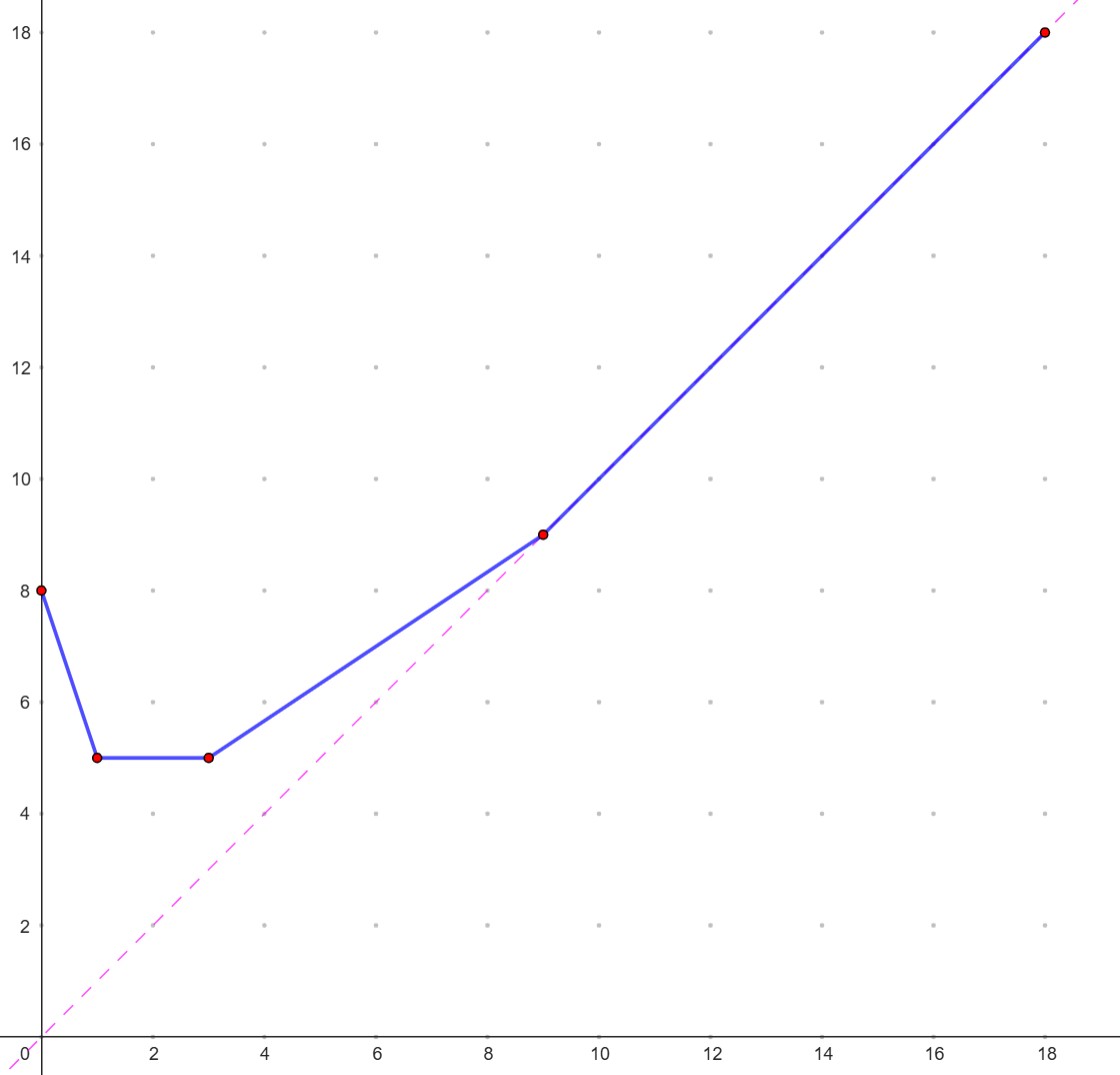}
\caption{The Newton polygon of $t_{18}^{1}(\alpha,x)$ with $\voa=2$ and $p=3$}
\end{figure}

One sees immediately that $t_k$ has degree $k$; write
$t_k = \sum_{m=0}^{k} c_m x^m$.   
Then
$$c_0 = (\alpha^k-1)^2$$
while
$$c_m = \left( 2(\alpha^k-1)\binom{k}{2m}\alpha^{k-2m} + \sum_{i = 1, i \neq 2m}^{k} (-1)^i
\binom{k}{i}\binom{k}{2m-i} \alpha^{2k-2m} \right) p^{\mu m}$$
for $m > 0$ (with the usual convention that $\binom{k}{i} = 0$ if $i < 0$ or $i > k$).

In particular, $v(c_m) \geq \mu m$ for all $m$ and
$$c_k = \pm p^{\mu k}$$
so that $v(c_k)=\mu k$.  If $\oa \nmid k$, then also $v(c_0)=0$.  Since the Newton polygon lies on or above the line
of slope $\mu$, it follows that the vertices must simply be $(0,0)$
and $(k,\mu k)$ in this case.

Assume now that $\oab \mid k$.  We have
$$c_1 = \left(2(\alpha^k-1)\binom{k}{2}\alpha^{k-2m} - k^2\alpha^{2k-2m} \right) p^\mu.$$
Since $v(\alpha^k-1)=\voa + v(k)$, we see that 
\begin{align*}
v(c_0) &= 2\voa+2v(k) \\
v(c_1) &= \mu+\min\{\voa+v(k)+v(k)+v(k-1),2v(k)\} = \mu+2v(k).
\end{align*}
If $p \nmid k$, then $v(c_1)=\mu$ lies on the line of slope $\mu$, so that as above we see that the Newton polygon
has vertices $(0,2\voa+2v(k))$, $(1,\mu)$, and $(k,\mu k)$.

Assume finally that $\oab p \mid k$.  
Since $p \mid k$, we have
$v\left(\binom{k}{i}\right) = v(k)-v(i)$ for $0 \leq i \leq k$.  Thus
\begin{align*}
v(c_m) &\geq \mu m+\min\Bigl\{ 2v(k)-v(1)-v(2m-1), 2v(k)-v(2)-v(2m-2), \ldots,\\
 & \qquad 2v(k)-2v(m) \Bigr\} \\
&= \mu m+2v(k) - \max \{ v(1)+v(2m-1), \ldots, 2v(m) \}.
\end{align*}
(The term $2(\alpha^k-1)\binom{k}{2m}\alpha^{k-2m}$ has valuation $\voa+2v(k)-v(2m)$ and can never be the term of least valuation since $\voa > 0$.)
If $m=p^i$ with $1 \leq i \leq v(k)$, then $2v(p^i)$ is larger than any other term above, so that we get an equality
$$v(c_{p^i}) = \mu p^i + 2v(k)-2i.$$
If $m$ satisfies $p^i < m < p^{i+1}$ with $i+1 \leq v(k)$, then the (not necessarily unique) largest value of
$v(j)+v(2m-j)$ occurs when $j=p^i$, where
$$v(p^i)+v(2m-p^i) = i + \min \{v(m),i\} \leq 2i.$$
Thus
$$v(c_m) \geq \mu m+2v(k)-2i.$$
One checks easily that this lies above the line segment connecting $(p^i,\mu p^i+2v(k)-2i)$ and $(p^{i+1},\mu p^{i+1}+2v(k)-2i-2)$, which confirms the vertices of the Newton polygon up to $x=p^{v(k)}$.  The final segment from $(p^{v(k)},\mu p^{v(k)})$ to
$(k,\mu k)$ is clear since the Newton polygon lies on or above the line of slope $\mu$.

Given the Newton polygons above, every statement in the lemma is clear except for the fact that the maximal valuation
roots of 
$\tkm(\alpha,x)$ for $\oab \mid k$ are all the same.  Let $\zkm(\alpha)$ denote the maximal valuation root of 
$\tom$.  Fix $k$ with $\oab \mid k$.  Since $\tkm$ is the determinant of $g^k-1$, which is divisible by $g^{\oab}-1$ in the
matrix algebra,
 the multiplicativity of the determinant shows that $\tom \mid \tkm$ in $\Zp[x]$.
Thus $\zkm(\alpha)$ is also a root of $\tkm$, as claimed.
\end{proof}

\begin{cor} \label{cor:val}
Fix $\alpha \in \Zp^\times$, $\mu \geq 0$ and $k \geq 1$. 
For any $\beta \in \Qp$ with $v(\beta) > \frac{2}{p-1}-\mu$ one has
$$v\bigl(\tkm(\alpha,\beta)\bigr) = \begin{cases} 0 & \oab \nmid k \\ v\bigl(\beta-\zkm(\alpha)\bigr)+2v(k)+\mu &
\oab \mid k. \end{cases}$$
\end{cor}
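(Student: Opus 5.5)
Write $\tkm(\alpha,x) = c_k\prod_{j=1}^{k}(x-z_j)$ with $c_k=\pm p^{\mu k}$ the leading coefficient found in the proof of Lemma~\ref{l:super}, and $z_j\in\Qpbar$ its roots. Then
$$v\bigl(\tkm(\alpha,\beta)\bigr) = \mu k + \sum_{j=1}^k v(\beta - z_j).$$
The whole argument comes down to evaluating each term $v(\beta-z_j)$ using the root valuations from Lemma~\ref{l:super}, via the ultrametric inequality.

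\emph{Case $\oab \nmid k$.} Every root has $v(z_j)=-\mu$. Since $v(\beta) > \frac{2}{p-1}-\mu > -\mu$, we get $v(\beta-z_j) = -\mu$ for each $j$. Summing gives $\mu k - \mu k = 0$.

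\emph{Case $\oab\mid k$.} Take $z_1=\zkm(\alpha)$. All other roots satisfy $v(z_j)\le \frac{2}{p-1}-\mu < v(\beta)$. If $v(z_j)<v(\beta)$ strictly, then $v(\beta-z_j)=v(z_j)$. Hence
$$v\bigl(\tkm(\alpha,\beta)\bigr) = v(\beta-\zkm(\alpha)) + \mu k + \sum_{j\ge 2} v(z_j).$$
It remains to identify $\mu k + \sum_{j\geq 2}v(z_j)$ with $2v(k)+\mu$. By the Newton polygon computed in Lemma~\ref{l:super}, the roots other than $z_1$ correspond to all segments of the polygon except the first, which runs from $(0,2\voa+2v(k))$ to $(1,2v(k)+\mu)$. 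The sum of their valuations equals the drop in height from $(1,2v(k)+\mu)$ to $(k,\mu k)$, namely $2v(k)+\mu-\mu k$. Adding $\mu k$ gives $2v(k)+\mu$, as claimed. Equivalently, $v(c_k)+\sum_{j\ge2}v(z_j)=v(c_1)$.

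The main subtlety is the strictness of the inequality $v(z_j) < v(\beta)$ for $j\ge2$. The lemma only gives the bound $v(z_j)\le \frac{2}{p-1}-\mu$, and the hypothesis $v(\beta)>\frac{2}{p-1}-\mu$ is exactly what makes the comparison strict. One must also check that the Newton polygon vertex list is valid in the subcases $p\nmid k$ and $p\mid k$. When $p\nmid k$, the polygon has vertices $(0,2\voa)$, $(1,\mu)$ and $(k,\mu k)$, and the same computation gives $\mu$, consistent with $v(k)=0$. Finally, $z_1$ itself is simple, since it is the unique root of maximal valuation; so no multiplicity issue arises, and the formula holds even when $\beta=\zkm(\alpha)$, both sides being infinite.
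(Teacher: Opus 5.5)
Your proof is correct and follows the paper's argument. In both, you factor $\tkm(\alpha,x)$ over its roots, use the hypothesis on $v(\beta)$ to get $v(\beta-\gamma)=v(\gamma)$ for every root $\gamma\neq\zkm(\alpha)$, and then extract the sum of those valuations from the Newton polygon of Lemma~\ref{l:super}. The only cosmetic difference is how that sum is computed: the paper gets $v(c_0)-v(c_k)-v\bigl(\zkm(\alpha)\bigr)=2v(k)+\mu-\mu k$ from the product of all roots, while you read it off as $v(c_1)-v(c_k)$; these agree because $v(c_0)-v(c_1)=2\voa-\mu=v\bigl(\zkm(\alpha)\bigr)$.
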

\begin{proof}
If $\oab \nmid k$, then this is immediate from the Newton polygon.
Assume therefore that $\oab \mid k$.  By Lemma~\ref{l:super} we may write
$$\tkm(\alpha,x) = p^{\mu k}\bigl(x-\zkm(\alpha)\bigr)(x-\gamma_1)\cdots(x-\gamma_{k-1})$$ with
$\zkm(\alpha) \in \Qp$ of valuation $2\voa-\mu$ and $\gamma_i \in \bar{\Q}_p$
of valuation at most $\frac{2}{p-1}-\mu$.
We compute
\begin{align*}
v\bigl(\tkm(\alpha,\beta)\bigr) &= \mu k + v\bigl(\beta-\zkm(\alpha)\bigr) + v(\beta - \gamma_1) + \cdots +
v(\beta - \gamma_{k-1}) \\
&=\mu k + v\bigl(\beta-\zkm(\alpha) \bigr) + v(\gamma_1) + \cdots +
v(\gamma_{k-1}) \\ \intertext{since $v(\beta) > v(\gamma_i)$ and thus $v(\beta-\gamma_i)=v(\gamma_i)$}
&= \mu k+ v\bigl(\beta-\zkm(\alpha)\bigr) + v(\gamma_1 \cdots \gamma_{k-1}) \\
&= \mu k + v\bigl(\beta-\zkm(\alpha)\bigr) + v\left( \frac{c_0}{p^{\mu k} \zkm(\alpha)} \right) \\
&= v\bigl(\beta-\zkm(\alpha)\bigr) + v(c_0) - v\bigl(\zkm(\alpha)\bigr) \\
&= v\bigl(\beta-\zkm(\alpha)\bigr) + 2\voa+2v(k)-2\voa+\mu \\
&= v\bigl(\beta-\zkm(\alpha)\bigr)+2v(k)+\mu.
\end{align*}

\end{proof}

Corollary~\ref{cor:val} will suffice in every case we consider except for $p=3$, $\mu=0$ and
$v(\beta)=1$.  For this we need the following refinement.

\begin{lemma} \label{l:val3}
Fix $\alpha \in \Z_3^\times$ and $\beta \in \Z_3$ with $v(\beta)=1$.
If $\alpha \equiv 1 \pmod{3}$, then
$$v\bigl(t_k^0(\alpha,\beta)\bigr) = \begin{cases} 1 & 3 \nmid k; \\
\min\{2v(2\beta-2\alpha^2-2\alpha+1)-1,2v(2\alpha+1)\} +2v(k) & 3 \mid k.\end{cases}$$
If $\alpha \equiv 2 \pmod{3}$, then
$$v\bigl(t_k^0(\alpha,\beta)\bigr) = \begin{cases} 0 & 2 \nmid k; \\
1 & (k,6)=2 \\
\min\{2v(2\beta-2\alpha^2+2\alpha+1)-1,2v(2\alpha-1)\} +2v(k) & 6 \mid k.\end{cases}$$
\end{lemma}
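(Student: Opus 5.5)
The plan is to diagonalize $g=\mat{\alpha}{\beta}{1}{\alpha}$ over a ramified quadratic extension of $\Q_3$ and compute $t_k^0(\alpha,\beta)$ as a norm. Since $v(\beta)=1$, the polynomial $x^2-\beta$ is Eisenstein, so $F=\Q_3(\sqrt\beta)$ is totally ramified of degree $2$ with uniformizer $\pi=\sqrt\beta$. Let $w$ be the normalized valuation on $F$, so $w(\pi)=1$ and $w|_{\Q_3}=2v$. The eigenvalues of $g$ are $\lambda=\alpha+\pi$ and its conjugate $\lambda'=\alpha-\pi$. Hence
$$t_k^0(\alpha,\beta)=\det(g^k-1)=(\lambda^k-1)(\lambda'^k-1)=N_{F/\Q_3}(\lambda^k-1),$$
and because $F/\Q_3$ is totally ramified, $v\bigl(t_k^0(\alpha,\beta)\bigr)=w(\lambda^k-1)$. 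So the whole lemma reduces to computing $w(\lambda^k-1)$ for the unit $\lambda\in\O_F^\times$.

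\emph{Case $\alpha\equiv 1\pmod 3$.} Here $\lambda-1=(\alpha-1)+\pi$ and $w(\alpha-1)\ge 2$, so $w(\lambda-1)=1$.
\begin{enumerate}
\item For $3\nmid k$, write $\lambda^k-1=(\lambda-1)(1+\lambda+\cdots+\lambda^{k-1})$. The second factor is $\equiv k \not\equiv 0$ modulo $\pi$, so $w(\lambda^k-1)=1$.
\item For $3\mid k$, write $k=3^s m$ with $3\nmid m$. The prime-to-$3$ part $m$ contributes only a unit factor, by the same argument. For the $3$-part, the critical level is $e/(p-1)=1$. Cubing an element $1+x$ with $w(x)=1$ can produce cancellation between $3x$ and $x^3$, which both have valuation $3$. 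So $w(\lambda^3-1)$ must be computed directly.
\item Once $y=\lambda^{3}-1$ satisfies $w(y)>1$, the identity $(1+y)^3-1=3y+3y^2+y^3$ has $w(3y)=w(y)+2<\min\{w(3y^2),w(y^3)\}$. Each further cubing therefore adds exactly $2$, and we get $w(\lambda^k-1)=w(\lambda^3-1)+2(s-1)$.
\item For the first cubing, expand $(\alpha+\pi)^3-1=(\alpha^3+3\alpha\beta-1)+\pi(3\alpha^2+\beta)$. This gives
$$w(\lambda^3-1)=\min\{2v(\alpha^3+3\alpha\beta-1),\,2v(3\alpha^2+\beta)+1\},$$
since the two terms have valuations of different parity and so cannot cancel.
\item Finally I will show that this minimum equals $\min\{2v(2\beta-2\alpha^2-2\alpha+1)-1,\,2v(2\alpha+1)\}+2$ whenever $w(\lambda^3-1)>1$. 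The tools are the factorization $\alpha^3-1=(\alpha-1)(\alpha^2+\alpha+1)$ and the relation $(2\alpha+1)^2=4(\alpha^2+\alpha+1)-3$. To handle the case where the naive minimum is attained at the "wrong" term, I would replace $\lambda$ by $\lambda\cdot u$ for a suitable $3$-adic unit $u$ with $u^3\equiv 1$ to the needed precision. Alternatively, I would rewrite the coordinates of $\lambda^3-1$ in the basis $1,\pi$ modulo the appropriate power of $3$. I would also check that the case $w(\lambda^3-1)=1$ does not arise for $3\mid k$.
\end{enumerate}

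\emph{Case $\alpha\equiv 2\pmod 3$.} Here $\lambda\equiv -1 \pmod \pi$.
\begin{enumerate}
\item For odd $k$, $\lambda^k-1\equiv -2$ modulo $\pi$ is a unit, so the valuation is $0$.
\item For even $k$, apply the first case to $\lambda^2=(\alpha^2+\beta)+2\alpha\pi$. Equivalently, $g^2=\mat{\alpha^2+\beta}{2\alpha\beta}{2\alpha}{\alpha^2+\beta}$, which is conjugate (scale the basis by the unit $2\alpha$) to $\mat{\alpha'}{\beta'}{1}{\alpha'}$ with $\alpha'=\alpha^2+\beta\equiv 1\pmod 3$ and $\beta'=4\alpha^2\beta$, so $v(\beta')=1$.
\item This yields valuation $1$ when $(k,6)=2$. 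When $6\mid k$, it yields the min-formula in $\alpha',\beta'$ plus $2v(k/2)=2v(k)$. Substituting back, $2\alpha'+1=2\alpha^2+2\beta+1$ and $2\beta'-2\alpha'^2-2\alpha'+1$ should reduce, to the relevant precision, to the stated expressions $2\alpha-1$ and $2\beta-2\alpha^2+2\alpha+1$ up to units. The factor $\alpha+1$, which is divisible by $3$, will need care in this step.
\end{enumerate}

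The main obstacle is the explicit reconciliation of $w(\lambda^3-1)$ with the stated closed form in the first case; the rest is the standard ramified higher-unit-group argument.
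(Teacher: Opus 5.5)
\textbf{There is a genuine gap in step 5.} The rest of your reduction is sound. The identity $v\bigl(t_k^0(\alpha,\beta)\bigr)=w(\lambda^k-1)$, the cases $3\nmid k$ and $k$ odd, and the cubing step are all correct. Also, $w(\lambda^3-1)>1$ is automatic, since $(1+x)^3-1=3x+3x^2+x^3$ with $w(x)=1$ has valuation at least $3$. This gets the $+2v(k)-2$ more cleanly than the paper's Newton polygon argument.

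Step 5, however, is the actual content of the lemma, and you only announce it. The route you set up does not lead there directly. The valuations of $\alpha^3+3\alpha\beta-1$ and $3\alpha^2+\beta$ do not track those of $2\beta-2\alpha^2-2\alpha+1$ and $2\alpha+1$: for $\alpha=1$, $\beta=-3$ one has $3\alpha^2+\beta=0$ but $2\alpha+1=3$. Twisting $\lambda$ by an approximate cube root of unity is a gesture, not an argument.

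The missing idea is to factor before expanding. Write $\lambda^3-1=(\lambda-1)(\lambda^2+\lambda+1)$, with $w(\lambda-1)=1$ and $\lambda^2+\lambda+1=X+(2\alpha+1)\pi$ where $X=\alpha^2+\alpha+1+\beta$. Then $w(\lambda^3-1)=1+\min\{2v(X),\,2v(2\alpha+1)+1\}$. Now $2\beta-2\alpha^2-2\alpha+1=2X-(2\alpha+1)^2$ and $v(2\alpha+1)\geq 1$. Separating the cases $v(X)\leq v(2\alpha+1)$ and $v(X)>v(2\alpha+1)$ shows that $\min\{2v(X),2v(2\alpha+1)+1\}=\min\{2v(2\beta-2\alpha^2-2\alpha+1),2v(2\alpha+1)+1\}$. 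This is exactly the claimed value at $k=3$.

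This is the $F$-side counterpart of the paper's key step. The norm $N_{F/\Q_3}(\lambda^2+\lambda+1)=X^2-\beta(2\alpha+1)^2$ is the quadratic factor $x^2+(1-2\alpha-2\alpha^2)x+(\alpha^2+\alpha+1)^2$ of $t_3^0(\alpha,x)$ evaluated at $x=\beta$. The paper treats the roots of that factor in $\Q_3(\sqrt{-3})$.

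\textbf{The same gap recurs for $\alpha\equiv 2\pmod 3$, and there your stated expectation is false.} The expressions $2\alpha'+1$ and $2\beta'-2\alpha'^2-2\alpha'+1$ do not agree with $2\alpha-1$ and $2\beta-2\alpha^2+2\alpha+1$ up to units. For $\alpha=-1$, $\beta=3$ you get $\alpha'=4$ and $\beta'=12$, so $v(2\alpha'+1)=v(9)=2$ while $v(2\alpha-1)=v(-3)=1$. Only the two minima agree (both equal $1$), so a termwise substitution cannot work.

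It is simpler to negate than to square. For even $k$ we have $g^k=(-g)^k$, and $-g$ is conjugate by $\mathrm{diag}(1,-1)$ to $\mat{-\alpha}{\beta}{1}{-\alpha}$ with $-\alpha\equiv 1\pmod 3$. Hence $t_k^0(\alpha,\beta)=t_k^0(-\alpha,\beta)$. The first case with $\alpha$ replaced by $-\alpha$ then yields precisely the stated values for $(k,6)=2$ and for $6\mid k$.
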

\begin{proof}
Assume first that $\alpha \equiv 1 \pmod{3}$.  If $3 \nmid k$, then the Newton polygon of the monic polynomial $t_k^0(\alpha,x)$ (as computed in the proof of Lemma~\ref{l:super}) shows that it has one root of valuation at least two and $k-1$ unit roots.  It follows immediately that
$v(t_k^0(\alpha,\beta))=1$.

Turning to the case that $3 \mid k$, one computes that
$$t_3^0(\alpha,x) = \bigl(x-(\alpha-1)^2\bigr)
\bigl(x^2+(1-2\alpha-2\alpha^2)x+(\alpha^2+\alpha+1)^2\bigr).$$
Since $\alpha \equiv 1 \pmod{3}$, we have
$$v\bigl(\beta-(\alpha-1)^2\bigr)=1.$$
The quadratic factor has roots
$$\gamma = \frac{2\alpha^2+2\alpha-1+(2\alpha+1)\sqrt{-3}}{2}$$
and its conjugate $\bar{\gamma}$.  Therefore
$$v(\beta-\gamma)=v(\beta-\bar{\gamma}) = \min\left\{ v\left(\beta-\frac{2\alpha^2+2\alpha-1}{2}\right),v(2\alpha+1)+\frac{1}{2} \right\}.$$
It follows that
$$v\bigl(t_3^0(\alpha,\beta)\bigr) = 1+\min\{2v(2\beta-2\alpha^2-2\alpha+1),2v(2\alpha+1)+1\}.$$

For general $k$ with $3 \mid k$, the Newton polygon shows that
$$t_k^0(\alpha,x) = t_3^0(\alpha,x)(x-\gamma_1)\cdots(x-\gamma_\ell)$$
where $v(\gamma_i)<1$.  Therefore
\begin{align*}
v\bigl(t_k^0(\alpha,\beta)\bigr) &= v\bigl(t_3^0(\alpha,\beta)\bigr)+v(\beta-\gamma_1)+\cdots
+v(\beta - \gamma_\ell) \\
&= v\bigl(t_3^0(\alpha,\beta)\bigr)+ v(\gamma_1) + \cdots + v(\gamma_\ell) \\
&= v\bigl(t_3^0(\alpha,\beta)\bigr)+v(\gamma_1 \cdots \gamma_\ell) \\
&= v\bigl(t_3^0(\alpha,\beta)\bigr)+v\left( \frac{(\alpha^k-1)^2}{(\alpha^3-1)^2}\right) \\
&= v\bigl(t_3^0(\alpha,\beta)\bigr) +2v(k)-2.
\end{align*}

The case of $\alpha \equiv 2 \pmod{3}$ is similar, with the relevant polynomials being
the factors of
$$\frac{t_6^0(\alpha,x)}{t_3^0(\alpha,x)} = 
\bigl(x-(\alpha+1)^2\bigr) 
\bigl(x^2+(1+2\alpha-2\alpha^2)x+(\alpha^2-\alpha+1)^2\bigr).$$
\end{proof}

We now have all of the tools required to prove Proposition~\ref{prop:main}.  We proceed using the conjugacy class representatives given in
Proposition~\ref{p:ccpn}.

\subsection{$\text{I}(\alpha)$}

This case is predictably straightforward:
$$g^k-1 = \mat{\alpha^k-1}{0}{0}{\alpha^k-1}.$$
By Lemma~\ref{lemma:val} we are precisely in the situation of  Corollary~\ref{cor:dcp} with $k_0 = \oab$ and
$$a=b=\voa.$$
Thus the double coset type in this case is $\dcp(\oab;\voa)$.

\subsection{$\text{I}'_{\mu,\nu}(\alpha,\beta)$}

Consider
$$g=\mat{\alpha}{\beta p^\nu}{p^\mu}{\alpha}$$
with $\alpha,\beta$ units and $1 \leq \mu < \nu < n$.  We apply the framework of Section~\ref{ss:np}:
$$g^k-1 = \mat{\rkm(\alpha,\beta p^\nu)-1}{\beta p^\nu\skm(\alpha,\beta p^\nu)}{p^\mu \skm(\alpha,\beta p^\nu)}{\rkm(\alpha,\beta p^\nu)-1}.$$
To compute the Smith normal form, we first must estimate the valuations of the entries above.  If $\oab \nmid k$, then $r-1$ is a unit and the Smith normal form is immediately seen to be the identity.
Assume therefore that $\oab \mid k$.
The smallest valuation terms in $r-1$ are the constant and linear terms:
$$\rkm(\alpha,p^m\beta)-1 = (\alpha^k-1) + \binom{k}{2}\alpha^{k-2}\beta p^{\mu+\nu}+\cdots$$
of valuation $\voa+v(k)$ and $v(k)+v(k-1)+\mu+\nu$ respectively.  Clearly $p^\mu s$ has smaller valuation than $\beta p^\nu s$; the smallest valuation term in $p^\mu s$ is always the constant term:
$$p^\mu \skm(\alpha,\beta p^\nu) = k\alpha^{k-1}p^\mu + \cdots$$
of valuation $v(k)+\mu$.

Assume first that $\voa \leq \mu$, so that $r-1$ is the entry of least valuation $\voa + v(k)$ (which is strictly less than the valuation of the linear term under the assumption).  
By (\ref{eq:smith}) the Smith normal form is therefore
$$\mat{p^{\voa+v(k)}}{0}{0}{p^{u_1+v(k)+\mu-\voa}}$$
using Corollary~\ref{cor:val} to evaluate the valuation of the determinant $t_k^\mu(\alpha,\beta p^\nu)$; here $u_1=v(\beta p^\nu -\zkm(\alpha))$.  
We apply Corollary~\ref{cor:dcp} to conclude that the double coset type is $\dcp(\oab;\voa,u_1+\mu-\voa)$ in this case.

If instead $\voa > \mu$, then $p^\mu s$ is the term of least valuation and instead we get:
$$\mat{p^{\mu+v(k)}}{0}{0}{p^{u_1+v(k)}}.$$
Thus the double coset type is $\dcp(\oab;\mu,u_1)$.

\subsection{$\text{I}'_\mu(\alpha)$}

Consider 
$$g = \mat{\alpha}{0}{p^\mu}{\alpha}.$$
Although it is overkill in this case, we might as well apply the approach of Section~\ref{ss:np}.
We obtain
$$g^k-1 = \mat{\rkm(\alpha,0)}{0 \skm(\alpha,0)}{p^\mu \skm(\alpha,0)}{\rkm(\alpha,0)} =
 \mat{\alpha^k-1}{0}{k\alpha^{k-1}p^\mu}{\alpha^k-1}.$$
If $\oab \nmid k$, then $\alpha^k-1$ is a unit and one immediately sees that the Smith normal form is the identity.  Assume therefore that $\oab \mid k$.
If $\voa \leq \mu$, then $\alpha^k-1$ has minimal valuation, so the Smith normal form is
$$\mat{p^{\voa+v(k)}}{0}{0}{p^{\voa+v(k)}}$$
and the double coset type is $\dcp(\oab;\voa)$.
If instead $\voa > \mu$, then $k \alpha^{k-1} p^\mu$ has minimal valuation, so that the Smith normal form is instead
$$\mat{p^{\mu+v(k)}}{0}{0}{p^{2\voa-\mu+v(k)}}$$
and the double coset type is $\dcp(\oab;\mu,2\voa-\mu)$.

\subsection{ $\text{I}^\pm_\mu(\alpha)$}

Consider
$$g=\mat{\alpha}{\beta p^\mu}{p^\mu}{\alpha}$$
so that
$$g^k-1 = \mat{\rkm(\alpha,\beta p^\mu )-1} {\beta p^\mu \skm(\alpha,p^\mu \beta)} {p^\mu \skm(\alpha,\beta p^\mu)}{\rkm(\alpha,\beta p^\mu )-1}.$$
As usual, the Smith normal form is the identity if $\oab \nmid k$, so we assume $\oab \mid k$.
The smallest valuation term for $r-1$ can be either $i=0$ or $i=1$, while for $s$ it is always $i=0$:
\begin{align*}
v\bigl(r_{k,\mu}(\alpha,\beta p^\mu)-1\bigr) &= \begin{cases} \voa+v(k) & \voa < v(k-1)+2\mu \\ v(k)+v(k-1)+2\mu &  \voa > v(k-1)+2\mu \end{cases} \\
v\bigl(p^\mu s_{k,\mu}(\alpha,\beta p^\mu)\bigr) &= v(k) + \mu.
\end{align*}

If $\voa < \mu$, then $r-1$ is the term of minimal valuation $\voa+v(k)$, so that the Smith normal form is
$$\mat{p^{\voa+v(k)}}{0}{0}{p^{u_2+\mu-\voa+v(k)}}$$
with $u_2 = v(\beta p^\mu - \zkm(\alpha))$.  Note that $v(\zkm(\alpha))=2\voa-\mu < \mu = v(\beta p^\mu)$, so that
$u_2=2\voa-\mu$.  Thus in this case the double coset type is $\dcp(\oab;\voa)$.

If $\voa > \mu$, then $p^\mu s$ has minimal valuation and the Smith normal form is
$$\mat{p^{\mu+v(k)}}{}{}{p^{u_2+v(k)}}.$$
This time we have $u_2 = \mu$, so that the double coset type is $\dcp(\oab;\mu)$.

If $\voa = \mu$, then either approach works but $u_2$ can no longer be determined purely from valuations, so that the double coset type is $\dcp(\oab;\mu,u_2)$.

\subsection{$\text{II}(\alpha,\beta)$} \label{ss:iiab}

We have 
$$g = \mat{\alpha}{\beta p}{1}{\alpha}$$
so that
$$g^k-1 = \mat{r_k^0(\alpha,\beta p)-1}{\beta p s_k^0(\alpha,\beta p)}{s_k^0(\alpha,\beta p)}{r_k^0(\alpha,\beta p)-1}.$$
As always, it suffices to consider the case $\oab \mid k$.

Except in the exceptional case $p=3$ and $v(\beta) = 0$,
the  term of least valuation appearing in either of $r-1$, $s$ is the $i=0$ term in $s$ of valuation $v(k)$.
Thus the Smith normal form is simply
$$\mat{p^{v(k)}}{}{}{p^{u_3+v(k)}}$$
where $u_3 = v(\beta p-z^0(\alpha))$ and $v(z^0(\alpha))=2\voa$.  The double coset type is therefore
$\dcp(\oab; 0, u_3)$.

For $p=3$ and $\beta \in \Z_3^\times$ we instead apply Lemma~\ref{l:val3}, which tells us that
for $3\oab \mid k$ we have
$$v\bigl(t_k^0(\alpha,3\beta)\bigr) = u_4 + 2v(k);$$ here
$$u_4 = \min\{2v(6\beta-2\alpha^2-2\varepsilon \alpha+1)-1,
2v(2\alpha+\varepsilon)\}$$
with $\varepsilon = 1$ (resp.\ $-1$) if $\alpha \equiv 1 \pmod{3}$ (resp.\ $\alpha \equiv 2 \pmod{3}$)

Note that the definition
$$t_k^0(\alpha,3\beta) = \bigl(r_k^0(\alpha,3\beta)-1\bigr)^2 - 3\beta s_k^0(\alpha,3\beta)^2$$
shows that if $u_4$ is even, then we must have
$$\frac{u_4}{2} = v\bigl(r_k^0(\alpha,3\beta)-1\bigr) < v\bigl(s_k^0(\alpha,3\beta)\bigr)$$
while if $u_4$ is odd, then we must have
$$\frac{u_4-1}{2} = v\bigl(s_k^0(\alpha,3\beta)\bigr) < v\bigl(r_k^0(\alpha,3\beta)-1\bigr).$$
In the former case, it follows that the Smith normal form is
$$\mat{p^{\frac{u_4}{2}+v(k)}}{0}{0}{p^{\frac{u_4}{2}+v(k)}},$$
resulting in a double coset type of $\dcp(\oab;\frac{u_4}{2})$
while in the latter case it is
$$\mat{p^{\frac{u_4-1}{2}+v(k)}}{0}{0}{p^{\frac{u_4+1}{2}+v(k)}}$$
with double coset type $\dcp(\oab;\frac{u_4-1}{2},\frac{u_4+1}{2})$.


\subsection{$\text{III}(\alpha,\beta)$} \label{ss:iii}

We have
$$g = \mat{\alpha}{0}{0}{\beta}$$
so that
$$g^k-1 = \mat{\alpha^k-1}{0}{0}{\beta^k-1}.$$
Set
\begin{align*}
\voa &= v(\alpha^{\oab}-1) \\
\vob &= v(\beta^{\obb}-1)
\end{align*}
and assume without loss of generality that $\voa \leq \vob$.
If $\oab=\obb$, then we are still in the situation of Corollary~\ref{cor:dcp}.  Otherwise we are in precisely the situation of Lemma~\ref{lemma:diag} with $k_1=\oab$, $k_2 = \obb$, $a = \voa$, $b= \vob$.

\subsection{$\text{IV}(\alpha,\beta)$}

Let $g \in G$ be a matrix with irreducible characteristic polynomial $f$ of unit discriminant.  Let $o_{\bar{g}}$ denote the order of the image of $g$ in $\GL_2(\Fp)$; we can write the
order $o_g$ of $g$ as $o_g = o_{\bar{g}}p^{v_g}$ for some $v_g \geq 0$.
Let $\O$ denote the ring of integers of the unramified quadratic extension of $\Qp$.  Then by Hensel's lemma $f$ has conjugate roots
$\gamma,\bar{\gamma}$ in ${\O}/p^n$ and $g$ is conjugate to
$$\mat{\gamma}{0}{0}{\bar{\gamma}}$$
in $\GL_2({\O}/p^n)$.  As $\gamma$ and $\bar{\gamma}$ both have order $o_g$ we may apply the simplest case of
$\text{III}$ to conclude that all orbits of $\left< g \right>$ on $W \otimes {\O}$ have order $o_g$.  As orbits of elements in $W \subseteq W \otimes {\O}$ will lie in $W$, it follows that all orbits in $W$ also have order $o_g$.  Thus the double coset type is simply $\dcp(o_{\bar{g}};v_g)$.\

\section{Double coset types: ramified cases} \label{s:ram}

\subsection{Multiplicative reduction}

We consider in this section double coset types  for the decomposition and inertia groups which occur for elliptic curves in certain ramified cases.
We begin with the case of multiplicative reduction.  Fix an odd prime power $p^n$,
$\alpha \in \Zpns$, $0 \leq b_{1} \leq b_{2} \leq n$ and $\vep \in \{ \pm 1\}$.  Consider subgroups
$$ I \subseteq D \subseteq \GL_2(\Zpn)$$
given by
\begin{align*}
D &= \left\{ \mat{\alpha^i}{p^{b_1} j}{0}{\vep^i} ; 0 \leq i < \oap, 0 \leq j < p^{n-b_1} \right\} \\
I &= \left\{ \mat{1}{p^{b_2} j}{0}{1} ; 0 \leq j < p^{n-b_2} \right\}.
\end{align*}
Here $\oap$ is the least common multiple of $\oa$ and the order $\oe$ of $\vep$; that is,
$\oap = 2\oa$ if $\vep = -1$ and $\oa$ is odd, and $\oap = \oa$ otherwise.  
We proceed to compute the double coset type of the pair $(D,I)$ as in Section~\ref{s:dcp}.

Although it is in principle appealing to try to develop a version of the arithmetic function theory we used for cyclic subgroups in this setting, the subgroup lattice of $D$ is far too complicated to make that practical.  We instead resort to a lengthy series of calculations for the actions of $D$ and $I$ on 
$$\GL_2(\Zpn)/\Gamma \cong W = \left\{ \vec{x}{y} ; x,y \in \Zpn, v(x)=0 \text{~or~} v(y)=0 \right\}.$$

As usual, let $\oab$ denote the order of the reduction $\bar{\alpha}$ and set 
$\voa = v(\alpha^{\oab}-1)$.  We also write $\oabp$ for the least common multiple of $\oab$
and $\oe$; note that $\oap = \oabp p^{n-\voa}$.

\begin{lemma} \label{l:stab}
Fix $w=\vec{x}{y} \in W$.  The stabilizer of $D$ acting on $w$ has order given by:
$$\St_{D,w} = \begin{cases} \frac{\oap}{\oa} p^{n-b_1} & y=0; \\
\frac{\oap}{\oe} p^{v(y)} & v(x) \geq b_1+v(y); \\
p^{\min\{n-b_1+v(x),n-\voa+v(y)\}} & v(x) < b_1+v(y), y \neq 0. \end{cases}$$
The stabilizer $\St_{I,w}$ of $I$ acting on $w$ has order $p^{\min\{v(y),n-b_2\}}$.
\end{lemma}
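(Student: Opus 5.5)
The plan is to compute stabilizers directly from the action of a general element of $D$ on $W$. We have
$$\mat{\alpha^i}{p^{b_1}j}{0}{\vep^i}\vec{x}{y} = \vec{\alpha^i x + p^{b_1} j y}{\vep^i y},$$
so the element indexed by $(i,j)$, with $i$ taken modulo $\oap$ and $j$ modulo $p^{n-b_1}$, fixes $w$ if and only if
$$(\vep^i-1)y = 0 \quad\text{and}\quad (\alpha^i-1)x \equiv -p^{b_1} j y \pmod{p^n}.$$
The proof then reduces to counting the pairs $(i,j)$ satisfying these two congruences. The inertia statement is the special case $\alpha=\vep=1$ with $b_2$ in place of $b_1$. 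There the condition is $p^{b_2} j y \equiv 0$, and the number of $j$ modulo $p^{n-b_2}$ satisfying it is $p^{\min\{v(y),n-b_2\}}$.

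\textbf{Case $y=0$.} The first condition is automatic. Since $w\in W$, $x$ is a unit, so the second condition is $\alpha^i=1$ in $\Zpns$. Hence $\oa \mid i$, which gives $\oap/\oa$ choices of $i$. Every $j$ is allowed, giving $p^{n-b_1}$ choices, and the product is the first formula.

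\textbf{Case $y\neq 0$.} Since $p$ is odd, $2y\neq 0$, so the first condition forces $\vep^i=1$, i.e.\ $\oe\mid i$. For a fixed such $i$, consider the map $j\mapsto p^{b_1} j y$ from $\Z/p^{n-b_1}$ to $\Zpn$. Its image is $p^{b_1+v(y)}\Zpn$ and its kernel has order $p^{v(y)}$ in the relevant range. So the second congruence has either $0$ or $p^{v(y)}$ solutions $j$, according to whether $v\bigl((\alpha^i-1)x\bigr)\ge b_1+v(y)$ (reading this as automatic when the left side is $0$ mod $p^n$). If $v(x)\ge b_1+v(y)$, every $i$ with $\oe\mid i$ works, giving $\frac{\oap}{\oe}p^{v(y)}$, which is the second formula. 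If $v(x)<b_1+v(y)$, we need $v(\alpha^i-1)\ge b_1+v(y)-v(x)>0$. By Lemma~\ref{lemma:val}, this means $\oab\mid i$ and $\voa+v(i)\ge b_1+v(y)-v(x)$ (or $\alpha^i=1$ in $\Zpn$).

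Next I count the $i$ modulo $\oap=\oabp p^{n-\voa}$ that are divisible by $\oabp$ and have $v(i)\ge \max\{0,\,b_1+v(y)-v(x)-\voa\}$. There are
$$p^{\min\{n-\voa,\ n-b_1-v(y)+v(x)\}}$$
of them. Multiplying by $p^{v(y)}$ gives $p^{\min\{n-\voa+v(y),\,n-b_1+v(x)\}}$, which is the third formula.

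The main obstacle is bookkeeping at the boundaries rather than any conceptual step. The kernel count $p^{v(y)}$ is only valid when $b_1+v(y)\le n$, and the congruences degenerate when quantities reach $p^n$. I would handle this by noting that in $W$ at least one of $x,y$ is a unit. If $b_1+v(y)>n$ then $y$ is not a unit, so $x$ is a unit and we are in the third case; there I would verify directly that the minimum above still gives the correct count, with the exponents capped at $n$. I would also check separately the case $\voa\ge n$, where $\alpha^{\oab}=1$ in $\Zpn$ and the count of $i$ is just $\oap/\oabp$.
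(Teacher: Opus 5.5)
Your proposal is correct and follows essentially the same route as the paper. Both reduce the stabilizer condition to $(\alpha^i-1)x=-p^{b_1}jy$ together with the $\vep$-condition, split into the same three cases, and count the $i$ using Lemma~\ref{lemma:val} and $\oap=\oabp p^{n-\voa}$, and the $j$ via the $p^{v(y)}$ solutions. Your checks of the boundary cases $b_1+v(y)>n$ and $\voa\ge n$ go slightly beyond the paper, which passes over them silently; in the second case the formula must be read with $\voa$ capped at $n$.
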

\begin{proof}
We begin by computing the stabilizer for $D$:
$$\mat{\alpha^i}{p^{b_1} j}{0}{\vep^i} \vec{x}{y} = \vec{x}{y}$$
if and only if
\begin{equation} \label{eq:vm}
(\alpha^i-1)x = -p^{b_1} j y
\end{equation}
and either $\vep=1$; or $\vep = -1$ and $i$ is even; or $y=0$.

Consider first the case $y=0$.  Then $x \in \Zpns$ so that by (\ref{eq:vm}) we must have
$\oa \mid i$.  Thus there are $\frac{\oap}{\oa}$ possible choices of $i$ and $p^{n-b_1}$ possible choices of $j$, resulting in a stabilizer of order $\frac{\oap}{\oa} p^{n-b_1}$.

Assume next that 
$$v(x) \geq b_1+v(y).$$
Then the equation (\ref{eq:vm}) can always be solved for $j$ regardless of $i$, and there will be precisely $p^{v(y)}$ such solutions.  If $\vep=-1$, then only the even $i$ give rise to elements of the stabilizer, so that in either case the stabilizer has order 
$\frac{\oap}{\oe} p^{v(y)}$.

Finally suppose that
$$v(x) < b_1+v(y).$$
Then (\ref{eq:vm}) only has solutions if 
$$v(\alpha^i-1) \geq b_1 + v(y)-v(x).$$
This requires $\oab \mid i$ and
$$ v(i) \geq b_1 -\voa+v(y)-v(x).$$
Since we in fact need $\oabp \mid i$, we see that the stabilizer has order
$$\frac{\oap}{\oabp p^{\max\{b_1-\voa+v(y)-v(x),0\}}} =
p^{\min\{n-b_1-v(y)+v(x),n-\voa\}}$$
since $\oap = \oabp p^{n-\voa}.$  As there are still $p^{v(y)}$ choices of $j$ for each such $i$, we conclude that the stabilizer has order
$$p^{\min\{n-b_1+v(x),n-\voa+v(y)\}}.$$

We turn now to the stabilizer for $I$, which is much simpler.  Indeed, 
$\mat{1}{p^{b_2} j}{0}{1}$ stabilizes $w$ if and only if $p^{b_2} j y=0$, which
occurs if and only if $v(j) \geq n-b_2-v(y)$.  There are therefore $p^{\min\{v(y),n-b_2\}}$ such $j$.
\end{proof}

\begin{cor} \label{c:orbit}
The orbit $D \cdot w$ of $w = \vec{x}{y} \in W$ has order
$$\# D \cdot w = \begin{cases} \oa & y = 0; \\
\oe p^{n-b_1-v(y)} & v(x) \geq b_1+v(y); \\
\oabp p^{\max\{n-v_o-v(x),n-b_1-v(y)\}} & v(x) < b_1+v(y), y \neq 0.
\end{cases}$$
The orbit $I \cdot w$ has order $p^{\max\{n-b_2-v(y),0\}}$.
\end{cor}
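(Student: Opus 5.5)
This follows from Lemma~\ref{l:stab} by the orbit--stabilizer theorem: $\#D\cdot w = \#D/\St_{D,w}$ and $\#I\cdot w = \#I/\St_{I,w}$. So the only work is to record the orders of $D$ and $I$ and simplify each quotient case by case.

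First, $\#D = \oap p^{n-b_1}$. The map $(i,j)\mapsto \mat{\alpha^i}{p^{b_1}j}{0}{\vep^i}$ with $0\le i<\oap$ and $0\le j<p^{n-b_1}$ is injective, because $i$ is determined modulo $\LCM(\oa,\oe)=\oap$ by the pair $(\alpha^i,\vep^i)$. Similarly, $\#I=p^{n-b_2}$.

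We then divide case by case.
\begin{itemize}
\item If $y=0$: $\oap p^{n-b_1}\big/\bigl(\tfrac{\oap}{\oa}p^{n-b_1}\bigr)=\oa$.
\item If $v(x)\ge b_1+v(y)$: $\oap p^{n-b_1}\big/\bigl(\tfrac{\oap}{\oe}p^{v(y)}\bigr)=\oe p^{n-b_1-v(y)}$.
\item In the remaining case we use $\oap=\oabp p^{n-\voa}$, recorded just before Lemma~\ref{l:stab}. This gives
$$\frac{\oabp p^{n-\voa}p^{n-b_1}}{p^{\min\{n-b_1+v(x),\,n-\voa+v(y)\}}} = \oabp p^{\max\{n-\voa-v(x),\,n-b_1-v(y)\}},$$
using $-\min\{A,B\}=\max\{-A,-B\}$.
\end{itemize}
Here $v_o$ in the statement is read as $\voa$.

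For $I$, the quotient is $p^{n-b_2}/p^{\min\{v(y),n-b_2\}}=p^{\max\{n-b_2-v(y),0\}}$.

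There is no real obstacle. The only points needing care are the order of $D$, in particular that the $i$-index runs over exactly $\oap$ distinct values, and the identity $\oap=\oabp p^{n-\voa}$. That identity follows from Lemma~\ref{lemma:val}, $\oa=\oab p^{\max\{n-\voa,0\}}$, together with $\oe\mid 2$ and $p$ odd. If $\voa>n$, the exponent should be read as $\max\{n-\voa,0\}$, but the case computation is unchanged.
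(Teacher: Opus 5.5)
Your proof is correct and is the argument the paper intends. The paper gives no separate proof of this corollary; it is meant as an immediate consequence of Lemma~\ref{l:stab} via orbit--stabilizer, using $\#D=\oap p^{n-b_1}$, $\#I=p^{n-b_2}$ and $\oap=\oabp p^{n-\voa}$, which is exactly what you do (and reading $v_o$ as $\voa$ is the right fix for the typo in the statement).
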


It is tedious, but not difficult, to convert Corollary~\ref{c:orbit} into the desired double coset types.

\begin{prop} \label{prop:dcpram}
The double coset type of the pair $(D,I)$ is given as follows:
\begin{description}
\item[$b_1=0$] 
\begin{multline*}
\frac{p-1}{\oab}p^{\voa-1} \times (\oa,1) +
\sum_{u=\voa+1}^{n-1} \frac{(p-1)^2}{\oabp} p^{n+\voa-u-2} \times (\oabp p^{n-\voa},p^{\max\{n-b_2-u,0\}}) + \\
\sum_{u=1}^{\min\{\voa,n-1\}} \frac{(p-1)^2}{\oabp}p^{n-2} \times (\oabp p^{n-u},p^{\max\{n-b_2-u,0\}})
 +
\frac{p-1}{\oe} p^{n-1} \times (\oe p^n,p^{n-b_2})
\end{multline*} \vspace{.2cm}
\item[$0 < b_1 < \voa$]
\begin{multline*}
\frac{p-1}{\oab}p^{\voa-1} \times (\oa,1) +
\frac{p-1}{\oabp}p^{n-1}(p^{b_1}-1) \times (\oabp p^{n-b_1},p^{n-b_2}) + \\
\frac{p-1}{\oe}p^{n-1} \times (\oe p^{n-b_1},p^{n-b_2}) + \\
\sum_{u=1}^{\voa-b_1-1} \frac{(p-1)2}{\oabp} p^{n+b_1-2} \times (\oabp p^{n-b_1-u},p^{\max\{n-b_2-u,0\}}) + \\
\sum_{u=\voa-b_1}^{n-b_1-1} \frac{(p-1)^2}{\oabp} p^{n+\voa-u-2} \times (\oabp p^{n-\voa},p^{\max\{n-b_2-u,0\}}) + \\
\frac{p-1}{\oabp} p^{\voa-1}(p^{b_1}-1) \times (\oabp p^{n-\voa},1)
\end{multline*} \vspace{.2cm}

\item[$b_1 \geq \voa$]
\begin{multline*}
\frac{p-1}{\oab}p^{\voa-1} \times (\oa,1) + 
\sum_{u=0}^{n-b_2-1} \frac{(p-1)^2}{\oabp}p^{n+\voa-u-2} \times (\oabp p^{n-\voa}, p^{n-b_2-u}) + \\
\frac{p-1}{\oabp}p^{\voa-1}(p^{b_2}-1) \times (\oabp p^{n-\voa}, 1) + \\
\sum_{u=1}^{b_1-\voa} \frac{(p-1)^2}{\oabp}p^{n+\voa-2} \times (\oabp p^{n-\voa-u},p^{n-b_2}) + \\
\frac{p-1}{\oabp}p^{n-1}(p^{\voa-1}-1)  \times (\oabp p^{n-b_1},p^{n-b_2}) + 
\frac{p-1}{\oe}p^{n-1} \times (\oe p^{n-b_1},p^{n-b_2})
\end{multline*}  

\end{description}
\end{prop}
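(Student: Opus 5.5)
Via Lemma~\ref{l:coset} the double cosets $D g \Gamma$ correspond to the orbits of $D$ on $W$, and a double coset of order $b\#\Gamma$ corresponds to an orbit of size $b$. Each $D$-orbit is a disjoint union of $I$-orbits, and since $I$ is normal in $D$ (it is the kernel-type subgroup of upper unipotent matrices inside $D$), all $I$-orbits inside a single $D$-orbit have the same size. So the plan is to partition $W$ into pieces on which both $\# D\cdot w$ and $\# I\cdot w$ are constant, using Corollary~\ref{c:orbit}. On each piece, count the elements $w$ and divide by $\# D\cdot w$ to get the multiplicity $a$. The pair $(b,c)$ is then $(\#D\cdot w,\#I\cdot w)$.

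The pieces are indexed by the pair $(v(x),v(y))$, with at least one of the two equal to $0$ since $w \in W$.
\begin{itemize}
\item The piece $y=0$ has $(p-1)p^{n-1}$ elements and orbits of size $\oa$. This gives $\frac{(p-1)p^{n-1}}{\oa} = \frac{p-1}{\oab}p^{\voa-1}$ orbits of type $(\oa,1)$, using $\oa=\oab p^{n-\voa}$ from Lemma~\ref{lemma:val}.
\item The pieces with $v(x)=0$ and $v(y)=u\ge 1$ (including $y=0$ counted separately) contain $(p-1)^2p^{2n-u-2}$ elements each. Here $v(x)<b_1+v(y)$ holds automatically unless $b_1=0$.
\item The pieces with $v(y)=0$ and $v(x)=u\ge 1$ have the same count. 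The piece $v(x)=v(y)=0$ has $(p-1)^2p^{2n-2}$ elements. For $v(x)\ge n$, i.e.\ $x=0$, there are $(p-1)p^{n-1}$ elements.
\end{itemize}
On each piece the quantities $\max\{n-\voa-v(x),\,n-b_1-v(y)\}$ and $\max\{n-b_2-v(y),0\}$ are explicit. The three cases of the proposition are exactly the possible relative positions of $b_1$ and $\voa$, and these determine which term of the maximum wins and whether the condition $v(x)\ge b_1+v(y)$ is met.

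Carrying this out case by case:
\begin{itemize}
\item When $b_1=0$, the vectors with $v(y)=0$ fall under $v(x)\ge b_1+v(y)$ and give orbits of size $\oe p^{n}$. This accounts for the term $\frac{p-1}{\oe}p^{n-1}\times(\oe p^n,p^{n-b_2})$ after combining with the residue classes of $x$.
\item The vectors with $v(x)=0$ and $v(y)=u\ge1$ give orbit size $\oabp p^{\max\{n-\voa,\,n-u\}}$. Splitting at $u=\voa$ produces the two sums.
\item When $0<b_1<\voa$ or $b_1\ge \voa$, the case $v(y)=0$, $v(x)=u$ splits according to whether $u\ge b_1$, which yields orbits of size $\oe p^{n-b_1}$, or $u<b_1$.
\item When $v(x)=0$ and $v(y)=u$, one compares $n-\voa$ with $n-b_1-u$, i.e.\ $u$ against $\voa-b_1$.
\end{itemize}
Pieces giving the same pair $(b,c)$ get merged, for instance all $v(x)<b_1$ with $v(y)=0$, which produces factors like $(p^{b_1}-1)$. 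This is done by summing geometric series such as $\sum_{u=1}^{b_1-1}(p-1)p^{n-u-1}\cdot(p-1)p^{n-1}$. The inertia component must be carried along, splitting further at $u=n-b_2$ where $p^{\max\{n-b_2-u,0\}}$ changes.

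The main obstacle is purely bookkeeping: getting the boundary pieces right. These are $v(y)=0$ versus $y=0$, $x=0$, the endpoints $u=\voa-b_1$ and $u=n-b_1$, and the merging of ranges with identical orbit data. I would verify each case by checking that $\sum a\, b$ equals $\#W=(p^2-1)p^{2n-2}$. I would also cross-check against the case $b_1=b_2=n$, where $D$ is diagonal and the answer must reduce to the type $\text{III}$ computation of Section~\ref{ss:iii} together with the product formula.
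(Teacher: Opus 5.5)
Your plan is the paper's proof: partition $W$ by the pair $(v(x),v(y))$, read off the $D$- and $I$-orbit sizes from Corollary~\ref{c:orbit}, divide element counts by $D$-orbit sizes, and merge ranges with identical orbit data, organized into the three cases $b_1=0$, $0<b_1<\voa$, $b_1\geq\voa$. Your remark that $I$ is normal in $D$ correctly justifies that the pair type is well defined, which the paper leaves implicit. Two small slips to fix when writing it out: for $v(x)=0$, $v(y)\geq 1$ the inequality $v(x)<b_1+v(y)$ holds even when $b_1=0$; and in the case $0<b_1<\voa$ the merged range $v(y)=0$, $v(x)<b_1$ must include $v(x)=0$, so your sample sum should start at $u=0$ to produce the factor $p^{b_1}-1$.
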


We write these double coset types as $\dcpm(\alpha,\varepsilon,b_1,b_2)$.

\begin{proof}
Consider first the subset of $W$ with $y=0$, which has order $(p-1)p^{n-1}$.  By Corollary~\ref{c:orbit} this divides into $D$-double cosets of order $\oa$ and $I$-double cosets of order $1$.  Thus the contribution to the double coset type from these elements of $W$ is
\begin{equation} \label{eq:dcpp1}
\frac{(p-1)p^{n-1}}{\oa} \times (\oa,1) = \frac{p-1}{\oab}p^{\voa-1} \times (\oa,1).
\end{equation}

For the remainder of the proof it is most convenient to consider three cases: $b_1=0$; $b_1 \geq \voa$; and $0 < b_1 < \voa$.  Consider first $b_1=0$.  If $v(y)=0$, then the $D$-orbits have order $\oe p^{n}$ and the $I$-orbits have order $p^{n-b_2}$.  There are $(p-1)p^{2n-1}$
such elements in $W$, yielding a contribution of
\begin{equation} \label{eq:dcpp2}
\frac{p-1}{\oe} p^{n-1} \times (\oe p^n,p^{n-b_2})
\end{equation}
from these elements.  The remaining elements have $v(x)=0$ and $1 \leq v(y) \leq n-1$: the $D$-orbits have order $\oabp p^{\max\{n-\voa,n-v(y)\}}$ and the $I$-orbits have order
$p^{\max\{n-b_2-v(y),0\}}$.  This yields a double coset type contribution of
\begin{multline} \label{eq:dcpp3}
\sum_{u=1}^{\min\{\voa,n-1\}} \frac{(p-1)^2}{\oabp}p^{n-2} \times (\oabp p^{n-u},p^{\max\{n-b_2-u,0\}}) + \\
\sum_{u=\voa+1}^{n-1} \frac{(p-1)^2}{\oabp} p^{n+\voa-u-2} \times (\oabp p^{n-\voa},p^{\max\{n-b_2-u,0\}}).
\end{multline}
Combining (\ref{eq:dcpp1}), (\ref{eq:dcpp2}) and (\ref{eq:dcpp3}) yields the full double coset type in this case.

Consider next the case $b_1 \geq \voa$.  If $v(x)=0$ and $0 \leq v(y) \leq n-1$, then the $D$-orbits have order
$\oabp p^{n-\voa}$.  If $0 \leq v(y) \leq n-b_2$, then the $I$-orbits have order $p^{n-b_2-v(y)}$, while if $n-b_2 \leq v(y) \leq n-1$ then they have order $1$.  This results in a contribution of
\begin{multline} \label{eq:dcpp4}
\sum_{u=0}^{n-b_2-1} \frac{(p-1)^2}{\oabp}p^{n+\voa-u-2} \times (\oabp p^{n-\voa}, p^{n-b_2-u}) + \\
\frac{p-1}{\oabp}p^{\voa-1}(p^{b_2}-1) \times (\oabp p^{n-\voa}, 1).
\end{multline}

If $v(y)=0$ and $1 \leq v(x) \leq b_1-\voa$, then the $D$-orbits have order $\oabp p^{n-\voa-v(x)}$ and
the $I$-orbits have order $p^{n-b_2}$.  If  $b_1-\voa+1 \leq v(x) \leq b_1-1$, then the $D$-orbits have order
$\oabp p^{n-b_1}$ and the $I$-orbits have order $p^{n-b_2}$.  Finally, if $b_1 \leq v(x) \leq n$, then
the $D$-orbits have order $\oe p^{n-b_1}$ and the $I$-orbits have order $p^{n-b_2}$.  The overall contribution to the double coset type is thus
\begin{multline} \label{eq:dcpp5}
\sum_{u=1}^{b_1-\voa} \frac{(p-1)^2}{\oabp}p^{n+\voa-2} \times (\oabp p^{n-\voa-u},p^{n-b_2}) + \\
\frac{p-1}{\oabp}p^{n-1}(p^{\voa-1}-1)  \times (\oabp p^{n-b_1},p^{n-b_2}) + 
\frac{p-1}{\oe}p^{n-1} \times (\oe p^{n-b_1},p^{n-b_2}).
\end{multline}
Combining (\ref{eq:dcpp1}), (\ref{eq:dcpp4}) and (\ref{eq:dcpp5}) yields the double coset type in this case.

It remains to consider the case $0 < b_1 < \voa$.  If $v(y)=0$ and $0 \leq v(x) \leq b_1-1$, then
the $D$-orbits have order $\oabp p^{n-b_1}$ and the $I$-orbits have order $p^{n-b_2}$, while
if $b_1 \leq v(x) \leq n$, the $D$-orbits have order $\oe p^{n-b_1}$ and the $I$-orbits have order $p^{n-b_2}$, resulting in a double coset type contribution of
\begin{equation} \label{eq:dcpp6}
\frac{p-1}{\oabp}p^{n-1}(p^{b_1}-1) \times (\oabp p^{n-b_1},p^{n-b_2}) +
\frac{p-1}{\oe}p^{n-1} \times (\oe p^{n-b_1},p^{n-b_2}).
\end{equation}

If $v(x)=0$ and $1 \leq v(y) < \voa-b_1$, then the $D$-orbits have order $\oabp p^{n-b_1-v(y)}$ and
the $I$-orbits have order $p^{\max\{n-b_2-v(y),0\}}$.  If $\voa -b_1 \leq v(y) < n-b_1$, then the $D$-orbits have order $\oabp p^{n-\voa}$ and the $I$-orbits have order $p^{n-b_2-v(y)}$.  Finally, if
$n-b_1 \leq v(y) \leq n-1$, then the $D$-orbits have order $\oabp p^{n-\voa}$ and the $I$-orbits have order $1$:
\begin{multline} \label{eq:dcpp7}
\sum_{u=1}^{\voa-b_1-1} \frac{(p-1)2}{\oabp} p^{n+b_1-2} \times (\oabp p^{n-b_1-u},p^{\max\{n-b_2-u,0\}}) + \\
\sum_{u=\voa-b_1}^{n-b_1-1} \frac{(p-1)^2}{\oabp} p^{n+\voa-u-2} \times (\oabp p^{n-\voa},p^{\max\{n-b_2-u,0\}}) + \\
\frac{p-1}{\oabp} p^{\voa-1}(p^{b_1}-1) \times (\oabp p^{n-\voa},1)
\end{multline}
Combining (\ref{eq:dcpp1}), (\ref{eq:dcpp6}) and (\ref{eq:dcpp7}) mercifully completes the proof.
\end{proof}

\subsection{Good ordinary primes}

For this section fix $\alpha \in \Zpns$ and consider the pair $(D,I)$ given by
\begin{align*}
D &= \left\{ \mat{\alpha^{k}a}{b}{0}{\alpha^{-k}} ; 0 \leq k < \oa, a \in \Zpns, b \in \Zp  \right\} \\
I &= \left\{ \mat{a}{b}{0}{1} ; a \in \Zpns, b \in \Zpn \right\}.
\end{align*}
We compute the double coset type of the pair $(D,I)$ using the methods of the previous section.

\begin{lemma} \label{l:stord}
For $w = \vec{x}{y} \in W$, we have
\begin{align*}
\# \St_{D,w} &= \begin{cases} (p-1)p^{n-1} & v(y) = 0 \\
p^{\min\{n+v(y),2n-\voa\}} & v(x)=0, \quad 0 < v(y) < n \\
\oab p^{2n-\voa} & v(x)=0, \quad y=0
\end{cases} \\
\# \St_{I,w} &= \begin{cases} (p-1)p^{n-1} & v(y) = 0 \\
p^n & v(x)=0, \quad v(y) > 0.\end{cases}
\end{align*}
\end{lemma}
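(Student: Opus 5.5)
The plan is to compute the stabilizers directly from the action on column vectors, exactly as in the proof of Lemma~\ref{l:stab}. First I would note that the parametrization of $D$ by triples $(k,a,b)$ with $0 \leq k < \oa$, $a \in \Zpns$, $b \in \Zpn$ is injective: the lower right entry $\alpha^{-k}$ determines $k$ modulo $\oa$, and the other two entries then determine $a$ and $b$. (I read the condition $b \in \Zp$ in the definition of $D$ as $b \in \Zpn$.) Counting stabilizer elements therefore amounts to counting triples. For $w = \vec{x}{y} \in W$, the matrix indexed by $(k,a,b)$ fixes $w$ if and only if
$$\alpha^k a x + b y = x \qquad\text{and}\qquad (\alpha^{-k}-1) y = 0.$$
The whole argument consists of solving this system in each of the three cases.

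\emph{Case $v(y)=0$.} The second equation forces $\alpha^k = 1$, so $k=0$. The first equation then reads $b y = x - a x$. Since $y$ is a unit, this has exactly one solution $b$ for each $a \in \Zpns$. This gives $(p-1)p^{n-1}$ stabilizer elements.

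\emph{Case $v(x)=0$ and $y \neq 0$.} Now $x$ is a unit. For any $k$ and $b$, the first equation determines $a = \alpha^{-k}(x-by)x^{-1}$. This $a$ is automatically a unit, because $v(y) > 0$ makes $x - by$ a unit. So every admissible $k$ contributes exactly $p^n$ elements, one for each choice of $b$.
\begin{itemize}
\item If $y=0$, every $k$ is admissible, giving $\oa p^n$ elements. By Lemma~\ref{lemma:val}, $\oa = \oab p^{n-\voa}$ when $\voa \leq n$, which is the stated value $\oab p^{2n-\voa}$. I would remark that for $\voa > n$ the correct exponent replaces $n-\voa$ by $\max\{n-\voa,0\}$.
\item If $0 < v(y) < n$, the condition $(\alpha^{-k}-1)y = 0$ means $v(\alpha^k - 1) \geq n - v(y)$. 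By Lemma~\ref{lemma:val} this holds exactly when $\oab \mid k$ and $\voa + v(k) \geq n - v(y)$. Counting such $k$ in $[0,\oa)$ gives
$$\frac{\oa}{\oab\, p^{\max\{n-v(y)-\voa,\,0\}}} = p^{\min\{v(y),\,n-\voa\}}.$$
Multiplying by $p^n$ gives $p^{\min\{n+v(y),\,2n-\voa\}}$.
\end{itemize}

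\emph{Stabilizer in $I$.} Here the fixing conditions are $a x + b y = x$ and $y = y$.
\begin{itemize}
\item If $v(y)=0$, then $b$ is determined by $a$, giving $(p-1)p^{n-1}$ elements.
\item If $v(x)=0$ and $v(y)>0$, then $a = (x-by)x^{-1}$ is a unit for every $b$, giving $p^n$ elements.
\end{itemize}

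The only genuinely delicate point is the count of admissible $k$ in the middle case. There I need the formula $\oa = \oab p^{\max\{n-\voa,0\}}$ from Lemma~\ref{lemma:val}, together with the fact that the admissible $k$ form the subgroup of $\Z/\oa$ of multiples of $\oab p^{\max\{n-v(y)-\voa,0\}}$. Everything else is immediate once one observes that $x - by$ is a unit whenever $v(x)=0$ and $v(y)>0$.
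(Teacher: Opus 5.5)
Your proposal is correct and follows the paper's proof essentially step for step. Like the paper, you write down the fixing equations $(\alpha^k a-1)x=-by$ and $\alpha^{-k}y=y$ (respectively $(a-1)x=-by$ for $I$), solve for $b$ when $y$ is a unit or for $a$ when $x$ is a unit and $v(y)>0$, and count the admissible $k$ using Lemma~\ref{lemma:val}. Your side remarks make explicit three points the paper leaves implicit, and all three are accurate: the parametrization of $D$ by $(k,a,b)$ is injective, $b$ should range over $\Zpn$, and the formula $\oa=\oab p^{n-\voa}$ silently assumes $\voa\le n$.
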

\begin{proof}
We begin with $I$ which is simpler.  Consider the equation
$$\mat{a}{b}{0}{1} \vec{x}{y} = \vec{x}{y}$$
or equivalently
$$(a-1)x = -by.$$
Note that if $v(y)=0$, then one can solve for $b$ in terms of $a$, so that the stabilizer has
order $(p-1)p^{n-1}$.  If $v(x)=0$ and $v(y)>0$, then one can instead solve for $a$ in terms of $b$ and the stabilizer has order $p^n$; since $v(y)>0$, the resulting value of $a$ is guaranteed to be a unit.

Turning to $D$,
a matrix $\mat{\alpha^k a}{b}{0}{\alpha^{-k}}$ stabilizes $w=\vec{x}{y} \in W$ if and only
\begin{align*}
(\alpha^k a-1) x &= -by \\
\alpha^{-k}y &= y.
\end{align*}
If $v(y)=0$, then the second equation forces $k=0$ while we can solve the first for $b$ in terms of $a$.  It follows that $\St_{w}$ has order $(p-1)p^{n-1}$ as before.  At the other extreme, if $y=0$
then the second equation is vacuous and the first becomes $a=\alpha^{-k}$.  Since $b$ is arbitrary, there are $\oa p^{n} = \oab p^{2n-\voa}$ solutions.

If $v(x)=0$ and $0 < v(y) < n$, then the second equation forces $\oab \mid k$ and
$$v(k) \geq n -\voa-v(y).$$
Thus there are $p^{\min\{v(y),n-\voa\}}$ choices for $k$.  The first equation can be solved for $a$ in terms of $b$ and $\alpha^k$, so that in total the stabilizer has order
$p^{\min\{n+v(y),2n-\voa\}}.$
\end{proof}

\begin{cor}
The orbits of $w \in W$ have order
\begin{align*}
\# D \cdot w &= \begin{cases} \oab p^{2n-\voa} & v(y) =0 \\
\oab(p-1) p^{\max\{2n-1-\voa-v(y),n-1\}} & v(x) = 0, \quad 0 < v(y) < n \\
p^{n-1} & v(x)=0, \quad y=0
\end{cases}\\
\# I \cdot w &= \begin{cases} p^n & v(y) = 0 \\
(p-1)p^{n-1} & v(x)=0, \quad v(y)>0 \end{cases}
\end{align*}
\end{cor}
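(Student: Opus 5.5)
The plan is to apply the orbit--stabilizer theorem to the two finite groups $D$ and $I$ acting on $W$, using the stabilizer orders of Lemma~\ref{l:stord}. In each case the orbit size is $\#D/\#\St_{D,w}$ or $\#I/\#\St_{I,w}$, so I only need the orders of $D$ and $I$ and then a case-by-case division.

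\textbf{Step 1: the orders of $D$ and $I$.} The matrices in $D$ are
$$\mat{\alpha^{k}a}{b}{0}{\alpha^{-k}}, \qquad 0 \le k < \oa,\ a \in \Zpns,\ b \in \Zpn .$$
The parametrization $(k,a,b)$ is injective. Indeed, the lower right entry $\alpha^{-k}$ recovers $k$ modulo $\oa$, then the upper left entry recovers $a$, and $b$ is read off directly. Hence
$$\#D = \oa\,(p-1)p^{n-1}\,p^n = \oab\,(p-1)\,p^{3n-1-\voa},$$
using $\oa = \oab p^{n-\voa}$ from Lemma~\ref{lemma:val}. Similarly $\#I = (p-1)p^{n-1}\cdot p^n = (p-1)p^{2n-1}$.

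\textbf{Step 2: the $I$-orbits.} If $v(y)=0$, dividing $\#I$ by $(p-1)p^{n-1}$ gives $p^n$. If $v(x)=0$ and $v(y)>0$, dividing by $p^n$ gives $(p-1)p^{n-1}$. These are the two stated values.

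\textbf{Step 3: the $D$-orbits.}
\begin{itemize}
\item If $v(y)=0$, the stabilizer has order $(p-1)p^{n-1}$, so the orbit has order $\oab p^{2n-\voa}$.
\item If $v(x)=0$ and $0<v(y)<n$, the stabilizer has order $p^{\min\{n+v(y),\,2n-\voa\}}$. The quotient is
$$\oab(p-1)\,p^{3n-1-\voa-\min\{n+v(y),\,2n-\voa\}} = \oab(p-1)\,p^{\max\{2n-1-\voa-v(y),\,n-1\}},$$
which matches the middle line of the statement.
\item If $y=0$, I take the third line of Lemma~\ref{l:stord}, stabilizer order $\oab p^{2n-\voa}$, and divide it into $\#D$ in the same way. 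As a cross-check, I will also count the orbit directly: it is $\{(\alpha^k a x, 0)\}$ as $k$ and $a$ vary.
\end{itemize}

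\textbf{Expected obstacle.} The only delicate points are bookkeeping.
\begin{itemize}
\item I must use the correct order of $D$. This means justifying injectivity of the parametrization and keeping straight the relation $\oa=\oab p^{n-\voa}$ when $\voa$ could exceed $n$; I would assume $\voa\le n$ as in the applications.
\item I must rewrite $3n-1-\voa-\min\{\cdot\}$ as the stated $\max\{\cdot\}$ carefully.
\item The $y=0$ line is where the direct orbit count and the orbit--stabilizer division must be reconciled. I expect this to be the step needing the most care, and I would redo both counts explicitly there.
\end{itemize}
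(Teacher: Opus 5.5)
You take the same route as the paper. The corollary has no separate proof there; it is read off from Lemma~\ref{l:stord} by orbit--stabilizer, using $\#D=\oa(p-1)p^{2n-1}$ (with $b\in\Zpn$, as you silently and correctly read it) and $\#I=(p-1)p^{2n-1}$. Both $I$-lines and the first two $D$-lines come out as you say, including the exponent identity $3n-1-\voa-\min\{n+v(y),2n-\voa\}=\max\{2n-1-\voa-v(y),n-1\}$.

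The gap is the line you deferred, $y=0$, and it is in the statement rather than in your method. Your two counts agree with each other, and both contradict the stated value. Orbit--stabilizer gives
\[
\frac{\#D}{\#\St_{D,w}}=\frac{\oab(p-1)p^{3n-1-\voa}}{\oab p^{2n-\voa}}=(p-1)p^{n-1}.
\]
Counting directly, for $v(x)=0$ the set $\{\alpha^k a x : 0\le k<\oa,\ a\in\Zpns\}$ is all of $\Zpns$. So $D\cdot w=\Zpns\times\{0\}$ has $(p-1)p^{n-1}$ elements; this count does not involve $\voa$ at all, so your $\voa\le n$ caveat is irrelevant here. The value $p^{n-1}$ in the third line is therefore a misprint and cannot be proved. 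The correct value is $(p-1)p^{n-1}$, which is what the paper itself uses in the proof of Proposition~\ref{prop:dcpord}: the $(p-1)p^{n-1}$ vectors with $y=0$ form a single $D$-orbit, giving the term $1\times\bigl((p-1)p^{n-1},(p-1)p^{n-1}\bigr)$. As written, your proposal leaves this case open while implicitly promising the stated value. You should carry out the division, commit to $(p-1)p^{n-1}$, and flag the correction to the statement.
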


\begin{prop} \label{prop:dcpord}
The double coset type of the pair $(D,I)$ is
\begin{multline*}
\frac{p-1}{\oab} p^{\voa-1} \times \oab (p^{2n-\voa},p^n) + \\
\sum_{u=1}^{n-\voa-1}  \frac{p-1}{\oab}p^{\voa-1}    \times (\oab(p-1)p^{2n-1-\voa-u},(p-1)p^{n-1}) + \\
\frac{p^{\min\{n-1,\voa\}}-1}{\oab} \times (\oab(p-1)p^{n-1},(p-1)p^{n-1}) +
1 \times ((p-1)p^{n-1},(p-1)p^{n-1})).
\end{multline*}
\end{prop}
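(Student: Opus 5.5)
The plan is the same as in the proof of Proposition~\ref{prop:dcpram}. Partition $W$ according to the valuations of the coordinates of $w=\vec{x}{y}$. Read off from the preceding corollary the orders of the $D$-orbit and the $I$-orbit of $w$. Then count how many elements of $W$ fall into each stratum. A stratum of size $M$ in which the $D$-orbits have order $B$ and the $I$-orbits have order $C$ contributes $\frac{M}{B}\times(B,C)$ to the double coset type of the pair. This uses that $G/\Gamma \cong W$ (Lemma~\ref{l:coset}) and the orbit--double coset dictionary used in Lemma~\ref{lemma:dcpprod}.

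The strata, taken in the order of the statement, are as follows.

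\begin{enumerate}
\item The set $v(y)=0$ has $(p-1)p^{2n-1}$ elements. Here the $D$-orbits have order $\oab p^{2n-\voa}$ and the $I$-orbits have order $p^n$. This gives the first term, with coefficient $(p-1)p^{2n-1}/(\oab p^{2n-\voa}) = \frac{p-1}{\oab}p^{\voa-1}$.
\item For each $u$ with $0<u<n$, the set $v(x)=0$, $v(y)=u$ has $(p-1)^2p^{2n-u-2}$ elements. The $D$-orbit order is $\oab(p-1)p^{\max\{2n-1-\voa-u,\,n-1\}}$ and the $I$-orbit order is $(p-1)p^{n-1}$.
\begin{enumerate}
\item For $u<n-\voa$ the maximum is $2n-1-\voa-u$, and the coefficient is $\frac{(p-1)^2p^{2n-u-2}}{\oab(p-1)p^{2n-1-\voa-u}} = \frac{p-1}{\oab}p^{\voa-1}$. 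This yields the sum over $1\le u\le n-\voa-1$.
\item For $n-\voa\le u\le n-1$ all these $D$-orbits have the common order $\oab(p-1)p^{n-1}$. Summing $(p-1)^2p^{2n-u-2}$ over this range gives $(p-1)p^{n-1}(p^{\min\{\voa,n-1\}}-1)$ elements, hence the coefficient $\frac{p^{\min\{n-1,\voa\}}-1}{\oab}$. The lower limit must be truncated at $u=1$ when $\voa\ge n$, which is exactly where the $\min$ comes from.
\end{enumerate}
\item The set $y=0$ consists of the $(p-1)p^{n-1}$ elements with $x$ a unit. These form a single $D$-orbit and a single $I$-orbit, each of order $(p-1)p^{n-1}$. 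This gives the final term $1\times((p-1)p^{n-1},(p-1)p^{n-1})$.
\end{enumerate}

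The main obstacle is keeping the orbit orders straight. The orbit orders in the preceding corollary for the cases $v(y)=0$ and $y=0$ do not look consistent with the stabilizer orders in Lemma~\ref{l:stord} divided into $\#D=\oa(p-1)p^{2n-1}$. So I would first recompute them directly by orbit--stabilizer.

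For $y=0$, $D\cdot w=\{(\alpha^k a x+\dots,0)\}$, which is all unit multiples of $x$. Its order is $(p-1)p^{n-1}$, consistent with the final term of the statement.

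For $v(y)=0$, $y$ can be moved by $\alpha^{-k}$ and $x$ can be made arbitrary via $b$. This gives order $\oa p^n=\oab p^{2n-\voa}$. I would check this against $\#D/\#\St_{D,w}$, and the $I$-orbit order $p^n$ against $\#I/\#\St_{I,w}$.

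I would also verify that the first entry of the first term of the statement should be read as $(\oab p^{2n-\voa},p^n)$. As a final sanity check, I would confirm that the total count $\sum(\text{coefficient})\cdot(\text{$D$-orbit order})$ equals $\#W=(p^2-1)p^{2n-2}$.
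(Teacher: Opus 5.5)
Your proposal is correct and is essentially the paper's own proof. It uses the same strata ($v(y)=0$; $v(x)=0$ with $0<v(y)<n-\voa$; $\max\{1,n-\voa\}\le v(y)\le n-1$; $y=0$), the same element counts and orbit orders, and the same reading of the first term as $(\oab p^{2n-\voa},p^n)$. One correction to your side remark: only the $y=0$ entry of the corollary is off (it should be $(p-1)p^{n-1}$, not $p^{n-1}$); the $v(y)=0$ entry $\oab p^{2n-\voa}=\oa p^n$ does agree with $\#D/\#\St_{D,w}$, as your own recomputation shows.
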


We write this double coset type as $\dcpo(\alpha).$

\begin{proof}
There are $(p-1)p^{2n-1}$ elements of $W$ with $v(y)=0$, all lying $D$-orbits of order
$\oab p^{2n-\voa}$ and $I$-orbits of order $p^n$:
$$\frac{p-1}{\oab} p^{\voa-1} \times \oab (p^{2n-\voa},p^n).$$
When $v(x)=0$ and each $0 < v(y) < n-\voa$, there are
$(p-1)^2p^{2n-2-v(y)}$ elements lying in $D$-orbits of order $\oab (p-1)p^{2n-1-\voa-v(y)}$ and
$I$-orbits of order $(p-1)p^{n-1}$:
$$\sum_{u=1}^{n-\voa-1}  \frac{p-1}{\oab}p^{\voa-1}    \times (\oab(p-1)p^{2n-1-\voa-u},(p-1)p^{n-1}).$$
The $(p-1)p^{n-1}(p^{\min\{n-1,\voa\}}-1)$ elements with $\max\{1,n-\voa\} \leq v(y) \leq n-1$ lie in $D$-orbits of order
$\oab (p-1)p^{n-1}$ and $I$-orbits of order $(p-1)p^{n-1}$:
$$\frac{p^{\min\{n-1,\voa\}}-1}{\oab} \times (\oab(p-1)p^{n-1},(p-1)p^{n-1}).$$
Finally, the $(p-1)p^{n-1}$ elements with $y=0$ lie in $D$-orbits of order $(p-1)p^{n-1}$ and $I$-orbits of order $(p-1)p^{n-1}$:
$$1 \times ((p-1)p^{n-1},(p-1)p^{n-1}).$$
\end{proof}

\section{Distribution of factorization types} \label{sec:dist}

Fix an odd integer $N=p_1^{n_1} \cdots p_m^{n_m}$ and let $L/E$ be a Galois extension of number fields with $\Gal(L/E) \cong \GL_2(\Z/N)$.  Let $\Gamma$ denote the subgroup $\left\{ \mat{1}{*}{0}{*} \right\}$ of $G$ and set $K=L^\Gamma$.  We may apply our results to determine the factorization types which occur for the extension $K/E$ as well as the density of each.

Let $\C$ be a conjugacy class in $\GL_2(\Z/N)$.  There are conjugacy classes $\C_i \subseteq \GL_2(\Z/p_i^{n_i})$ with representatives $g_i$ as in Table~\ref{t:cc} such that
$$\C = \C_1 \times \cdots \times \C_m.$$
In particular, this allows one to compute $\#\C$.  Using Tables~\ref{t:dcp2} and~\ref{t:dcp}, one
can also compute the double coset type $\dcp_N(\C)$ of any $g \in \C$ as
$$\dcp_N(\C) = \dcp_{p_1^{n_1}}(g_1)
\otimes \cdots \otimes \dcp_{p_m^{n_m}}(g_m).$$
Here of course the indices refer to the group in which the double coset types are computed.

\begin{prop}
Every prime in $E$, unramified in $L/E$, has factorization type in $K/E$ equal to $\dcp(\C)$ for some 
conjugacy class $\C \subseteq \GL_2(\Z/N)$.
If $\lambda$ is a factorization type which occurs for $K/E$, then the density of primes of $E$ with factorization type $\lambda$ is
$$\frac{1}{[L:E ]} \cdot \sum_{\dcp(\C)=\lambda} \#\C.$$
\end{prop}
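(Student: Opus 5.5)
The plan is to combine Lemma~\ref{lemma:factor} (via Proposition~\ref{prop:pdc}) with the Chebotarev density theorem.

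Let $\q$ be a prime of $E$ unramified in $L/E$ and fix a prime $\QQ$ of $L$ above it. The inertia group $I$ is trivial, so the decomposition group $D$ is isomorphic to $\Gal$ of the residue field extension, hence cyclic, generated by $\Frob_\QQ$. By Lemma~\ref{lemma:factor} applied with $I=\{1\}$, the factorization type of $\q$ in $K$ (the number of primes with each residual degree, all with ramification index $1$) is exactly the double coset type of $D=\left<\Frob_\QQ\right>$. Changing $\QQ$ to $g\QQ$ conjugates $D$ and $\Frob_\QQ$ by $g$. By Lemma~\ref{l:myst} the double coset type depends only on the conjugacy class $\C$ of $\Frob_\QQ$, i.e.\ on the Frobenius class of $\q$. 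Proposition~\ref{prop:main} records the same fact for the individual factors.

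Next I need that this type equals $\dcp(\C)$ as defined just above. Write $g=\Frob_\QQ=(g_1,\dots,g_m)$ under $\GL_2(\Z/N)\cong\prod\GL_2(\Z/p_i^{n_i})$, with $\Gamma=\prod\Gamma_i$. The projection of $\left<g\right>$ to the $i$-th factor is $\left<g_i\right>$. Lemma~\ref{lemma:dcpprod}, applied inductively to the factors, therefore gives that the double coset type of $\left<g\right>$ is $\dcp_{p_1^{n_1}}(g_1)\otimes\cdots\otimes\dcp_{p_m^{n_m}}(g_m)=\dcp_N(\C)$. The one point to check is that the inductive application is legitimate. Lemma~\ref{lemma:dcpprod} does not require $D=D_1\times D_2$, and the projection of $\left<g\right>$ to a partial product $\prod_{i\le j}\GL_2$ is again the cyclic group $\left<(g_1,\dots,g_j)\right>$, so the induction goes through. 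This proves the first assertion.

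For the density statement, the Chebotarev density theorem says that for each conjugacy class $\C$ of $G=\Gal(L/E)$, the set of unramified primes with Frobenius class $\C$ has density $\#\C/\#G=\#\C/[L:E]$. The set of unramified primes with factorization type $\lambda$ is the disjoint union, over the classes $\C$ with $\dcp(\C)=\lambda$, of these Chebotarev sets. Ramified primes are finite in number and do not affect density. Summing the densities gives the stated formula. I expect no real obstacle; the only subtle point is the compatibility of the product decomposition with cyclic subgroups handled in the second paragraph.
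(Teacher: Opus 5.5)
Your proposal is correct and is essentially the paper's argument, which simply declares the result immediate from Propositions~\ref{prop:main} and~\ref{p:ccpn} plus the Chebotarev density theorem. You make the intermediate steps explicit (Lemmas~\ref{lemma:factor}, \ref{l:myst} and~\ref{lemma:dcpprod}), and you rightly do without the classification of Proposition~\ref{p:ccpn}, which only serves to enumerate the classes and their sizes in practice. Your insistence that each partial projection of $\left<g\right>$ is again cyclic is exactly the right point: the $\LCM$ orbit count in the proof of Lemma~\ref{lemma:dcpprod} holds for cyclic $D$ but not for arbitrary $D$.
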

\begin{proof}
This is immediate from
Propositions~\ref{prop:main} and~\ref{p:ccpn} combined with the Chebotarev density theorem.
\end{proof}

In practice, of course, one computes proportions of factorization types by enumerating over all conjugacy classes.

\begin{example}
Table~\ref{t:63} gives the proportion of each factorization type in the case $N=63$.
We illustrate the sort of computation which, repeated many times, goes into the creation of the table.  Consider the conjugacy class $\C$ in $\GL_2(\Z/63)$ which is the product of the conjugacy class $\text{I}^-_1(2,2)_9$ of $\GL_2(\Z/9)$ and $\text{III}(2,6)_7$ of $\GL_2(\Z/7)$.
The former is represented by the matrix $\mat{2}{6}{3}{2} \in \GL_2(\Z/9)$ while the latter is represented by
the matrix $\mat{2}{0}{0}{6} \in \GL_2(\Z/7)$.  Applying the Chinese remainder theorem, $\C$ is thus represented by $\mat{2}{42}{21}{20}$.

The conjugacy class $\text{I}^-_1(2,2)_9$ has order 8.  As
$\sigma_{\bar{2}}=2$ and $v_{2}=2$,
by Proposition~\ref{prop:main}, it has double coset type $\dcp(2;1,u_2)$ where
$$u_2=v\bigl(6-z^1(2)\bigr).$$
We compute that $z^1(2)=3$, so that $u_2=1$.  Thus the double coset type is simply
$\dcp_9(2;1,1) = 12 \times 6$.

The conjugacy class $\text{III}(2,6)_7$ has order 56.  We have
$\sigma_{\bar{2}}=3$, $v_{2} = 1$, $\sigma_{\bar{6}}=2$, $v_6=1$.  Thus the double coset type
is $\dcp_7(3,2;1) = 3 \times 2 + 2 \times 3 + 6 \times 6$.

We conclude that
$$\dcp_{63}(\C) = (12 \times 6)_{9} \otimes (3 \times 2 + 2 \times 3 + 6 \times 6)_{7} =
(576 \times 6)_{63}.$$
This conjugacy class contributes $8 \cdot 56 = 448$ to the count for that double coset type.  Repeating such calculations over all conjugacy classes yields 
Table~\ref{t:63}.

The least common decomposition type, of course, is splitting completely, with only $\frac{1}{7114162}$ of rational primes exhibiting it.  The most common is
decomposing as a product of 144 primes of degree 24, which occurs for about 22.25\% of rational primes.  The most distinct residual degrees in any factorization is six, which occurs for five different types.  The largest residual degree to occur is 168, which occurs for 3.79\% of primes.
The most interesting factorization type, perhaps, is
$$18 \times 2 + 12 \times 6 + 18 \times 14 + 18 \times 18 + 12 \times 42 + 18 \times 126 $$
which occurs for 0.44\% of primes.  It has the maximal six distinct residual degrees, ranging from 2 to 126.  It is the double coset type of the nine conjugacy classes represented by the matrices
$$\left\{ \mat{\alpha}{0}{1}{\alpha} ; \alpha \in \{8,13,20,29,34,41,50,55,62\} \right\}.$$
\end{example}

\begin{table}  \tiny
$$\begin{array}{lr}
\text{Type} & \text{Number} \\ \hline 
3456 \times 1  &  1 \\
432 \times 1 + 1512 \times 2  &  56 \\
108 \times 1 + 378 \times 2 + 108 \times 3 + 378 \times 6  &  1792 \\
36 \times 1 + 126 \times 2 + 24 \times 3 + 84 \times 6 + 36 \times 9 + 126 \times 18  &  12096 \\
36 \times 1 + 198 \times 2 + 504 \times 6  &  12096 \\
36 \times 1 + 36 \times 2 + 54 \times 6 + 36 \times 7 + 36 \times 14 + 54 \times 42  &  10368 \\
36 \times 1 + 198 \times 2 + 36 \times 7 + 198 \times 14  &  5184 \\
864 \times 1 + 864 \times 3  &  32 \\
108 \times 1 + 108 \times 3 + 504 \times 6  &  3584 \\
36 \times 1 + 24 \times 3 + 126 \times 6 + 36 \times 9 + 126 \times 18  &  24192 \\
36 \times 1 + 24 \times 3 + 36 \times 7 + 36 \times 9 + 24 \times 21 + 36 \times 63  &  10368 \\
108 \times 1 + 108 \times 3 + 108 \times 7 + 108 \times 21  &  1536 \\
288 \times 1 + 192 \times 3 + 288 \times 9  &  216 \\
432 \times 1 + 504 \times 6  &  112 \\
432 \times 1 + 432 \times 7  &  48 \\
1728 \times 2  &  167 \\
432 \times 2 + 12 \times 3 + 426 \times 6  &  12096 \\
18 \times 2 + 36 \times 3 + 120 \times 6 + 36 \times 9 + 126 \times 18  &  24192 \\
54 \times 2 + 12 \times 3 + 48 \times 6 + 54 \times 14 + 12 \times 21 + 48 \times 42  &  10368 \\
216 \times 2 + 504 \times 6  &  12656 \\
18 \times 2 + 12 \times 6 + 18 \times 14 + 18 \times 18 + 12 \times 42 + 18 \times 126  &  31104 \\
54 \times 2 + 54 \times 6 + 54 \times 14 + 54 \times 42  &  25344 \\
144 \times 2 + 96 \times 6 + 144 \times 18  &  12744 \\
216 \times 2 + 216 \times 14  &  5328 \\
1152 \times 3  &  47058 \\
12 \times 3 + 570 \times 6  &  338688 \\
36 \times 3 + 126 \times 6 + 36 \times 9 + 126 \times 18  &  157248 \\
12 \times 3 + 66 \times 6 + 12 \times 21 + 66 \times 42  &  114048 \\
288 \times 3 + 288 \times 9  &  49464 \\
36 \times 3 + 36 \times 9 + 36 \times 21 + 36 \times 63  &  51840 \\
144 \times 3 + 144 \times 21  &  41184 \\
864 \times 4  &  10020 \\
108 \times 4 + 252 \times 12  &  24192 \\
72 \times 4 + 48 \times 12 + 72 \times 36  &  18144 \\
108 \times 4 + 108 \times 28  &  5184 \\
576 \times 6  &  386814 \\
144 \times 6 + 144 \times 18  &  305640 \\
18 \times 6 + 18 \times 18 + 18 \times 42 + 18 \times 126  &  155520 \\
72 \times 6 + 72 \times 42  &  237600 \\
432 \times 8  &  33648 \\
108 \times 8 + 108 \times 24  &  41664 \\
36 \times 8 + 24 \times 24 + 36 \times 72  &  36288 \\
54 \times 8 + 54 \times 56  &  10368 \\
288 \times 12  &  614664 \\
72 \times 12 + 72 \times 36  &  90720 \\
36 \times 12 + 36 \times 84  &  134784 \\
216 \times 16  &  45696 \\
54 \times 16 + 54 \times 48  &  83328 \\
18 \times 16 + 12 \times 48 + 18 \times 144  &  72576 \\
144 \times 24  &  1583136 \\
36 \times 24 + 36 \times 72  &  181440 \\
18 \times 24 + 18 \times 168  &  269568 \\
72 \times 48  &  1395072 \\
18 \times 48 + 18 \times 144  &  362880 \\
\end{array}$$
\caption{Factorization Types for $N=63$} \label{t:63}
\end{table}

Of course, the field $K$ is not defined sufficiently precisely to compute the double coset type for any specific prime $q$.  For this we need more information on the field $L$, which is the topic of the next two sections.

\section{Computing Frobenius}

Let $E$ be an elliptic curve over $\Q$ of conductor $N_E$ and fix an integer $N$.  For a prime $q \nmid NN_E$ we wish to determine the matrix (well defined up to conjugacy) in $\GL_2(\Z/N)$ of a Frobenius element at $q$.  
We use the approach of \cite{DukeToth} which yields a Frobenius matrix
\begin{equation} \label{frobmat}
\mat{\frac{a_q+b_q\delta_q}{2}}{b_q}{\frac{b_q(\Delta_q-\delta_q)}{4}}{\frac{a_q-b_q\delta_q}{2}}
\end{equation}
independent of $N$.  Here
\begin{align*}
a_q &= q+1 - \#E(\Fq) \\
\Delta_q &= \disc \End(E_{\Fq}) \cap \Q(\pi_{E_{\Fq}}) \\
b_q &= \sqrt{\frac{a_q^2-4q}{\Delta_q}} > 0\\
\delta_q &= \begin{cases} 0 & \Delta_q \equiv 0 \pmod{4} \\ 1 & \Delta_q \equiv 1 \pmod{4}
\end{cases}
\end{align*}
Here $\End(E_{\Fq})$ denotes the endomorphism ring
of $E$ over $\Fq$ while $\pi_{E_{\Fq}} : E_{\Fq} \to E_{\Fq}$ denotes the
Frobenius endomorphism.

The Fourier coefficient $a_q$ is of course efficiently computable using the algorithm of Schoof \cite{Schoof} and its refinement \cite{Schoof2} by Atkin and Elkies.   To compute
$\Delta_q$, recall that $\End(E_{\Fq})$ is an order in a CM field (resp.\ quaternion algebra) when
$E$ is ordinary (resp.\ supersingular) at $q$.
The field $\Q(\pi_{E_{\Fq}})$ generated by Frobenius is
either $\Q$ or imaginary quadratic.  In the former case,
the intersection $\End(E_{\Fq}) \cap \Q(\pi_{E_{\Fq}})$
is $\Z$ and we set $\Delta_q=1$.  Otherwise the intersection is an order in the imaginary quadratic field $\Q(\pi_{E_{\Fq}})$ and $\Delta_q$ denotes its discriminant.  

$\Delta_q$ is necessarily divisible by the discriminant of the maximal order of $\Q(\pi_{E_{\Fq}})$.
On the other hand,
$\End(E_{\Fq}) \cap \Q(\pi_{E_q})$ contains
$\Z[\pi_{E_{\Fq}}]$, so that $\Delta_q$ must divide
the discriminant $a_q^2-4q$ of that ring.  One can
determine the precise value of $\Delta_q$ via several different algorithms; see
\cite[Section 4.2]{Kohel},
\cite[Section 5.2]{Sutherland}, \cite{BissonSutherland} and
\cite{Bisson}.  At the present time, we have implemented only the isogeny climbing portion of
\cite[Algorithm 1]{BissonSutherland}, which in Magma only works so long as $N$ is divisible only by primes $\le 59$.  (We note that if $E$ is supersingular at $q$, then $\End(E_{\Fq})$ is necessarily maximal so that $\Delta_q$ equals the discriminant of $\Q(\pi_{E_{\Fq}})$.

\begin{example}
Take $E=X_0(11)$ and $q=8689$.  One finds that $a_q=90$.  We compute
$$a_q^2-4q = -26656 = 14^2 \cdot -136.$$
Thus
$$\Z[14\sqrt{-34}] \subseteq \End(E_{\F_q}) \subseteq \Z[\sqrt{-34}].$$
Isogeny climbing shows that in fact 
$$\End(E_{\F_q}) = \Z[2\sqrt{-34}]$$
so that $\Delta_q = -544$.
Thus $b_q=7$ and $\delta_q=0$, resulting in a Frobenius matrix of
$$\mat{45}{7}{-952}{45}.$$
\end{example}

Assuming $E$ is fixed in the discussion,
we write $\Frob_{q}$ for the conjugacy class of (\ref{frobmat}) in $\GL_2(\Z/N)$ for any $q$ and $N$
with $q \nmid N_E N$.

\section{Dedekind zeta functions} \label{s:dzf}

Recall that the {\it Dedekind zeta function} of a number field $K$ is the Dirichlet series
$$\zeta_K(s) = \sum z_n n^{-s}$$
where
$z_n$ is the number of ideals of $\O_K$ of absolute norm $n$.  The results to this point enable us to compute $z_n$ for the number fields we have been
considering.

Fixing an elliptic curve $E$ and an odd integer $N$,
we write
$$\dcp_N(\Frob_{q})$$
for the double coset type of the conjugacy class of Frobenius at $q$ acting on $E[N]$.  Recall that if $E$ has full Galois image modulo $N=p_1^{n_1}\cdots p_m^{n_m}$, then by Lemma~\ref{lemma:dcpprod} and Proposition~\ref{prop:main} we have that
$$\dcp_N(\Frob_{q}) = \dcp_{p_1^{n_1}}(\Frob_{q}) \otimes \cdots \otimes \dcp_{p_m^{n_m}}(\Frob_{q}).$$

\begin{thm} \label{thm:main2}
Let $E$ be an elliptic curve over $\Q$ and fix an odd $N=p_1^{n_1}\cdots p_m^{n_m}$ such that $E$ has full Galois image modulo $N$.  Let $f_E \in S_2(N_E)$ be the cuspidal newform corresponding
to $E$ with $N_E$ the conductor of $E$.  Let $$\Gamma = \left\{ \mat{1}{b}{0}{a} ; a \in \ZNs, b \in \ZN \right\}$$
and set $K=\Q(E[N])^\Gamma$.  Assume further that:
\begin{enumerate}
\item $E$ is semistable (that is, $N_E$ is squarefree);
\item $E$ has good ordinary reduction at each $p_i$ and there does not exist a companion form to $f_E$ in $S_{p_i-1}(N_E,\F_{p_i})$.
\end{enumerate}
Then the factorization type of a prime $q$ in the ring of integers of $K$ is given as follows.
\begin{description}
\item[$q \nmid NN_E$] $\dcp_N(\Frob_{q})$
\item[$q \mid N_E$] $\dcpm_{p_1^{n_1}}(q\varepsilon,\varepsilon,b_{1,1},b_{2,1}) \otimes \cdots \otimes \dcp_{p_m^{n_m}}(q\varepsilon,\varepsilon,b_{1,m},b_{2,m})$
where:
\begin{itemize}
\item $\varepsilon=1$ (resp.\ $-1$) if $E$ has split (resp.\ non-split) multiplicative reduction at $q$;
\item $p^{b_{1,i}}$ equals the order of the quotient of $\Q_{q}^\times/\Q_q^{\times p_i^{n_i}}$ by the
multiplicative Tate period $\vartheta \in \Q_q^\times$; 
\item $b_{2,i}$ equals $-v_{p_i}(v_q(j_E))$ with $j_E$ the $j$-invariant of $E$.
\end{itemize}
\item[$p_i \mid N$] $\dcp_{\frac{N}{p_i^{n_i}}}(\Frob_{q}) \otimes \dcpo_{p_i^{n_i}}(\alpha_i)$
where $\alpha_i \in (\Z/p_i^{n_i})^\times$ is the unit root of $x^2-a_{p_i}x+p_i$.
\end{description}
\end{thm}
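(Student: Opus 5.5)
The strategy is to reduce everything to Lemma~\ref{lemma:factor} applied to $L=\Q(E[N])$, $G=\Gal(L/\Q)\cong\GL_2(\ZN)$ (full image), and $K=L^\Gamma$. For each rational prime $q$ we identify, up to conjugacy (which is harmless by Lemma~\ref{l:myst}), the decomposition and inertia groups $I\subseteq D\subseteq G$ of a prime of $L$ above $q$. We then read off the double coset type of the pair $(D,I)$ from Sections~\ref{s:dcp}--\ref{s:ram}. Since $G=\prod_i\GL_2(\Z/p_i^{n_i})$ and $\Gamma=\prod\Gamma_i$, Lemma~\ref{lemma:dcpprod} lets us work one prime power at a time. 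We take the $p_i$-components of $D$ and $I$, namely their images in $\GL_2(\Z/p_i^{n_i})$, i.e.\ the local Galois images on $E[p_i^{n_i}]$, and tensor the resulting types together.

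\textbf{Case $q\nmid NN_E$.} Here $q$ is unramified in $L$ by N\'eron--Ogg--Shafarevich, so $I=1$ and $D=\langle\Frob_q\rangle$. The type is $\dcp_N(\Frob_q)$ by Proposition~\ref{prop:main}, tensored over the $p_i$ as in the displayed formula.

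\textbf{Case $q\mid N_E$.} Semistability makes the reduction multiplicative, and we use the Tate curve. Over $\Q_q$ (after twisting by the unramified quadratic character $\varepsilon$ in the nonsplit case), $E[p^n]$ sits in the exact sequence $0\to\mu_{p^n}\to E[p^n]\to\Z/p^n\to0$. Here $\Z/p^n$ carries the twist by $\varepsilon$ and the extension class is the Kummer class of the Tate period $\vartheta$. In a suitable basis, Galois acts by $\mat{\chi\varepsilon}{*}{0}{\varepsilon}$.
\begin{itemize}
\item The Frobenius-type generator has diagonal $(q\varepsilon,\varepsilon)$ on the tame part.
\item The upper-right entries fill out $p^{b_1}\Z/p^n$, where $p^{b_1}$ records how far $\vartheta$ is from being a $p^n$-th power in $\Q_q^\times/\Q_q^{\times p^n}$.
\item Inertia (tame, pro-$p$ part) acts through $q^{v_q(\vartheta)\Z_p}$ on the extension class. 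Since $v_q(\vartheta)=-v_q(j_E)$, $I$ is the unipotent group with entries in $p^{b_2}\Z/p^n$, with $b_2=\min(n,v_p(v_q(j_E)))$ as in the statement.
\end{itemize}
Since $p$ is odd and $q\neq p$, the cyclotomic part is unramified. One must check that $D$ is exactly the group in Section~\ref{s:ram} with $\alpha=q\varepsilon$, i.e.\ that the generator of $D/I$ can be conjugated to $\mat{\alpha}{0}{0}{\varepsilon}$ modulo the unipotent part. This follows because $D$ is generated by the unipotent subgroup and one lift of Frobenius, whose diagonal is $(q\varepsilon,\varepsilon)$. Proposition~\ref{prop:dcpram} then gives $\dcpm$. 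At the other $p_j$ the same argument applies, which explains why the multiplicative type appears at every factor.

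\textbf{Case $q=p_i$.} At the primes $p_j\neq p_i$, $q$ is a prime of good reduction, so the $p_j$-components are unramified with Frobenius $\Frob_q$, giving $\dcp_{N/p_i^{n_i}}(\Frob_q)$. Strictly, $D$ is not a product, but Lemma~\ref{lemma:dcpprod} needs only the projections. At $p_i$, good ordinary reduction gives a local sequence $0\to E[p^n]^\circ\to E[p^n]\to\text{\'etale}\to0$ with action $\mat{\chi\,\psi^{-1}}{*}{0}{\psi}$, where $\psi$ is unramified with $\psi(\Frob)=\alpha$, the unit root. The image of $D$ is then contained in the group of Section~\ref{s:ram} (good ordinary case). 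The cyclotomic character on $\Gal(\Q_p(\mu_{p^n})/\Q_p)$ surjects onto $\Zpns$ and is totally ramified, so it gives the diagonal part of $I$.

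\textbf{Main obstacle.} The hardest step is showing that the upper-right entries are as large as possible: all of $\Zpn$ on inertia, i.e.\ the extension is ``maximally non-split''. This is exactly where the no-companion-form hypothesis is used. By Gross's theorem (Serre's conjecture on companion forms), $E[p]$ is wildly ramified, i.e.\ not split on inertia, precisely when no companion form exists. Wild ramification mod $p$ gives a nontrivial unipotent element in the image of $I$. Since the image of $I$ mod $p^n$ is normalized by the diagonal torus, which acts on the unipotent part through a nontrivial character, a Nakayama/lifting argument upgrades a nonzero class mod $p$ to the full $\Zpn$. With $D$ and $I$ identified, Proposition~\ref{prop:dcpord} supplies $\dcpo(\alpha_i)$, and tensoring completes the proof.
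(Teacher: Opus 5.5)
Your local analysis matches the paper's: Tate curve and Kummer theory at $q\mid N_E$, and the ordinary filtration plus Gross's tameness criterion at $p_i$. Your commutator argument for the unipotent part of inertia (combine a lift of a wild unipotent element with an inertia element whose diagonal is $(\zeta,1)$, $\zeta\not\equiv 1 \pmod p$) is more careful than the paper's one-line ``it follows that $\nu$ is surjective''.

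The genuine gap is the assembly step for ramified $q$, where you say Lemma~\ref{lemma:dcpprod} ``needs only the projections''. It does not. Its proof asserts that the $D$-orbit of $(w_1,w_2)$ has size $\LCM(b_1,b_2)$. That holds when $D=\langle (g_1,g_2)\rangle$ is cyclic, which is why your unramified case is fine. For non-cyclic $D$ the orbit size depends on how the projections are glued; for $D=D_1\times D_2$, for instance, it is $b_1b_2$. At $q\mid N_E$ and at $q=p_i$ the decomposition group is non-cyclic and a proper fibered product of its projections. The paper's proof has the same gap, since it also combines the prime-power pieces through this lemma.

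Concretely, take the paper's own example $E=X_0(11)$, $N=63$, $q=11$. Since $v_{11}(\vartheta)=5$ is prime to $63$, the decomposition group is $D=\{\mat{11^i}{b}{0}{1} : i\in\Z/6,\ b\in\Z/63\}$, of order $378$ and index $3$ in $D_9\times D_7$, and $I=\{\mat{1}{b}{0}{1}\}$.

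Take $w=\bigl(\vec{0}{1},\vec{1}{0}\bigr)\in W_9\times W_7$. The projected orbits have sizes $9$ and $3$. But the stabilizer of $w$ in $D$ is $\{i\in\{0,3\},\ b\equiv 0 \bmod 9\}$, of order $14$, so $\#Dw=27$ and $\#Iw=9$. That gives residue degree $3$, as it must, since the $7$-part of this point lies in $\mu_7$. The tensor formula instead gives $(9,9)_9\otimes(3,1)_7=(9,9)$, i.e.\ residue degree $1$.

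Counting directly, a prime of norm $11$ corresponds to $w=\vec{x}{y}$ with $\mat{11}{0}{0}{1}w\in Iw$, i.e.\ $10x\in y\,\Z/63$. This forces $y\in(\Z/63)^\times$, giving $36$ primes of norm $11$, not the $72$ that the tensor formula (and Table~\ref{t:dzf}) gives. The same phenomenon occurs at $q=p_i$: inertia fills the $p_i$-component while acting trivially on the others, so orbit sizes multiply rather than taking an LCM.

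So this step cannot be repaired by citing the lemma. The tensor-product description of the ramified cases is justified only for prime-power $N$. For general $N$ you must count orbits of the actual fibered product, for example by redoing the stabilizer computation of Lemma~\ref{l:stab} with the Frobenius exponent $i$ shared across components.

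A smaller point: knowing the diagonal of the Frobenius lift does not put the $p$-primary $D$ into the standard form of Section~\ref{s:ram}. Conjugating by $\mat{1}{t}{0}{1}$ changes the upper-right entry only by $(1-q)\vep t$, which is divisible by $p$ when $q\equiv 1\pmod p$. For example, with $p^n=9$, $q=7$ and $\vartheta=7^9u$, $u\equiv 2\pmod 7$, $D$ is unramified, hence cyclic, of order $9$. The only order-$9$ group of that shape with $\alpha=7$, $\vep=1$ is $\{\mat{7^i}{3j}{0}{1}\}\cong(\Z/3)^2$, so the two are not conjugate.
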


\begin{rmk} So long as all prime divisors of $N$ are at most 59, every double coset type
in Theorem~\ref{thm:main2} can be computed readily, allowing for the computation of the Dedekind zeta function for large non-Galois number fields.
\end{rmk}

\begin{proof} Given Lemma~\ref{lemma:factor}, all that we need check is that the decomposition and inertia groups in the case of multiplicative reduction and good ordinary reduction (without a companion form) are of the form given in Section~\ref{s:ram}.  Assume first that $E$ has split multiplicative reduction at $q$ and fix
some prime power divisor $p^n$ of $N$.  Then using the parameterization of $E_{\Q_q}$ by the Tate curve, we have that
$$D = \Gal\bigl(\Q_q(\zeta_{p^n},\vartheta^{\frac{1}{p^n}})/\Q_q\bigr)$$
with $\zeta_{p^n}$ a primitive $p^{n\text{th}}$ root of unity and $\vartheta \in \Q_q^\times$ the multiplicative Tate period of $E$ over $\Q_q$.
This Galois group can be realized as a subgroup of the semi-direct product
$$\Zpn \rtimes \Zpns \cong \left\{ \mat{a}{b}{0}{1} ; a \in \Zpns, b \in \Zpn \right\} \subseteq \GL_2(\Zpn).$$
The image of $D$ under projection to $\Zpns$ must be $\left< q \right>$ (corresponding to Frobenius), while the subgroup of
$\Zpn$ has order equal to the order of the subgroup of $\Q_q^\times/\Q_q^{\times p^n}$ generated by $\vartheta$.  This confirms the description of 
$D$ in this case.

Since $\Q_q(\zeta_{p^n})/\Q_q$ is unramified, the inertia group $I$ is also the inertia group of
$$\Gal\bigl(\Q_q(\zeta_{p^n},\vartheta^{\frac{1}{p^n}})/\Q_q(\zeta_{p^n})\bigr).$$
By Kummer theory, this corresponds to the image of $\vartheta$ under the valuation map
$$\Q_q^\times/\Q_q^{\times p^n} \cong \F_q^\times/\F_q^{\times p^n} \times \Z/p^n \to \Z/p^n.$$
As $v(\vartheta) = -v(j_E)$, this confirms the description of $I$ in this case.

The non-split multiplicative case is similar, with the matrix of Frobenius now given by $\mat{-q}{*}{0}{-1}$.

When $E$ has good ordinary reduction at $p$, the integral $p$-adic Galois representation is
given by
$$\rho_{E,p}|_{G_{\Q_p}} = \mat{\chi\psi}{\nu}{0}{\psi^{-1}}$$
where $\chi$ is the cyclotomic character; $\psi$ is the unramified character sending Frobenius to
the unit root of $x^2-a_px+p$; and $\nu \in H^1\bigl(\Q_p,\Z_p(\chi\psi^2)\bigr)$.  By \cite{Gross}, the assumption that $f_E$ has no companion form in
$S_{p-1}(N_E,\Fp)$ guarantees that $\nu$ is non-vanishing modulo $p$.  It follows that $\nu$ is surjective; it is also ramified since $\Zp(\chi\psi^2)$ has trivial inertia invariants.  Since $\chi$ is also ramified and surjective, this shows that the decomposition and inertia groups are as asserted.

\end{proof}

Passing from the factorization type to the coefficients $z_n$ of $\zeta_K(s)$ is straightforward.  
To compute $z_{p^n}$ for a prime power $p^n$ one need only know the factorization type 
$$a_1 \times (b_1,c_1) + \cdots + a_m \times (b_m,c_m)$$
of $p\O_K$.  We then know that there are precisely $a_i$ prime ideals of $\O_K$ of
absolute norm $p^\frac{b_i}{c_i}$.  The Euler factor at $p$ of $\zeta_K$ is thus
$$\prod_{i=1}^{m} \frac{1}{\left(1-p^{-\frac{b_i}{c_i}s}\right)^{a_i}}.$$
Multiplying out this finite product yields $z_{p^n}$ for all powers of $p$, and multiplicativity
$$z_{mn}=z_{m}z_{n} \text{~for~} (m,n)=1$$
yields the remainder of the coefficients.

\begin{example}  We return to the example of the introduction: let $E=X_0(11)$ and $N=63$. 
According to \cite{LMFDB}, $E$ has full Galois image modulo $N$ for all $N$ relatively prime to 
$110$.  One checks easily that $f_E$ has no companion form in
$S_2(11,\F_3)$ or $S_4(11,\F_5)$.  $E$ has split multiplicative reduction at $11$, with
Tate period
$$\vartheta = 268452333237063282944 \cdot 11^5 + O(11^{25}) \in \Q_{11}.$$
Since $(v_{11}(\vartheta),N)=1$, one sees immediately that all images of $\vartheta$ have maximal order modulo $9$ and $7$; thus the factorization of $11$ in $K$ is
\begin{multline*}
\dcpm_9(11,1,0,0) \otimes \dcpm_7(11,1,0,0) = 6 \times (6,1) + 12 \times (6,3) + \\ 36 \times (9,9) + 6 \times (42,7) + 12 \times (42,21) + 36 \times (63,63).
\end{multline*}
In particular, the Euler factor of $\zeta_K(s)$ at $11$ is
$$\frac{1}{(1-11^{-s})^{72}} \cdot \frac{1}{(1-11^{-2s})^{24}} \cdot \frac{1}{(1-11^{-6s})^{12}}.$$

$E$ has good ordinary reduction at both $3$ and $7$, resulting in double coset types 
$$\dcp_7(\Frob_{3}) \otimes \dcpo_9(2) = 18 \times (48,6) + 6 \times (432,9)$$
and
$$\dcp_9(\Frob_{7}) \otimes \dcpo_7(5) =  18 \times (6,6) + 18 \times (18,6) + 18 \times (42,7) + 18 \times (126,7).$$
The corresponding Euler factors are
$$\frac{1}{(1-3^{-8s})^{18}} \cdot \frac{1}{(1-3^{-48s})^{6}}$$
and
$$\frac{1}{(1-7^{-s})^{18}} \cdot \frac{1}{(1-7^{-3s})^{18}} \cdot 
\frac{1}{(1-7^{-6s})^{18}} \cdot \frac{1}{(1-7^{-18s})^{18}}.$$

All other Euler factors are easily computed using unramified double coset types.  For example,
$\Frob_{2} = \mat{-1}{1}{-1}{-1}$, so that $\dcp_{63}(\Frob_{2}) = 144 \times 24$, resulting in an
Euler factor of
$$\frac{1}{(1-2^{-24s})^{144}}.$$
The first nonzero coefficient of $\zeta_K(s)$ coming from an unramified prime is $c_{313}$:
$\Frob_{313} = \mat{0}{1}{-313}{-1}$ lies in
conjugacy class $$\text{II}(4,0)_9 \times \text{III}(1,5)_7,$$ yielding a double coset type of
$$\dcp_{63}(\Frob_{313}) = 36 \times 1 + 24 \times 3 + 126 \times 6 + 36 \times 9 + 126 \times 18.$$
\end{example}

\section{Elliptic Curves}

Let $E$ be an elliptic curve over $\Q$ and fix a prime $p$.  Since every element of $E(\Fpbar)$ is torsion and every characteristic $p$ torsion point of $E$ is defined over $\Fpbar$, we have
\begin{equation} \label{eq:silly}
E(\Fpbar) \cong \begin{cases} \Qp/\Zp \times \prod_{\ell \neq p} (\Ql/\Zl)^2 & E \text{~$p$-ordinary;} \\
\prod_{\ell \neq p} (\Ql/\Zl)^2 & E \text{~$p$-supersingular.} \end{cases}
\end{equation}
Consider now a second prime $q$.  Writing $A^{pq}$ for the prime-to-$pq$ part of an abelian group $A$, we see from (\ref{eq:silly}) that
$$E(\Fpbar)^{pq} \cong E(\Fqbar)^{pq}$$
as abelian groups. 

In fact, this isomorphism can be realized somewhat more conceptually.  Let $M$ be the extension of $\Q$ generated by all torsion points of $E$.  Fix embeddings
$\Qbar \inj \Qbar_p$ and $\Qbar \inj \Qbar_q$.
Since the corresponding reduction map
$$E(M)^{p}_{\tors} \to E(\Fpbar)^{p}$$
is injective, it is in fact an isomorphism.  Composing the inverse isomorphism with the reduction map
$$E(M) \to E(\Fqbar)$$
we obtain a homomorphism
$$h: E(\Fpbar)^p \to E(\Fqbar)$$
which is an isomorphism on prime-to-$pq$ parts.  

In this section we consider the following question: what are the possible $d$ such that
$h(E(\Fp)) \subseteq E(\Fqbar)$ is defined over $\F_{q^d}$?  (Obviously this depends on the choice of primes above
$p$ and $q$.)

Let us offer an approach to this question without reference to infinite degree extensions.  Write
$$E(\Fp) \cong \Z/N \times \Z/N'$$
with $N' \mid N$.  Assume that $p \nmid N$.  Set $L=\Q(E[N])$.  Fix a prime $\p$ of $L$ above $p$.  As above, the reduction map
$$E(L)_{\tors} \to E(\O_L/\p)$$
is injective.  Since by the choice of $N$ we know that $E(\Fp)$ must lie in the image, we may identify it with a subgroup $C_\p \subseteq E[N](L)$.  Let $\Gamma_\p \subseteq \Gal(L/\Q)$ denote the stabilizer of $C_\p$.  We define a field
$K$ as the fixed field of $L$ by $\Gamma_\p$.

Let $q$ be a second prime with prime factorization
$$q \O_K = \q_1 \cdots \q_r.$$
Then each composition
$$E(\Fp) \cong C_\p \subseteq E(K) \to E(\O_K/\q_i)$$
realizes $E(\Fp)$ inside of $E(\Fqbar)$, and our earlier question can be rephrased as asking what the possible residual degrees of the $\q_i$ are.  

Under the assumptions:
\begin{itemize}
\item $\Gal(L/\Q) \cong \GL_2(\Z/N)$;
\item $E(\Fp)$ cyclic (that is, $N'=1$);
\item $N$ odd;
\end{itemize}
this is precisely the question considered in this paper.  We can approach it either from the statistical point of view of Section~\ref{sec:dist} or the explicit point of view of 
Theorem~\ref{thm:main2}.  As usual, let us illustrate with an example.

\begin{example}
Consider the elliptic curve $X_0^+(37)$ with Weierstrass equation $$y^2+y=x^3-x$$ of conductor 37.  Take $p=4391$.  One computes that $E(\Fp) \cong \Z/4425$ where $4425 = 3 \cdot 5^2 \cdot 59$.  The extension $\Q(E[4425])^{\Gamma}$ has degree 16704000.  There are 668 distinct factorization types for this extension which we will not list here.   We instead focus on the minimal residual degree in each factorization, which range from $1$ to $3480$; see Table~\ref{t4425}.

\begin{table}
$$\begin{array}{|ll|ll|ll|} \hline
\text{Degree} & \text{Proportion} & \text{Degree} & \text{Proportion} & \text{Degree} & \text{Proportion} \\ \hline
1	&	0.04\%	&	24	&	0.06\%	&	232	&	2.58\%	\\
2	&	0.16\%	&	29	&	0.74\%	&	290	&	6.56\%	\\
3	&	0.00\%	&	30	&	0.21\%	&	348	&	0.49\%	\\
4	&	0.26\%	&	40	&	0.86\%	&	435	&	1.12\%	\\
5	&	0.13\%	&	58	&	2.03\%	&	580	&	8.28\%	\\
6	&	0.01\%	&	60	&	0.49\%	&	696	&	1.23\%	\\
8	&	0.17\%	&	87	&	0.04\%	&	870	&	3.80\%	\\
10	&	0.51\%	&	116	&	3.63\%	&	1160	&	15.31\%	\\
12	&	0.02\%	&	120	&	1.63\%	&	1740	&	10.28\%	\\
15	&	0.05\%	&	145	&	2.67\%	&	3480	&	35.97\%	\\
20	&	0.52\%	&	174	&	0.15\%	\\ \hline
\end{array}$$
\caption{Minimal residual degrees for $N=4425$} \label{t4425}
\end{table}

For a specific example, take $q=73$.  Then 
$$\Frob_{q} = \mat{0}{1}{-73}{-1}$$
with double coset type
\begin{multline*}
80 \times 29 + 2360 \times 58 + 80 \times 87 + 4800 \times 116 + \\ 2360 \times 174 + 4800 \times 348 + 6000 \times 580 + 6000 \times 1740.
\end{multline*}
That is, the least $d$ such that $E(\Fp) \subseteq E(\F_{q^d})$ is $d=29$, but much more likely is that $d$ could be as large as 1740.  

The least $q$ such that one can have $d=1$ is $q=4391$ (which not surprisingly is close to $4425$):
$$\Frob_{q,4425} = \mat{-16}{1}{-4119}{-17}$$
with double coset type
\begin{multline*}
2320 \times 1 + 3480 \times 2 + 1856 \times 5 + 2784 \times 10 + 2320 \times 25 + 4720 \times 29 + 3480 \times 50 + \\
7080 \times 58 + 3776 \times 145 + 5664 \times 290 + 4720 \times 725 + 7080 \times 1450.
\end{multline*}

\end{example}

\section{Software implementation}

Most of the results of this paper are very difficult to apply by hand.  For this reason we provide a collection of Magma programs which implement the results of this paper.  The programs are available for download.  Here we only discuss the highlights.

\begin{description}
\item[\texttt{StdDCP(p,n,o,a,b)}] Computes the double coset type $\dcp(\texttt{o};\texttt{a},\texttt{b})$ over $\Z/\texttt{p}^\texttt{n}$ as in Table~\ref{t:dcp}.
\item[\texttt{StdDCPP(p,n,ka,kb,a,b)}] Computes the double coset type $\dcp(\texttt{ka},\texttt{kb};\texttt{a},\texttt{b})$ over $\Z/\texttt{p}^\texttt{n}$ as in Table~\ref{t:dcp}.
\item[\texttt{DCP(p,n,g)}] Computes the double coset type of the matrix $\texttt{g} \in \GL_2(\Z/\texttt{p}^\texttt{n})$.
\item[\texttt{DCPN(N,g)}] Computes the double coset type of the matrix $\texttt{g} \in \GL_2(\Z/\texttt{N})$.
\item[\texttt{MultDCP(p,n,g,h)}] Computes the double coset type $\dcpm_{\texttt{p}^\texttt{n}}(\texttt{g}[1],\texttt{g}[4],\texttt{g}[2],\texttt{h}[2])$
\item[\texttt{OrdDCP(p,n,a)}] Computes the double coset type $\dcpo_{\texttt{p}^\texttt{n}}(\texttt{a})$
\item[\texttt{Frob(E,q)}] Computes the Duke--Toth Frobenius matrix for the prime $\texttt{q}$ acting on the elliptic curve $\texttt{E}$
\item[\texttt{CountIdealNorm(E,N,A,B)}] Computes the coefficients $c_{\texttt{A}},\ldots,c_{\texttt{B}}$ of the Dedekind zeta function of
the field $\Q(\texttt{E}[\texttt{N}])^{\Gamma}$ as in Theorem~\ref{thm:main2}.

\end{description}

\end{document}